\documentclass[10pt, a4paper]{amsart}
\usepackage{amscd,amsmath,amssymb,amsfonts,amsthm,ascmac}
\usepackage{enumerate,mathrsfs,stmaryrd,latexsym, comment, mathtools, mathdots} 
\usepackage[all]{xy}
\usepackage[top=27truemm,bottom=21truemm,left=26truemm,right=26truemm]{geometry}
\usepackage{graphicx}
\def\r{\mathbb{R}}
\def\c{\mathbb{C}}
\def\q{\mathbb{Q}}
\def\z{\mathbb{Z}}

\newtheorem{thm}{Theorem}[section]
\newtheorem{defi}[thm]{Definition}
\newtheorem{rem}[thm]{Remark}
\newtheorem{prop}[thm]{Proposition}
\newtheorem{ex}[thm]{Example}
\newtheorem{cor}[thm]{Corollary}
\newtheorem{lem}[thm]{Lemma}

\newtheorem*{ack}{Acknowledgments}

\newtheorem*{theor1}{Theorem 1}
\newtheorem*{theor2}{Theorem 2}
\newtheorem*{exam3}{Example 3}
\newtheorem*{coro4}{Corollary 4}

%%%%%
 \makeatletter
    
    \@addtoreset{equation}{section}
  \makeatother%数式番号にsection number
%%%%%
\title[Polyhedral realizations of crystal bases and convex-geometric Demazure operators]{\fontsize{11pt}{11pt}\selectfont Polyhedral realizations of crystal bases and convex-geometric Demazure operators}
\date{\today}
\author[N. Fujita]{\fontsize{10pt}{10pt}\selectfont Naoki Fujita}
\begin{document}
\address{Department of Mathematics, Tokyo Institute of Technology, 2-12-1 Oh-okayama, Meguro-ku, Tokyo 152-8551, Japan}
\email{fujita.n.ac@m.titech.ac.jp}
\subjclass[2010]{Primary 05E10; Secondary 14M15, 14M25, 52B20}
\keywords{Nakashima-Zelevinsky's polyhedral realization, Crystal basis, Demazure operator, Toric degeneration}
\thanks{The work was partially supported by Grant-in-Aid for JSPS Fellows (No.\ 16J00420)}
\begin{abstract}
The main object in this paper is a certain rational convex polytope whose lattice points give a polyhedral realization of a highest weight crystal basis. This is also identical to a Newton-Okounkov body of a flag variety, and it gives a toric degeneration. In this paper, we prove that a specific class of this polytope is given by Kiritchenko's Demazure operators on polytopes. This implies that polytopes in this class are all lattice polytopes. As an application, we give a sufficient condition for the corresponding toric variety to be Gorenstein Fano.
\end{abstract}
\maketitle
\setcounter{tocdepth}{1}
\tableofcontents
\section{Introduction}

The theory of crystal bases \cite{Kas1, Kas2} gives a combinatorial skeleton of a representation of a semisimple Lie algebra. In the theory of crystal bases, it is important to give their concrete realizations. Until now, many useful realizations have been discovered; Nakashima-Zelevinsky's polyhedral realization \cite{Nak1, NZ} is one of them, which realizes a highest weight crystal basis as the set of lattice points in some rational convex polytope. This polytope is called a Nakashima-Zelevinsky polytope. The author and Naito \cite{FN} proved that the Nakashima-Zelevinsky polytope is identical to a Newton-Okounkov body of a flag variety. The theory of Newton-Okounkov bodies was introduced by Okounkov \cite{Oko1, Oko2, Oko3}, and afterward developed independently by Kaveh-Khovanskii \cite{KK1, KK2} and by Lazarsfeld-Mustata \cite{LM}. A remarkable fact is that the theory of Newton-Okounkov bodies gives a systematic method of constructing toric degenerations \cite[Theorem 1]{And}; in particular, there exists a flat degeneration of the flag variety to the normal toric variety associated with the Nakashima-Zelevinsky polytope. In this paper, we relate Nakashima-Zelevinsky polytopes with Demazure operators on polytopes.

To be more precise, let $\mathfrak{g}$ be a semisimple Lie algebra, $P_+$ the set of dominant integral weights, $I = \{1, \ldots, n\}$ an index set for the vertices of the Dynkin diagram, and $\{\alpha_i \mid i \in I\}$ the set of simple roots. For $\lambda \in P_+$, we denote by $V(\lambda)$ the irreducible highest weight $\mathfrak{g}$-module with highest weight $\lambda$, and by $\mathcal{B}(\lambda)$ the crystal basis for $V(\lambda)$. Fix a reduced word ${\bf i} = (i_1, \ldots, i_N) \in I^N$ for the longest element $w_0$ in the Weyl group. We associate to ${\bf i}$ a specific parametrization $\Psi_{\bf i} \colon \mathcal{B}(\lambda) \hookrightarrow \z^N$ of $\mathcal{B}(\lambda)$, which gives an explicit description of the crystal structure; see Section 3 for the precise definition. Nakashima-Zelevinsky \cite{NZ} and Nakashima \cite{Nak1} described explicitly the image $\Psi_{\bf i} (\mathcal{B}(\lambda))$ under some technical assumptions on ${\bf i}$. The author and Naito \cite{FN} proved that the image $\Psi_{\bf i} (\mathcal{B}(\lambda))$ is identical to the set of lattice points in some rational convex polytope $\Delta_{\bf i} (\lambda)$ without any assumptions on ${\bf i}$. We call $\Delta_{\bf i} (\lambda)$ the Nakashima-Zelevinsky polytope associated with ${\bf i}$ and $\lambda$.

The theory of Demazure operators on polytopes was introduced by Kiritchenko \cite{Kir1} to construct a (possibly virtual) convex polytope, whose lattice points yield the character of $V(\lambda)$. For instance, Gelfand-Zetlin polytopes \cite{GZ} and Grossberg-Karshon's twisted cubes \cite{GK} are obtained in a uniform way (see \cite{Kir1}). For $i \in I$ and $1 \le k \le N$ with $i_k = i$, let $D_i ^{(k)}$ denote the corresponding Demazure operator on polytopes; see Section 2 for the precise definition. This operator is defined for a specific class of polytopes, called parapolytopes. Our purpose is to compute $D_{i_N} ^{(N)} \cdots D_{i_1} ^{(1)} ({\bf a})$ for specific ${\bf a} \in \r^N$. Note that $D_{i_N} ^{(N)} \cdots D_{i_1} ^{(1)} ({\bf a})$ is not necessarily well-defined as we will see in Example \ref{e:counter example}. For $i \in I$, we denote by $d_i$ the number of $1 \le k \le N$ such that $i_k = i$. For $\lambda \in P_+$, we write $\lambda = \sum_{i \in I} \hat{\lambda}_i d_i \alpha_i$, and set 
\begin{align*}
{\bf a}_\lambda \coloneqq -\Psi_{\bf i}(b_{w_0 \lambda}) + (\hat{\lambda}_{i_1}, \ldots, \hat{\lambda}_{i_N}), 
\end{align*}
where $b_{w_0 \lambda} \in \mathcal{B}(\lambda)$ is the lowest weight element. For subsets $X, Y \subset \r^N$, we define $X + Y$ to be the Minkowski sum: \[X + Y \coloneqq \{x + y \mid x \in X,\ y \in Y\}.\] The following are the main results of this paper.

\vspace{2mm}\begin{theor1}[{Theorem \ref{t:main result}}]
Let ${\bf i} = (i_1, \ldots, i_N) \in I^N$ be a reduced word for $w_0$, and $\lambda \in P_+$. Assume that the Nakashima-Zelevinsky polytope $\Delta_{\bf i}(\lambda)$ is a parapolytope.
\begin{enumerate}
\item[{\rm (1)}] The polytope $\Delta_{\bf i}(\lambda)$ is a lattice polytope.
\item[{\rm (2)}] The polytope $D_{i_N} ^{(N)} \cdots D_{i_1} ^{(1)} ({\bf a}_\lambda)$ is well-defined.
\item[{\rm (3)}] The following equality holds$:$ \[D_{i_N} ^{(N)} \cdots D_{i_1} ^{(1)} ({\bf a}_\lambda) = -\Delta_{\bf i}(\lambda) + (\hat{\lambda}_{i_1}, \ldots, \hat{\lambda}_{i_N}).\] 
\end{enumerate}
\end{theor1}

\vspace{2mm}\begin{theor2}[{Theorem \ref{t:corollary 1}}]
Let ${\bf i} \in I^N$ be a reduced word for $w_0$, and $\lambda, \mu \in P_+$. Assume that the polytopes $\Delta_{\bf i}(\lambda), \Delta_{\bf i}(\mu)$, and $\Delta_{\bf i}(\lambda + \mu)$ are all parapolytopes. Then, the following equalities hold$:$ 
\begin{align*}
&\Psi_{\bf i}(\mathcal{B}(\lambda + \mu)) = \Psi_{\bf i}(\mathcal{B}(\lambda)) + \Psi_{\bf i}(\mathcal{B}(\mu)),\ {\it and}\\
&\Delta_{\bf i}(\lambda + \mu) = \Delta_{\bf i}(\lambda) + \Delta_{\bf i}(\mu).
\end{align*}
\end{theor2}\vspace{2mm}

We give some examples of $\Delta_{\bf i}(\lambda)$ which are parapolytopes. 

\vspace{2mm}\begin{exam3}[{Examples \ref{e:main type A}, \ref{e:main type BCD}, \ref{e:main type G}}]\normalfont
The Nakashima-Zelevinsky polytope $\Delta_{\bf i} (\lambda)$ is a parapolytope for all $\lambda \in P_+$ if
\begin{enumerate}
\item[{\rm (i)}] $\mathfrak{g}$ is of type $A_n$, and ${\bf i} = (1, 2, 1, 3, 2, 1, \ldots, n, n-1, \ldots, 1)$;
\item[{\rm (ii)}] $\mathfrak{g}$ is of type $B_n$ or $C_n$, and ${\bf i} = (n, n-1, \ldots, 1, n, n-1, \ldots, 1, \ldots, n, n-1, \ldots, 1) \in I^{n^2}$;
\item[{\rm (iii)}] $\mathfrak{g}$ is of type $D_n$, and ${\bf i} = (n, n-1, \ldots, 1, n, n-1, \ldots, 1, \ldots, n, n-1, \ldots, 1) \in I^{n (n-1)}$;
\item[{\rm (iv)}] $\mathfrak{g}$ is of type $G_2$, and ${\bf i} = (1, 2, 1, 2, 1, 2)$ or ${\bf i} = (2, 1, 2, 1, 2, 1)$.
\end{enumerate}
\end{exam3}\vspace{2mm}

Let $G/B$ be the full flag variety associated with $\mathfrak{g}$, and $X(\Delta_{\bf i}(\lambda))$ the normal toric variety associated with the rational convex polytope $\Delta_{\bf i}(\lambda)$. Then, we obtain a flat degeneration of $G/B$ to $X(\Delta_{\bf i}(\lambda))$ by the theory of Newton-Okounkov bodies \cite{And}; such a degeneration to a toric variety is called a toric degeneration. Toric degenerations of $G/B$ have been studied from various points of view such as standard monomial theory \cite{Chi, GL}, string parametrizations of dual canonical bases \cite{AB, Cal}, Newton-Okounkov bodies \cite{FaFL, FeFL, Kav, Kir}, and so on; see \cite{FaFL2} for a survey on this topic. Let $P_{++} \subset P_+$ denote the set of regular dominant integral weights. In this paper, we apply Alexeev-Brion's argument \cite{AB} to $\Delta_{\bf i}(\lambda)$, which implies that the toric varieties $X(\Delta_{\bf i}(\lambda))$, $\lambda \in P_{++}$, are all identical and Gorenstein Fano if 
\begin{enumerate}
\item[{\rm (i)}] $\Delta_{\bf i}(\lambda + \mu) = \Delta_{\bf i}(\lambda) + \Delta_{\bf i}(\mu)$ for all $\lambda, \mu \in P_+$;
\item[{\rm (ii)}] the polytope $\Delta_{\bf i}(2\rho)$ is a lattice polytope,
\end{enumerate}
where $\rho$ is the half sum of the positive roots. Hence we obtain the following by Theorems 1, 2.

\vspace{2mm}\begin{coro4}
Take $\mathfrak{g}$ and ${\bf i}$ as in Example {\rm 3}. Then, the toric varieties $X(\Delta_{\bf i}(\lambda))$, $\lambda \in P_{++}$, are all identical and Gorenstein Fano.
\end{coro4}\vspace{2mm}

If $\mathfrak{g}$ is of type $A_n$, and ${\bf i} = (1, 2, 1, 3, 2, 1, \ldots, n, n-1, \ldots, 1)$, then the Nakashima-Zelevinsky polytope $\Delta_{\bf i} (\lambda)$ is identical to the corresponding Gelfand-Zetlin polytope (see Example \ref{e:GZ}). Hence in this case, Theorems 1, 2 and Corollary 4 are not new (see \cite{AB, Kir1}).

In addition, we mention that a relation between convex-geometric Demazure operators and the additivity with respect to the Minkowski sum is discussed in \cite{Kir3}.

This paper is organized as follows. In Section 2, we recall the definition of Kiritchenko's Demazure operators on polytopes. In Section 3, we review some basic facts about crystal bases and their polyhedral realizations. In Section 4, we prove Theorems 1, 2 above. In Section 5, we study the crystal structure on the set of lattice points in $\Delta_{\bf i} (\lambda)$. Section 6 is devoted to some applications to toric varieties associated with Nakashima-Zelevinsky polytopes; in particular, we show Corollary 4 above.

\vspace{2mm}\begin{ack}\normalfont
The author is greatly indebted to Satoshi Naito for numerous helpful suggestions and fruitful discussions. The author would also like to express his gratitude to Dave Anderson and Valentina Kiritchenko for useful comments and suggestions. At the conference ``Algebraic Analysis and Representation Theory'' in June 2017, the author gave a poster presentation on the result of this paper. But there was a gap in the proof at that time, and the condition of the main result has been corrected from the one at the conference.
\end{ack}\vspace{2mm}

\section{Convex-geometric Demazure operators}

Let $G$ be a connected, simply-connected semisimple algebraic group over $\mathbb{C}$, $\mathfrak{g}$ its Lie algebra, $W$ the Weyl group, $I = \{1, \ldots, n\}$ an index set for the vertices of the Dynkin diagram, and $(c_{i, j})_{i, j \in I}$ the Cartan matrix. We fix a reduced word ${\bf i} = (i_1, \ldots, i_N) \in I^N$ for the longest element $w_0 \in W$. For $i \in I$, let $d_i$ denote the number of $1 \le k \le N$ such that $i_k = i$. We identify $\r^N$ with the direct sum $\r^{d_1} \oplus \cdots \oplus \r^{d_n}$ as follows:
\begin{align*}
\r^N &\xrightarrow{\sim} \r^{d_1} \oplus \cdots \oplus \r^{d_n},\\
(a_1, \ldots, a_N) &\mapsto (a_1 ^{(1)}, \ldots, a_{d_1} ^{(1)}, \ldots, a_1 ^{(n)}, \ldots, a_{d_n} ^{(n)}),
\end{align*}
where we set $(a_1 ^{(i)}, \ldots, a_{d_i} ^{(i)}) \coloneqq (a_k)_{1 \le k \le N;\ i_k = i}$. If we define an $\r$-linear subspace $(\r^{d_i})^\perp \subset \r^N$ to be \[(\r^{d_i})^\perp \coloneqq \bigoplus_{1 \le j \le n;\ j \neq i} \r^{d_j},\] then we have $\r^N = (\r^{d_i})^\perp \oplus \r^{d_i}$. A subset $P \subset \r^N$ is called a \emph{convex polytope} if it is the convex hull of a finite number of points. Let $\mathscr{P}_N$ denote the set of convex polytopes in $\r^N$. This set is endowed with a commutative semigroup structure by the Minkowski sum of convex polytopes: \[P_1 + P_2 \coloneqq \{p_1 + p_2 \mid p_1 \in P_1,\ p_2 \in P_2\}.\] For $c \in \r_{\ge 0}$ and a convex polytope $P \subset \r^N$, define a convex polytope $c P \subset \r^N$ by $c P \coloneqq \{c p \mid p \in P\}$. We denote by $F(\r^N)$ the set of $\r$-valued functions on $\r^N$. For a convex polytope $P \subset \r^N$, let $\mathbb{I}_P \in F(\r^N)$ be the characteristic function of $P$, that is, 
\begin{align*}
\mathbb{I}_P (x) =
\begin{cases}
1 &{\rm if}\ x \in P,\\
0 &{\rm otherwise}.
\end{cases}
\end{align*}

\vspace{2mm}\begin{defi}[{\cite[Definition 2]{Kir1}}]\normalfont
A convex polytope $P \subset \r^N$ is called a \emph{parapolytope} if for all $i \in I$ and ${\bf c} \in \r^N$, there exist $\mu = (\mu_1, \ldots, \mu_{d_i}),\ \nu = (\nu_1, \ldots, \nu_{d_i}) \in \r^{d_i}$ such that \[P \cap ({\bf c} + \r^{d_i}) = {\bf c} + \Pi(\mu, \nu),\] where $[\mu_k, \nu_k] \coloneqq \{x \in \r \mid \mu_k \le x \le \nu_k\} \subset \r$ for $1 \le k \le d_i$, and \[\Pi(\mu, \nu) \coloneqq [\mu_1, \nu_1] \times \cdots \times [\mu_{d_i}, \nu_{d_i}] \subset \r^{d_i}.\]
\end{defi}\vspace{2mm}

Let $\mathscr{P}_\Box \subset \mathscr{P}_N$ denote the set of parapolytopes in $\r^N$. For $1 \le k \le N$, we set \[\mathscr{P}_\Box (k) \coloneqq \{P \in \mathscr{P}_\Box \mid {\rm the\ coordinate\ function}\ a_k\ {\rm is\ constant\ on}\ P\}.\] For $i \in I$, define an $\r$-linear function $l_i \colon \r^N \rightarrow \r$ by \[l_i({\bf a}) \coloneqq -\sum_{j \in I;\ j \neq i} c_{i, j} (a^{(j)} _1 + \cdots + a^{(j)} _{d_j}).\] Following \cite[Sect.\ 2.3]{Kir1}, we define a \emph{convex-geometric Demazure operator} $D_i ^{(k)} \colon \mathscr{P}_\Box (k) \rightarrow F(\r^N)$ for $i \in I$ and $1 \le k \le N$ such that $i_k = i$ as follows. We take $P \in \mathscr{P}_\Box (k)$, and denote by $1 \le m_k \le d_i$ the number of $1 \le l \le k$ such that $i_l = i_k$. 

First, we consider the case $P \subset {\bf c} + \r^{d_i}$ for some ${\bf c} \in (\r^{d_i})^\perp$. Write \[P = {\bf c} + \Pi(\mu, \nu) = {\bf c} + [\mu_1, \nu_1] \times \cdots \times [\mu_{d_i}, \nu_{d_i}],\] and set \[\nu_{m_k} ^\prime \coloneqq \nu_{m_k} + l_i ({\bf c}) -\sum_{1 \le l \le d_i} (\mu_l + \nu_l).\] We define $\nu^\prime \in \r^{d_i}$ (resp., $\mu^\prime \in \r^{d_i}$) by replacing $\nu_{m_k}$ in $\nu$ (resp., $\mu_{m_k}$ in $\mu$) by $\nu_{m_k} ^\prime$. If $\nu_{m_k} ^\prime \ge \nu_{m_k}$, then we set \[D_i ^{(k)}(P) \coloneqq \mathbb{I}_{{\bf c} + \Pi(\mu, \nu^\prime)}.\] If $\nu_{m_k} ^\prime < \nu_{m_k}$, then we set \[D_i ^{(k)}(P) \coloneqq -\mathbb{I}_{{\bf c} + \Pi(\mu^\prime, \nu)} + \mathbb{I}_P + \mathbb{I}_{P^\prime},\] where $P^\prime$ is the facet of ${\bf c} + \Pi(\mu^\prime, \nu)$ parallel to $P$.

In general, we define $D_i ^{(k)} (P) \in F(\r^N)$ by \[D_i ^{(k)}(P)|_{{\bf c} + \r^{d_i}} \coloneqq D_i ^{(k)}(P \cap ({\bf c} + \r^{d_i}))\] for ${\bf c} \in (\r^{d_i})^\perp$. 

\vspace{2mm}\begin{defi}\normalfont
Let $1 \le k \le N$, $i \coloneqq i_k$, and $P \in \mathscr{P}_\Box (k)$. If the function $D_i ^{(k)}(P)$ is identical to the characteristic function $\mathbb{I}_Q$ of a convex polytope $Q$, then by abuse of notation, we write $Q = D_i ^{(k)}(P)$.
\end{defi}

\vspace{2mm}\begin{rem}\normalfont
In the paper \cite{Kir1}, she defined convex-geometric Demazure operators for \emph{convex parachains}. Even for parapolytopes, our definition of convex-geometric Demazure operators is slightly different from hers since we specify which direction we expand in.
\end{rem}\vspace{2mm}

See \cite[Sect.\ 2.4]{Kir1} for examples of functions constructed by convex-geometric Demazure operators. Our purpose is to compute $D_{i_N} ^{(N)} \cdots D_{i_1} ^{(1)} ({\bf a})$ for specific ${\bf a} \in \r^N$. Note that $D_{i_N} ^{(N)} \cdots D_{i_1} ^{(1)} ({\bf a})$ is not necessarily well-defined as the following example.

\vspace{2mm}\begin{ex}\normalfont\label{e:counter example}
Let $G = SL_4(\c)$, and ${\bf i} = (2, 1, 2, 3, 2, 1) \in I^6$, which is a reduced word for $w_0$. Then, the functions $l_i$, $i \in I$, are given by \[l_1({\bf a}) = l_3({\bf a}) = a_1 ^{(2)} + a_2 ^{(2)} + a_3 ^{(2)}\ {\rm and}\ l_2({\bf a}) = a_1 ^{(1)} + a_2 ^{(1)} + a_1 ^{(3)}\] for ${\bf a} = (a_1 ^{(1)}, a_2 ^{(1)}, a_1 ^{(2)}, a_2 ^{(2)}, a_3 ^{(2)}, a_1 ^{(3)}) \in \r^6 = \r^2 \oplus \r^3 \oplus \r$. If we set \[{\bf a}_{\rm low} \coloneqq -\left(\frac{5}{4}, \frac{1}{4}, \frac{1}{3}, \frac{1}{3}, \frac{4}{3}, \frac{3}{2}\right) \in \r^2 \oplus \r^3 \oplus \r,\] then we have $D_2 ^{(1)}({\bf a}_{\rm low}), D_1 ^{(2)} D_2 ^{(1)} ({\bf a}_{\rm low}), D_2 ^{(3)} D_1 ^{(2)} D_2 ^{(1)} ({\bf a}_{\rm low}) \in \mathscr{P}_\Box$ and $D_3 ^{(4)} D_2 ^{(3)} D_1 ^{(2)} D_2 ^{(1)} ({\bf a}_{\rm low}) \in \mathscr{P}_6$. In addition, the polytope $D_3 ^{(4)} D_2 ^{(3)} D_1 ^{(2)} D_2 ^{(1)} ({\bf a}_{\rm low})$ is given by the following conditions: 
\begin{align*}
&(a^{(1)} _2, a^{(2)} _3) = \left(-\frac{1}{4}, -\frac{4}{3}\right),\ -\frac{1}{3} \le a^{(2)} _1 \le \frac{2}{3},\ -\frac{5}{4} \le a^{(1)} _1 \le a^{(2)} _1 + \frac{1}{12},\\
&-\frac{1}{3} \le a^{(2)} _2 \le \min\left\{a^{(1)} _1 + \frac{11}{12}, \frac{2}{3}\right\},\ -\frac{3}{2} \le a^{(3)} _1 \le a^{(2)} _1 + a^{(2)} _2 + \frac{1}{6}.
\end{align*}
Hence for ${\bf c} \coloneqq (-\frac{1}{4}, -\frac{1}{4}, 0, 0, 0, \frac{1}{2}) \in (\r^{d_2})^\perp$, the intersection $D_3 ^{(4)} D_2 ^{(3)} D_1 ^{(2)} D_2 ^{(1)} ({\bf a}_{\rm low}) \cap ({\bf c} + \r^{d_2})$ is identified with the set of $(a^{(2)} _1, a^{(2)} _2, a^{(2)} _3) \in \r^3$ satisfying the following conditions: \[-\frac{1}{3} \le a^{(2)} _1 \le \frac{2}{3},\ -a^{(2)} _1 + \frac{1}{3} \le a^{(2)} _2 \le \frac{2}{3},\ a^{(2)} _3 = -\frac{4}{3}.\] Since this is not of the form $\Pi(\mu, \nu)$, we deduce that $D_3 ^{(4)} D_2 ^{(3)} D_1 ^{(2)} D_2 ^{(1)}({\bf a}_{\rm low})$ is not a parapolytope, and hence that $D_2 ^{(5)} D_3 ^{(4)} D_2 ^{(3)} D_1 ^{(2)} D_2 ^{(1)} ({\bf a}_{\rm low})$ is not well-defined.
\end{ex}\vspace{2mm}

\section{Polyhedral realizations of crystal bases}

In this section, we review some fundamental properties of polyhedral realizations of crystal bases, following \cite{FN, Nak1, NZ}. We start with recalling the definition of abstract crystals, introduced in \cite{Kas4}. Choose a Borel subgroup $B \subset G$ and a maximal torus $T \subset B$. Denote by $\mathfrak{t}$ the Lie algebra of $T$, by $\mathfrak{t}^\ast \coloneqq {\rm Hom}_\c (\mathfrak{t}, \c)$ its dual space, and by $\langle \cdot, \cdot \rangle \colon \mathfrak{t}^\ast \times \mathfrak{t} \rightarrow \c$ the canonical pairing. Let $\{\alpha_i \mid i \in I\} \subset \mathfrak{t}^\ast$ be the set of simple roots, $\{h_i \mid i \in I\} \subset \mathfrak{t}$ the set of simple coroots, and $P \subset \mathfrak{t}^\ast$ the weight lattice.

\vspace{2mm}\begin{defi}[{\cite[Definition 1.2.1]{Kas4}}]\normalfont
A \emph{crystal} $\mathcal{B}$ is a set equipped with maps 
\begin{enumerate}
\item[] ${\rm wt} \colon \mathcal{B} \rightarrow P$,
\item[] $\varepsilon_i \colon \mathcal{B} \rightarrow \z \cup \{-\infty\}$, $\varphi_i \colon \mathcal{B} \rightarrow \z \cup \{-\infty\}\ {\rm for}\ i \in I$,\ and
\item[] $\tilde{e}_i \colon \mathcal{B} \rightarrow \mathcal{B} \cup \{0\}$, $\tilde{f}_i \colon \mathcal{B} \rightarrow \mathcal{B} \cup \{0\}\ {\rm for}\ i \in I$,
\end{enumerate}
satisfying the following conditions:
\begin{enumerate}
\item[(i)] $\varphi_i(b) = \varepsilon_i(b) + \langle{\rm wt}(b), h_i\rangle$ for $i \in I$,
\item[(ii)] ${\rm wt}(\tilde{e}_i b) = {\rm wt}(b) + \alpha_i$, $\varepsilon_i(\tilde{e}_i b) = \varepsilon_i(b) -1$, and $\varphi_i(\tilde{e}_i b) = \varphi_i(b) +1$ for $i \in I$ and $b \in \mathcal{B}$ such that $\tilde{e}_i b \in \mathcal{B}$,
\item[(iii)] ${\rm wt}(\tilde{f}_i b) = {\rm wt}(b) - \alpha_i$, $\varepsilon_i(\tilde{f}_i b) = \varepsilon_i(b) +1$, and $\varphi_i(\tilde{f}_i b) = \varphi_i(b) -1$ for $i \in I$ and $b \in \mathcal{B}$ such that $\tilde{f}_i b \in \mathcal{B}$,
\item[(iv)] $b^\prime = \tilde{e}_i b$ if and only if $b = \tilde{f}_i b^\prime$ for $i \in I$ and $b, b^\prime \in \mathcal{B}$,
\item[(v)] $\tilde{e}_i b = \tilde{f}_i b = 0$ for $i \in I$ and $b \in \mathcal{B}$ such that $\varphi_i(b) = -\infty$;
\end{enumerate}
here, $- \infty$ and $0$ are additional elements that are not contained in $\z$ and $\mathcal{B}$, respectively. 
\end{defi}\vspace{2mm}

The maps $\tilde{e}_i$ and $\tilde{f}_i$ are called the \emph{Kashiwara operators}.

\vspace{2mm}\begin{ex}\normalfont\label{example of crystals}
For $\lambda \in P$, let $R_\lambda = \{r_\lambda\}$ be a crystal consisting of only one element, given by: ${\rm wt}(r_\lambda) = \lambda$, $\varepsilon_i(r_\lambda) = - \langle\lambda, h_i\rangle$, $\varphi_i(r_\lambda) = 0$, and $\tilde{e}_i r_\lambda = \tilde{f}_i r_\lambda = 0$.
\end{ex}\vspace{2mm}

\begin{ex}\normalfont
For $i \in I$, we define a crystal $\widetilde{\mathcal{B}}_i \coloneqq \{(x)_i \mid x \in \z\}$ as follows:
\begin{align*}
&{\rm wt}((x)_i) \coloneqq -x \alpha_i,\ \varepsilon_i((x)_i) \coloneqq x,\ \varphi_i((x)_i) \coloneqq -x,\ \tilde{e}_i(x)_i \coloneqq (x-1)_i,\ \tilde{f}_i(x)_i \coloneqq (x+1)_i,\ {\rm and}\\
&\varepsilon_j((x)_i) = \varphi_j((x)_i) \coloneqq -\infty,\ \tilde{e}_j(x)_i = \tilde{f}_j(x)_i \coloneqq 0\ {\rm for}\  j \neq i.
\end{align*}
\end{ex}

\vspace{2mm}\begin{defi}[{\cite[Sect.\ 1.2]{Kas4}}]\normalfont
Let $\mathcal{B}_1, \mathcal{B}_2$ be two crystals. A map \[\psi \colon \mathcal{B}_1 \cup \{0\} \rightarrow \mathcal{B}_2 \cup \{0\}\] is called a \emph{strict morphism} of crystals from $\mathcal{B}_1$ to $\mathcal{B}_2$ if it satisfies the following conditions:
\begin{enumerate}
\item[(i)] $\psi(0) = 0$,
\item[(ii)] ${\rm wt}(\psi(b)) = {\rm wt}(b)$, $\varepsilon_i(\psi(b)) = \varepsilon_i(b)$, and $\varphi_i(\psi(b)) = \varphi_i(b)$ for $i \in I$ and $b \in \mathcal{B}_1$ such that $\psi(b) \in \mathcal{B}_2$,
\item[(iii)] $\tilde{e}_i \psi(b) = \psi(\tilde{e}_i b)$ and $\tilde{f}_i \psi(b) = \psi(\tilde{f}_i b)$ for $i \in I$ and $b \in \mathcal{B}_1$;
\end{enumerate}
here, if $\psi(b) = 0$, then we set $\tilde{e}_i \psi(b) = \tilde{f}_i \psi(b) = 0$. An injective strict morphism is called a \emph{strict embedding} of crystals. 
\end{defi}\vspace{2mm}

Consider the total order $<$ on $\z \cup \{-\infty\}$ given by the usual order on $\z$, and by $-\infty < s$ for all $s \in \z$. For two crystals $\mathcal{B}_1, \mathcal{B}_2$, we can define another crystal $\mathcal{B}_1 \otimes \mathcal{B}_2$, called the \emph{tensor product} of $\mathcal{B}_1$ and $\mathcal{B}_2$, as follows (see \cite[Sect.\ 1.3]{Kas4}): 
\begin{align*}
&\mathcal{B}_1 \otimes \mathcal{B}_2 \coloneqq \{b_1 \otimes b_2 \mid b_1 \in \mathcal{B}_1,\ b_2 \in \mathcal{B}_2\},\\
&{\rm wt}(b_1 \otimes b_2) \coloneqq {\rm wt}(b_1) + {\rm wt}(b_2),\\
&\varepsilon_i(b_1 \otimes b_2) \coloneqq \max\{\varepsilon_i(b_1),\ \varepsilon_i(b_2) - \langle{\rm wt}(b_1), h_i\rangle\},\\
&\varphi_i(b_1 \otimes b_2) \coloneqq \max\{\varphi_i(b_2),\ \varphi_i(b_1) + \langle{\rm wt}(b_2), h_i\rangle\},\\
&\tilde{e}_i(b_1 \otimes b_2) \coloneqq
\begin{cases}
\tilde{e}_i b_1 \otimes b_2 &{\rm if}\ \varphi_i (b_1) \ge \varepsilon_i (b_2),\\
b_1 \otimes \tilde{e}_i b_2 &{\rm if}\ \varphi_i (b_1) < \varepsilon_i (b_2),
\end{cases}\\
&\tilde{f}_i(b_1 \otimes b_2) \coloneqq
\begin{cases}
\tilde{f}_i b_1 \otimes b_2 &{\rm if}\ \varphi_i (b_1) > \varepsilon_i (b_2),\\
b_1 \otimes \tilde{f}_i b_2 &{\rm if}\ \varphi_i (b_1) \le \varepsilon_i (b_2);
\end{cases}
\end{align*}
here, $b_1 \otimes b_2$ stands for an ordered pair $(b_1, b_2)$, and we set $b_1 \otimes 0 = 0 \otimes b_2 = 0$.

Let $P_+ \subset P$ be the set of dominant integral weights, $B^- \subset G$ the Borel subgroup opposite to $B$, and $e_i, f_i, h_i \in \mathfrak{g}$, $i \in I$, the Chevalley generators such that $\{e_i, h_i \mid i \in I\} \subset {\rm Lie}(B)$ and $\{f_i, h_i \mid i \in I\} \subset {\rm Lie}(B^-)$. For $\lambda \in P_+$, we denote by $V(\lambda)$ the irreducible highest weight $G$-module over $\c$ with highest weight $\lambda$ and with highest weight vector $v_{\lambda}$. Lusztig \cite{Lus_can, Lus_quivers, Lus1} and Kashiwara \cite{Kas1,Kas2,Kas3} constructed a specific $\c$-basis of $V(\lambda)$ via the quantized enveloping algebra associated with $\mathfrak{g}$. This is called (the specialization at $q = 1$ of) the \emph{lower global basis} ($=$ the \emph{canonical basis}), and denoted by $\{G_{\lambda}^{\rm low}(b) \mid b \in \mathcal{B}(\lambda)\} \subset V(\lambda)$. The index set $\mathcal{B}(\lambda)$ has a crystal structure, which satisfies the following conditions:
\begin{align*}
&{\rm wt}(b_\lambda) = \lambda,\\
&\varepsilon_i(b) = \max\{k \in \mathbb{Z}_{\ge 0} \mid \tilde{e}_i ^k b \neq 0\},\\ 
&\varphi_i (b) = \max\{k \in \mathbb{Z}_{\ge 0} \mid \tilde{f}_i ^k b \neq 0\},\\
&e_i \cdot G^{\rm low} _{\lambda} (b) \in \c^\times G^{\rm low} _{\lambda} (\tilde{e}_i b) + \sum_{\substack{b^\prime \in \mathcal{B}(\lambda);\ {\rm wt}(b^\prime) = {\rm wt}(b) + \alpha_i,\\ \varphi_i (b^\prime) > \varphi_i (b) + 1}} \c G^{\rm low} _{\lambda} (b^\prime),\\
&f_i \cdot G^{\rm low} _{\lambda} (b) \in \c^\times G^{\rm low} _{\lambda} (\tilde{f}_i b) + \sum_{\substack{b^\prime \in \mathcal{B}(\lambda);\ {\rm wt}(b^\prime) = {\rm wt}(b) -\alpha_i,\\ \varepsilon_i (b^\prime) > \varepsilon_i (b) + 1}} \c G^{\rm low} _{\lambda} (b^\prime)
\end{align*}
for $i \in I$ and $b \in \mathcal{B}(\lambda)$, where $\c^\times \coloneqq \c \setminus \{0\}$, $G^{\rm low} _{\lambda} (0) \coloneqq 0$ if $\tilde{e}_i b = 0$ or $\tilde{f}_i b = 0$, and $b_\lambda \in \mathcal{B}(\lambda)$ is given by $G^{\rm low} _{\lambda} (b_\lambda) \in \c^\times v_\lambda$. We call $\mathcal{B}(\lambda)$ the \emph{crystal basis} for $V(\lambda)$; see \cite{Kas5} for a survey on lower global bases and crystal bases. 

Fix a reduced word ${\bf i} = (i_1, \ldots, i_N) \in I^N$ for the longest element $w_0 \in W$, and consider a sequence ${\bf j} = (\ldots, j_k, \ldots, j_{N+1}, j_N, \ldots, j_1)$ of elements in $I$ such that $j_k = i_{N-k+1}$ for $1 \le k \le N$, $j_k \neq j_{k+1}$ for all $k \ge 1$, and the cardinality of $\{k \ge 1 \mid j_k = i\}$ is $\infty$ for every $i \in I$. Following \cite{Kas4} and \cite{NZ}, we associate to ${\bf j}$ a crystal structure on \[\z^{\infty} \coloneqq \{(\ldots, a_k, \ldots, a_2, a_1) \mid a_k \in \z\ {\rm for}\ k \ge 1\ {\rm and}\ a_k = 0\ {\rm for}\ k \gg 0\}\] as follows. For $k \ge 1$, $i \in I$, and ${\bf a} = (\ldots, a_l, \ldots, a_2, a_1) \in \z^\infty$, we set 
\begin{align*}
&\sigma_k({\bf a}) \coloneqq a_k + \sum_{l > k} c_{j_k, j_l} a_l \in \z,\\
&\sigma^{(i)}({\bf a}) \coloneqq \max\{\sigma_k({\bf a}) \mid k \ge 1,\ j_k = i\} \in \z,\ {\rm and}\\
&M^{(i)}({\bf a}) \coloneqq \{k \ge 1 \mid j_k = i,\ \sigma_k({\bf a}) = \sigma^{(i)}({\bf a})\}.
\end{align*}
Since $a_l = 0$ for $l \gg 0$, the integers $\sigma_k({\bf a}), \sigma^{(i)}({\bf a})$ are well-defined; also, we have $\sigma^{(i)}({\bf a}) \ge 0$. Moreover, $M^{(i)}({\bf a})$ is a finite set if and only if $\sigma^{(i)}({\bf a}) > 0$. Define a crystal structure on $\z^\infty$ by 
\begin{align*}
&{\rm wt}({\bf a}) \coloneqq - \sum_{k = 1} ^\infty a_k \alpha_{j_k},\ \varepsilon_i({\bf a}) \coloneqq \sigma^{(i)}({\bf a}),\ \varphi_i({\bf a}) \coloneqq \varepsilon_i ({\bf a}) + \langle {\rm wt}({\bf a}), h_i \rangle,\ {\rm and}\\
&\tilde{e}_i {\bf a} \coloneqq 
\begin{cases}
(a_k - \delta_{k, \max M^{(i)}({\bf a})})_{k \ge 1} &{\rm if}\ \sigma^{(i)}({\bf a}) > 0,\\
0 &{\rm otherwise},
\end{cases}\\
&\tilde{f}_i {\bf a} \coloneqq (a_k + \delta_{k, \min M^{(i)}({\bf a})})_{k \ge 1} 
\end{align*}
for $i \in I$ and ${\bf a} = (\ldots, a_k, \ldots, a_2, a_1) \in \z^\infty$, where $\delta_{k, l}$ is the Kronecker delta; we denote this crystal by $\z^\infty _{{\bf j}}$. For $k \ge 1$, we set ${\bf j}_{\ge k} \coloneqq (\ldots, j_l, \ldots, j_{k+1}, j_k)$. Then, we see that the crystal $\z^\infty _{{\bf j}}$ is naturally isomorphic to the tensor product $\z^\infty _{{\bf j}_{\ge k}} \otimes \widetilde{\mathcal{B}}_{j_{k-1}} \otimes \cdots \otimes \widetilde{\mathcal{B}}_{j_1}$ for all $k \ge 2$. 

\vspace{2mm}\begin{prop}[{see \cite[Theorem 3.2]{Nak1} and \cite[Proposition 3.1]{Nak2}}]\label{p:polyhedral realizations}
For $\lambda \in P_+$, the following hold.
\begin{enumerate}
\item[{\rm (1)}] There exists a unique strict embedding of crystals \[\widetilde{\Psi}_{\bf j} \colon \mathcal{B} (\lambda) \lhook\joinrel\rightarrow \z^\infty _{\bf j} \otimes R_\lambda\] such that $\widetilde{\Psi}_{\bf j} (b_\lambda) = (\ldots, 0, \ldots, 0, 0) \otimes r_\lambda$.
\item[{\rm (2)}] If $(\ldots, a_k, \ldots, a_2, a_1) \otimes r_\lambda \in \widetilde{\Psi}_{\bf j} (\mathcal{B} (\lambda))$, then $a_k = 0$ for all $k > N$.
\end{enumerate} 
\end{prop}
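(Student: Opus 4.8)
The plan is to prove the uniqueness in (1) first, then exhibit one such embedding, and finally to read off (2) from a good choice of that embedding; the reduced-word hypothesis is used only in the last step.

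\emph{Uniqueness in (1).} The crystal $\mathcal{B}(\lambda)$ is connected and $b_\lambda$ is its unique element with $\varepsilon_i(b_\lambda)=0$ for all $i\in I$, so every $b\in\mathcal{B}(\lambda)$ can be written as $b=\tilde f_{l_1}\cdots\tilde f_{l_m}b_\lambda$. If $\widetilde{\Psi}$ and $\widetilde{\Psi}'$ are strict embeddings with $\widetilde{\Psi}(b_\lambda)=\widetilde{\Psi}'(b_\lambda)=(\dots,0,0)\otimes r_\lambda$, then condition (iii) in the definition of a strict morphism forces $\widetilde{\Psi}(b)=\tilde f_{l_1}\cdots\tilde f_{l_m}\big((\dots,0,0)\otimes r_\lambda\big)=\widetilde{\Psi}'(b)$ for every $b$, whence $\widetilde{\Psi}=\widetilde{\Psi}'$.

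\emph{Existence in (1).} I would build $\widetilde{\Psi}_{\bf j}$ by adapting Kashiwara's construction of the polyhedral realization of $\mathcal{B}(\infty)$: using the isomorphisms $\z^\infty_{\bf j}\cong\z^\infty_{{\bf j}_{\ge k}}\otimes\widetilde{\mathcal{B}}_{j_{k-1}}\otimes\cdots\otimes\widetilde{\mathcal{B}}_{j_1}$, peel off one factor $\widetilde{\mathcal{B}}_{j_k}$ at a time by an elementary strict embedding, check compatibility, and pass to the limit $k\to\infty$; since $\mathcal{B}(\lambda)$ is finite the coordinates stabilize, so the limit lands in $\z^\infty_{\bf j}\otimes R_\lambda$ and sends $b_\lambda$ to $(\dots,0,0)\otimes r_\lambda$. (Equivalently, compose the Kashiwara embedding $\mathcal{B}(\infty)\hookrightarrow\z^\infty_{\bf j}$ with a strict embedding $\mathcal{B}(\lambda)\hookrightarrow\mathcal{B}(\infty)\otimes R_\lambda$.) By uniqueness the outcome is canonical. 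It is convenient to record at this point that every coordinate of $\widetilde{\Psi}_{\bf j}(b)$ is nonnegative: this holds for $b_\lambda$ and is preserved by each $\tilde f_i$, which on $\z^\infty_{\bf j}$ only adds $\delta_{k,\min M^{(i)}}$ to the $k$-th coordinate.

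\emph{Part (2), and the main difficulty.} Here we use $N=\ell(w_0)$. Since $j_k=i_{N-k+1}$ for $1\le k\le N$, the last $N$ tensor factors in $\z^\infty_{\bf j}\cong\z^\infty_{{\bf j}_{\ge N+1}}\otimes\widetilde{\mathcal{B}}_{j_N}\otimes\cdots\otimes\widetilde{\mathcal{B}}_{j_1}$ are precisely $\widetilde{\mathcal{B}}_{i_1}\otimes\cdots\otimes\widetilde{\mathcal{B}}_{i_N}$. I would invoke the string-parametrization embedding attached to the reduced word ${\bf i}$, a strict embedding $\Theta\colon\mathcal{B}(\lambda)\hookrightarrow\widetilde{\mathcal{B}}_{i_1}\otimes\cdots\otimes\widetilde{\mathcal{B}}_{i_N}\otimes R_\lambda$ with $\Theta(b_\lambda)=(0)_{i_1}\otimes\cdots\otimes(0)_{i_N}\otimes r_\lambda$, and post-compose it with the map $x\mapsto(\dots,0,0)\otimes x$. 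The result is a strict embedding $\mathcal{B}(\lambda)\hookrightarrow\z^\infty_{\bf j}\otimes R_\lambda$ sending $b_\lambda$ to $(\dots,0,0)\otimes r_\lambda$, hence equal to $\widetilde{\Psi}_{\bf j}$ by the uniqueness above; its image manifestly satisfies $a_k=0$ for all $k>N$. The work — and, I expect, the main obstacle — lies in two checks. First, justifying the string embedding $\Theta$: this is exactly where the length condition $N=\ell(w_0)$ enters, since it is a reduced word for $w_0$ that makes the string data parametrize \emph{all} of $\mathcal{B}(\lambda)$. Second, verifying that $x\mapsto(\dots,0,0)\otimes x$ is a strict morphism on the image of $\Theta$: this is delicate because $\widetilde{\mathcal{B}}_i$ is not seminormal, so the map fails to be strict on all of $\widetilde{\mathcal{B}}_{i_1}\otimes\cdots\otimes\widetilde{\mathcal{B}}_{i_N}\otimes R_\lambda$; on the image of $\Theta$ it works because there $\varepsilon_i(\Theta(b))=\varepsilon_i(b)\ge0$ and $\varphi_i(\Theta(b))=\varphi_i(b)$, while $\varepsilon_i(\dots,0,0)=\varphi_i(\dots,0,0)=0$ and, crucially, $\tilde e_i(\dots,0,0)=0$ in $\z^\infty_{\bf j}$ (the number $\sigma^{(i)}$ vanishing at the origin), so that the tensor-product rules for $\tilde e_i$ and $\tilde f_i$ yield the desired intertwining relations. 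One may also bypass $\Theta$ altogether by performing the inductive construction of $\widetilde{\Psi}_{\bf j}$ directly and establishing the truncation $a_k=0$, $k>N$, as part of that induction.
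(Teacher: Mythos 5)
A preliminary remark: the paper does not prove this proposition at all --- it is quoted from \cite{Nak1} and \cite{Nak2} --- so your attempt can only be judged on its own merits. The parts you actually carry out are fine. The uniqueness argument is correct and complete: $\mathcal{B}(\lambda)$ is generated from $b_\lambda$ by the $\tilde f_i$, and condition (iii) in the definition of a strict morphism propagates the value at $b_\lambda$ to everything. The existence sketch (compose $\mathcal{B}(\lambda)\hookrightarrow\mathcal{B}(\infty)\otimes R_\lambda$ with the Kashiwara embedding of $\mathcal{B}(\infty)$, or peel off elementary crystals one at a time) is the standard route, and your side computation that $x\mapsto(\ldots,0,0)\otimes x$ intertwines $\tilde e_i,\tilde f_i$ and preserves $\varepsilon_i,\varphi_i$ on any subcrystal where $\varepsilon_i\ge 0$ is correct and is exactly the right bookkeeping.

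The genuine gap is in part (2). Everything there rests on the existence of a strict embedding $\Theta\colon\mathcal{B}(\lambda)\hookrightarrow\widetilde{\mathcal{B}}_{i_1}\otimes\cdots\otimes\widetilde{\mathcal{B}}_{i_N}\otimes R_\lambda$ with $\Theta(b_\lambda)=(0)_{i_1}\otimes\cdots\otimes(0)_{i_N}\otimes r_\lambda$, which you invoke as ``the string-parametrization embedding'' and then yourself flag as the main obstacle without closing it. This is not an off-the-shelf fact independent of the proposition: combined with your verification of the strictness of $x\mapsto(\ldots,0,0)\otimes x$, the existence of $\Theta$ is essentially equivalent to (1) and (2) together, and its proof is precisely the content of \cite{NZ} and \cite{Nak1}. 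What the hypothesis ``${\bf i}$ is a reduced word for $w_0$'' readily gives is the \emph{injectivity} of the string data $(\varepsilon_{i_1}(b),\varepsilon_{i_2}(\tilde e_{i_1}^{\max}b),\ldots)$; that is much weaker than the claim that the resulting map commutes with \emph{every} $\tilde e_j,\tilde f_j$ (including $j\ne i_1$) and preserves every $\varepsilon_j,\varphi_j$, which is where the actual work lies. The alternative you mention --- running the inductive construction of $\widetilde{\Psi}_{\bf j}$ and proving the truncation $a_k=0$ for $k>N$ along the way --- is a viable route, but the key point there is to show that whenever $\tilde f_i$ acts on the $\z^\infty_{\bf j}$-factor of an element ${\bf a}\otimes r_\lambda$ of the image, the incremented position $\min M^{(i)}({\bf a})$ is at most $N$; since $\sigma_k({\bf a})=0$ for all $k>N$ when ${\bf a}$ is supported on $[1,N]$, this is exactly the statement that some $k\le N$ with $j_k=i$ attains $\sigma_k({\bf a})=\sigma^{(i)}({\bf a})$, and you give no argument for it. So the proposal correctly isolates where the difficulty sits but does not resolve it.
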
\vspace{2mm}

The embedding $\widetilde{\Psi}_{\bf j}$ (resp., the image $\widetilde{\Psi}_{\bf j}(\mathcal{B}(\lambda))$) is called the \emph{Kashiwara embedding} (resp., the \emph{polyhedral realization}) of $\mathcal{B}(\lambda)$ with respect to ${\bf j}$. 

\vspace{2mm}\begin{rem}\normalfont
We may regard Proposition \ref{p:polyhedral realizations} (1) as a definition of the crystal $\mathcal{B}(\lambda)$, that is, $\mathcal{B}(\lambda)$ is identified with \[\{\tilde{f}_{k_1} \cdots \tilde{f}_{k_l} ((\ldots, 0, 0) \otimes r_\lambda) \mid l \ge 0,\ k_1, \ldots, k_l \in I\} \setminus \{0\} \subset \z^\infty _{\bf j} \otimes R_\lambda\] as a set, and its crystal structure is given by that on $\z^\infty _{\bf j} \otimes R_\lambda$.
\end{rem}

\vspace{2mm}\begin{defi}\normalfont\label{definition2}
We define $\Psi_{\bf i} \colon \mathcal{B}(\lambda) \hookrightarrow \z^N$, $b \mapsto (a_1, a_2, \ldots, a_N)$, by \[\widetilde{\Psi}_{\bf j} (b) = (\ldots, 0, 0, a_1, a_2, \ldots, a_N) \otimes r_\lambda;\] this is also called the \emph{Kashiwara embedding} of $\mathcal{B}(\lambda)$ with respect to ${\bf i}$.
\end{defi}\vspace{2mm}

Note that the embedding $\Psi_{\bf i}$ is independent of the choice of an extension ${\bf j}$ by \cite[Sect.\ 2.4]{NZ}.

\vspace{2mm}\begin{defi}[{see \cite[Definition 2.15]{FN}}]\normalfont\label{definition3}
Let ${\bf i} \in I^N$ be a reduced word for $w_0$, and $\lambda \in P_+$. Define a subset $\mathcal{S}_{\bf i} (\lambda) \subset \z_{>0} \times \z^N$ by \[\mathcal{S}_{\bf i} (\lambda) \coloneqq \bigcup_{k >0} \{(k, \Psi_{\bf i}(b)) \mid b \in \mathcal{B}(k\lambda)\},\] and denote by $\mathcal{C}_{\bf i} (\lambda) \subset \r_{\ge 0} \times \r^N$ the smallest real closed cone containing $\mathcal{S}_{\bf i} (\lambda)$. Now let us define a subset $\Delta_{\bf i} (\lambda) \subset \r^N$ by \[\Delta_{\bf i} (\lambda) \coloneqq \{{\bf a} \in \r^N \mid (1, {\bf a}) \in \mathcal{C}_{\bf i} (\lambda)\}.\] The set $\Delta_{\bf i} (\lambda)$ is called the \emph{Nakashima-Zelevinsky polytope} associated with ${\bf i}$ and $\lambda$. 
\end{defi}\vspace{2mm}

\begin{prop}[{\cite[Corollaries 2.18 (2), 2.20, and 4.3]{FN}}]\label{p:convexity}
Let ${\bf i} \in I^N$ be a reduced word for $w_0$, and $\lambda \in P_+$. 
\begin{enumerate}
\item[{\rm (1)}] The real closed cone $\mathcal{C}_{\bf i} (\lambda)$ is a rational convex polyhedral cone, and the equality $\mathcal{S}_{\bf i} (\lambda) = \mathcal{C}_{\bf i} (\lambda) \cap (\z_{>0} \times \z^N)$ holds.
\item[{\rm (2)}] The Nakashima-Zelevinsky polytope $\Delta_{\bf i} (\lambda)$ is a rational convex polytope, and the equality $\Delta_{\bf i} (\lambda) \cap \z^N = \Psi_{\bf i} (\mathcal{B}(\lambda))$ holds.
\end{enumerate}
\end{prop}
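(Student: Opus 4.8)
The plan is to establish that $\mathcal{S}_{\bf i}(\lambda) \cup \{(0,{\bf 0})\}$ is a finitely generated, saturated submonoid of $\z_{\ge 0} \times \z^N$; parts (1) and (2) then follow formally by slicing at the affine hyperplane where the first coordinate equals $1$. First I would prove the monoid property, i.e.\ $\Psi_{\bf i}(\mathcal{B}(k\lambda)) + \Psi_{\bf i}(\mathcal{B}(l\lambda)) \subseteq \Psi_{\bf i}(\mathcal{B}((k+l)\lambda))$ for $k,l > 0$. The tool is Cartan multiplication of crystal bases: the connected component of $b_{k\lambda} \otimes b_{l\lambda}$ in $\mathcal{B}(k\lambda) \otimes \mathcal{B}(l\lambda)$ is isomorphic to $\mathcal{B}((k+l)\lambda)$, and one shows, by induction on the length of an $\tilde f$-path from the highest weight element and using the tensor product rule together with the explicit formulas for $\z^\infty_{\bf j}$, that under the embeddings $\widetilde{\Psi}_{\bf j}$ this isomorphism becomes coordinatewise addition on the $\z^\infty$-parts (the crux being that the index sets $M^{(i)}$ governing the Kashiwara operators behave additively along such paths). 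An alternative, more conceptual route is to realize $\Psi_{\bf i}$, up to an affine unimodular change of coordinates, via a valuation $\nu$ with one-dimensional leaves on the section ring $R = \bigoplus_{k \ge 0} H^0(G/B,\mathcal{L}_{k\lambda})$, so that the monoid property is just $\nu(ss') = \nu(s)+\nu(s')$. In either case $\r_{\ge 0}\,\mathcal{S}_{\bf i}(\lambda)$ is a convex cone, and $\mathcal{C}_{\bf i}(\lambda)$ is its closure and is rational since $\mathcal{S}_{\bf i}(\lambda) \subset \z^{N+1}$.

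Next I would show $\mathcal{C}_{\bf i}(\lambda)$ is polyhedral. Nakashima's recursive description of $\widetilde{\Psi}_{\bf j}(\mathcal{B}(\lambda))$ inside $\z^\infty_{\bf j} \otimes R_\lambda$, obtained from the strict embedding of Proposition \ref{p:polyhedral realizations} by peeling off the tensor factors $\widetilde{\mathcal{B}}_{j_k}$, exhibits this set (using $a_k = 0$ for $k > N$) as the lattice points of a finite union $P_1 \cup \cdots \cup P_m$ of rational convex polytopes; the same description for $k\lambda$ scales to $k(P_1 \cup \cdots \cup P_m)$. Combined with the monoid property, it follows that $\Delta_{\bf i}(\lambda) = \mathrm{conv}(P_1 \cup \cdots \cup P_m)$, a rational convex polytope, and hence that $\mathcal{C}_{\bf i}(\lambda)$ is a rational polyhedral cone. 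This is the step I expect to be the real obstacle: for a general reduced word ${\bf i}$, as opposed to the cases handled by Nakashima--Zelevinsky under extra hypotheses, one must control this recursion with no assumptions, and it is here that either a careful convexity/finiteness analysis of the recursion, or the finite generation of the value semigroup of $\nu$ (e.g.\ via a SAGBI-type generating set built from the lower global basis), is needed.

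Finally I would prove saturation, $\mathcal{S}_{\bf i}(\lambda) = \mathcal{C}_{\bf i}(\lambda) \cap (\z_{>0} \times \z^N)$, for which the non-trivial inclusion is that every lattice point of $\mathcal{C}_{\bf i}(\lambda)$ with positive first coordinate is some $(k, \Psi_{\bf i}(b))$. Given such a point $(k,{\bf a})$, lift it to $(\ldots,0,0,{\bf a}) \otimes r_{k\lambda} \in \z^\infty_{\bf j} \otimes R_{k\lambda}$ and apply Kashiwara operators $\tilde e_i$; each subtracts $1$ from a single coordinate, hence preserves the lattice, and because the defining inequalities of $\mathcal{C}_{\bf i}(\lambda)$ are respected by the crystal operators up to unit steps one can continue, staying in the region, until reaching the highest weight element $(\ldots,0,0) \otimes r_{k\lambda}$; reversing the path writes ${\bf a} = \Psi_{\bf i}(b)$ for some $b \in \mathcal{B}(k\lambda)$. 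The reverse inclusion is the definition of $\mathcal{C}_{\bf i}(\lambda)$. Together with the first two steps, saturation gives both that $\mathcal{S}_{\bf i}(\lambda) = \mathcal{C}_{\bf i}(\lambda) \cap (\z_{>0}\times\z^N)$ (part (1)) and, slicing at the first coordinate $= 1$ and using that $\mathcal{C}_{\bf i}(\lambda)$ is a cone, that $\Delta_{\bf i}(\lambda) \cap \z^N = \Psi_{\bf i}(\mathcal{B}(\lambda))$ (part (2)).
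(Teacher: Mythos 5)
This proposition is not proved in the paper at all: it is imported verbatim from \cite[Corollaries 2.18 (2), 2.20, and 4.3]{FN}, so there is no in-paper argument to compare yours against. Judged on its own terms, your proposal has the right architecture (graded monoid $+$ polyhedrality $+$ saturation, then slice at height $1$), and you correctly identify additivity of $\Psi_{\bf i}$ as an ingredient -- your second route for it, via a highest-term valuation on the section ring, is in fact how \cite{FN} obtains it (this is Theorem \ref{t:NOBY polyhedral realizations} here, itself the main theorem of \cite{FN} and not a freebie). Your first route, ``coordinatewise addition on the $\z^\infty$-parts'' under Cartan multiplication, is not right as stated: the sets $M^{(i)}({\bf a}+{\bf a}')$ have no simple relation to $M^{(i)}({\bf a})$ and $M^{(i)}({\bf a}')$, and the additivity needed is for \emph{arbitrary} pairs $b,b'$, not just pairs lying on a common $\tilde f$-path.

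The two load-bearing steps are not actually carried out. For polyhedrality you appeal to ``Nakashima's recursive description of $\widetilde{\Psi}_{\bf j}(\mathcal{B}(\lambda))$ as the lattice points of a finite union of rational polytopes, scaling linearly in $k$'' -- but that description is available only under the ampleness-type hypotheses on $({\bf j},\lambda)$ that this proposition is precisely formulated to avoid (see the Remark following the proposition, which stresses that the proof must not use ampleness). You flag this yourself as ``the real obstacle,'' which is accurate; the resolution in \cite{FN} does not fight the recursion but instead transports the problem, via Kashiwara's $*$-involution and the valuation-theoretic identification, to string cones and string polytopes, where polyhedrality and the lattice-point property are already known. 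Relatedly, your conclusion $\Delta_{\bf i}(\lambda)=\mathrm{conv}(P_1\cup\cdots\cup P_m)$ needs the degree-$k$ slice to be exactly the $k$-dilate of the degree-$1$ slice; the monoid property only gives $k\,\Psi_{\bf i}(\mathcal{B}(\lambda))\subset\Psi_{\bf i}(\mathcal{B}(k\lambda))$, and the reverse containment of convex hulls is exactly the subtle point (its failure in general is why Theorem \ref{t:main result} (1) needs the parapolytope hypothesis). Finally, the saturation argument is circular: ``the defining inequalities of $\mathcal{C}_{\bf i}(\lambda)$ are respected by the crystal operators up to unit steps'' presupposes an explicit inequality description compatible with the $\tilde e_i$, which is unavailable without ampleness and is essentially the statement being proved; you would also need to show that each intermediate lattice point reached is genuinely in $\Psi_{\bf i}(\mathcal{B}(k\lambda))$ and that the descent terminates at the highest weight element. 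As it stands, the proposal is a plausible outline with the hardest thirds missing.
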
\vspace{2mm}

\begin{rem}\normalfont
In the case that $({\bf j}, \lambda)$ is ample (see \cite[Sect.\ 4.2]{Nak1} for the definition), a system of explicit linear inequalities defining $\Delta_{\bf i} (\lambda)$ is given by \cite[Theorem 4.1]{Nak1} (see also \cite[Corollary 5.3]{FN}). Note that in order to prove Proposition \ref{p:convexity}, the ampleness of $({\bf j}, \lambda)$ is not necessary.
\end{rem}\vspace{2mm}

\begin{ex}[{\cite{Nak1}}]\normalfont\label{e:GZ}
Let $G = SL_{n+1}(\c)$, and $\lambda \in P_+$. We consider a specific reduced word ${\bf i} = (1, 2, 1, 3, 2, 1, \ldots, n, n-1, \ldots, 1)$ for $w_0$. Then, by \cite[Theorem 6.1]{Nak1} (see also \cite[Corollary 2.7]{Nak3}), the Nakashima-Zelevinsky polytope $\Delta_{\bf i} (\lambda)$ is identical to the set of $(a_n ^{(1)}, a_{n -1} ^{(2)}, a_{n -1} ^{(1)}, \ldots, a_1 ^{(n)}, \ldots, a_1 ^{(1)}) \in \r^N$ satisfying the following conditions:
\[\begin{matrix}
\lambda_{\ge 1} & & \lambda_{\ge 2} & & \cdots & & & \lambda_{\ge n} & & 0\\
 & a_1 ^{(1)} + \lambda_{\ge 2} & & a_2 ^{(1)} + \lambda_{\ge 3} & & \cdots & & & a_n ^{(1)} & \\
 & & a_1 ^{(2)} + \lambda_{\ge 3} & & \cdots & & & a_{n -1} ^{(2)} & & \\
 & & & \ddots & & \ldots & & & & \\
 & & & & a_1 ^{(n-1)} + \lambda_{\ge n} & & a_2 ^{(n-1)} & & & \\
 & & & & & a_1 ^{(n)}, & & & & 
\end{matrix}\]
where $N \coloneqq \frac{n(n +1)}{2}$, $\lambda_{\ge k} \coloneqq \sum_{k \le l \le n} \langle \lambda, h_l \rangle$ for $1 \le k \le n$, and the notation 
\[\begin{matrix}
a & & c\\
 & b & 
\end{matrix}\]
means that $a \ge b \ge c$. This implies that the translation \[\Delta_{\bf i} (\lambda) + (0, \underbrace{0, \lambda_{\ge n}}_{2}, \underbrace{0, \lambda_{\ge n}, \lambda_{\ge n -1}}_{3}, \ldots, \underbrace{0, \lambda_{\ge n}, \lambda_{\ge n -1}, \ldots, \lambda_{\ge 2}}_{n})\] of the Nakashima-Zelevinsky polytope is identical to the Gelfand-Zetlin polytope $GZ(\overline{\lambda})$ associated with the non-increasing sequence $\overline{\lambda} \coloneqq (\lambda_{\ge 1}, \lambda_{\ge 2}, \ldots, \lambda_{\ge n}, 0)$.
\end{ex}\vspace{2mm}

For $w \in W$ and $\lambda \in P_+$, let $v_{w\lambda} \in V(\lambda)$ be a weight vector of weight $w \lambda$, which is called an \emph{extremal weight vector}. We define a $B$-submodule $V_w(\lambda) \subset V(\lambda)$ (resp., a $B^-$-submodule $V^w(\lambda) \subset V(\lambda)$) by 
\begin{align*}
&V_w(\lambda) \coloneqq \sum_{b \in B} \c b v_{w\lambda}\\
({\rm resp.},\ &V^w(\lambda) \coloneqq \sum_{b \in B^-} \c b v_{w\lambda});
\end{align*}
this is called the \emph{Demazure module} (resp., the \emph{opposite Demazure module}) associated with $w \in W$. By \cite[Proposition 3.2.3 (i) and equation (4.1)]{Kas4}, there uniquely exists a subset $\mathcal{B}_w(\lambda)$ (resp., $\mathcal{B}^w(\lambda)$) of $\mathcal{B}(\lambda)$ such that 
\begin{align*}
&V_w(\lambda) = \sum_{b \in \mathcal{B}_w(\lambda)} \c G^{\rm low} _\lambda(b)\\
({\rm resp.},\ &V^w(\lambda) = \sum_{b \in \mathcal{B}^w(\lambda)} \c G^{\rm low} _\lambda(b));
\end{align*}
this subset $\mathcal{B}_w(\lambda)$ (resp., $\mathcal{B}^w(\lambda)$) is called a \emph{Demazure crystal} (resp., an \emph{opposite Demazure crystal}). Let $b_{w \lambda} \in \mathcal{B}(\lambda)$ denote the extremal weight element of weight $w \lambda$, that is, $b_{w \lambda}$ is a unique element in $\mathcal{B}(\lambda)$ such that $G^{\rm low} _{\lambda} (b_{w \lambda}) \in \c^\times v_{w \lambda}$. Then, we have \[\mathcal{B}_w(\lambda) \cap \mathcal{B}^w(\lambda) = \{b_{w \lambda}\}.\] Let $\{s_i \mid i \in I\} \subset W$ be the set of simple reflections. The following is a collection of fundamental properties of Demazure crystals and opposite Demazure crystals.

\vspace{2mm}\begin{prop}[{\cite[Propositions 3.2.3 (ii), (iii) and 4.2]{Kas4}}]\label{p:properties of Demazure}
Let $w \in W$, and $\lambda \in P_+$.
\begin{enumerate}
\item[{\rm (1)}] $\tilde{e}_i \mathcal{B}_w(\lambda) \subset \mathcal{B}_w(\lambda) \cup \{0\}$ and $\tilde{f}_i \mathcal{B}^w(\lambda) \subset \mathcal{B}^w(\lambda) \cup \{0\}$ for all $i \in I$.  
\item[{\rm (2)}] If $s_i w < w$, then 
\begin{align*}
&\mathcal{B}_w(\lambda) = \bigcup_{k \ge 0} \tilde{f}_i ^k \mathcal{B}_{s_i w}(\lambda) \setminus \{0\},\\
&\mathcal{B}^{s_i w}(\lambda) = \bigcup_{k \ge 0} \tilde{e}_i ^k \mathcal{B}^w(\lambda) \setminus \{0\}.
\end{align*}
\item[{\rm (3)}] Let ${\bf i} = (i_1, \ldots, i_r) \in I^r$ be a reduced word for $w \in W$. Then, \[\mathcal{B}_w(\lambda) = \{\tilde{f}_{i_1} ^{a_1} \cdots \tilde{f}_{i_r} ^{a_r} b_\lambda \mid a_1, \ldots, a_r \in \mathbb{Z}_{\ge 0}\} \setminus \{0\}.\]
\item[{\rm (4)}] Let ${\bf i} = (i_1, \ldots, i_r) \in I^r$ be a reduced word for $w w_0 \in W$. Then, \[\mathcal{B}^w(\lambda) = \{\tilde{e}_{i_1} ^{a_1} \cdots \tilde{e}_{i_r} ^{a_r} b_{w_0 \lambda} \mid a_1, \ldots, a_r \in \mathbb{Z}_{\ge 0}\} \setminus \{0\}.\] 
\end{enumerate}
\end{prop}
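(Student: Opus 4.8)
The plan is to deduce these statements (which are due to Kashiwara, \cite{Kas4}) from two inputs: the triangularity of the Chevalley action on the lower global basis recalled just above, and the compatibility of $\{G^{\rm low}_\lambda(b) \mid b \in \mathcal{B}(\lambda)\}$ with the subalgebra $\mathfrak{sl}_2^{(i)} \subset \mathfrak{g}$ spanned by $e_i, f_i, h_i$, for each fixed $i \in I$. The latter says that, writing $\mathcal{B}(\lambda)^i_{\max} \coloneqq \{b \in \mathcal{B}(\lambda) \mid \tilde{e}_i b = 0\}$, one has a decomposition $V(\lambda) = \bigoplus_{b \in \mathcal{B}(\lambda)^i_{\max}} M_i(b)$ into irreducible $\mathfrak{sl}_2^{(i)}$-submodules $M_i(b) = \bigoplus_{0 \le k \le \varphi_i(b)} \c\, G^{\rm low}_\lambda(\tilde{f}_i^k b)$, with $f_i^{(k)} G^{\rm low}_\lambda(b) = G^{\rm low}_\lambda(\tilde{f}_i^k b)$, where $f_i^{(k)} \coloneqq f_i^k / k!$. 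I would take this $\mathfrak{sl}_2$-compatibility as known, since it is the one genuinely non-formal ingredient (it reflects the quantum-group construction of the canonical basis).

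First I would settle (1). Since $V_w(\lambda) = \sum_{b \in \mathcal{B}_w(\lambda)} \c\, G^{\rm low}_\lambda(b)$ is a $B$-submodule and $\{G^{\rm low}_\lambda(b)\}$ is a $\c$-basis of $V(\lambda)$, we have $\mathcal{B}_w(\lambda) = \{b \in \mathcal{B}(\lambda) \mid G^{\rm low}_\lambda(b) \in V_w(\lambda)\}$. For $b \in \mathcal{B}_w(\lambda)$ with $\tilde{e}_i b \neq 0$, the relation $e_i \cdot G^{\rm low}_\lambda(b) \in \c^\times G^{\rm low}_\lambda(\tilde{e}_i b) + \sum \c\, G^{\rm low}_\lambda(b')$ shows that $G^{\rm low}_\lambda(\tilde{e}_i b)$ occurs with nonzero coefficient in an element of $V_w(\lambda)$, hence $\tilde{e}_i b \in \mathcal{B}_w(\lambda)$; thus $\tilde{e}_i \mathcal{B}_w(\lambda) \subset \mathcal{B}_w(\lambda) \cup \{0\}$. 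The statement $\tilde{f}_i \mathcal{B}^w(\lambda) \subset \mathcal{B}^w(\lambda) \cup \{0\}$ follows in exactly the same way, using that $V^w(\lambda)$ is a $B^-$-submodule and the corresponding triangularity for $f_i$.

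For (2), the external fact I would invoke is Demazure's recursion: if $s_i w < w$, then $V_w(\lambda)$ is stable under the minimal parabolic subgroup $P_i \supset B$, so $V_w(\lambda) = U(\mathfrak{p}_i) v_{w\lambda} = \sum_{k \ge 0} f_i^{(k)} V_{s_i w}(\lambda)$ (because $v_{w\lambda} \in \c^\times f_i^{(m)} v_{s_i w\lambda}$ for some $m \ge 0$ with $e_i v_{s_i w\lambda} = 0$, and $U(\mathfrak{p}_i) = \sum_k f_i^k U(\mathfrak{b})$ by the PBW theorem). Now combine with (1): since $\mathcal{B}_{s_i w}(\lambda)$ is $\tilde{e}_i$-stable, for each $b_0 \in \mathcal{B}(\lambda)^i_{\max}$ the intersection $V_{s_i w}(\lambda) \cap M_i(b_0)$ is the span of an initial segment $G^{\rm low}_\lambda(b_0), \ldots, G^{\rm low}_\lambda(\tilde{f}_i^{k^*} b_0)$ of the $\mathfrak{sl}_2^{(i)}$-string, which is nonzero exactly when $b_0 \in \mathcal{B}_{s_i w}(\lambda)$, and applying all the $f_i^{(k)}$ to such an initial segment recovers the whole of $M_i(b_0)$. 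Therefore $V_w(\lambda) = \bigoplus_{b_0 \in \mathcal{B}_{s_i w}(\lambda) \cap \mathcal{B}(\lambda)^i_{\max}} M_i(b_0)$, and passing back to the basis gives $\mathcal{B}_w(\lambda) = \bigcup_{k \ge 0} \tilde{f}_i^k \mathcal{B}_{s_i w}(\lambda) \setminus \{0\}$. For the second identity of (2), I would use the Chevalley involution $\omega$: twisting the $\mathfrak{g}$-action by $\omega$ turns $V(\lambda)$ into a module isomorphic to $V(-w_0\lambda)$, carries $V^w(\lambda)$ onto the Demazure module $V_{w w_0}(-w_0\lambda)$ (it sends $v_{w\lambda}$ to an extremal weight vector of weight $(w w_0)(-w_0\lambda)$) and intertwines $\tilde{e}_i$ with $\tilde{f}_i$; under this correspondence the first identity of (2) becomes the second.

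Finally, (3) and (4) are formal consequences of (2), proved by induction along a reduced word. For (3): if $(i_1, \ldots, i_r)$ is reduced for $w$, then $s_{i_1} w < w$ and $(i_2, \ldots, i_r)$ is reduced for $s_{i_1} w$, so (2) and the inductive hypothesis give $\mathcal{B}_w(\lambda) = \bigcup_{a_1 \ge 0} \tilde{f}_{i_1}^{a_1} \mathcal{B}_{s_{i_1} w}(\lambda) \setminus \{0\} = \{\tilde{f}_{i_1}^{a_1} \cdots \tilde{f}_{i_r}^{a_r} b_\lambda \mid a_1, \ldots, a_r \in \z_{\ge 0}\} \setminus \{0\}$; the base case is $\mathcal{B}_e(\lambda) = \{b_\lambda\}$, since $V_e(\lambda) = \c v_\lambda$. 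For (4): if $(i_1, \ldots, i_r)$ is reduced for $w w_0$, then $\ell(s_{i_1} w) = \ell(w) + 1$ (using $\ell(u w_0) = N - \ell(u)$ for every $u \in W$), i.e. $s_{i_1}(s_{i_1} w) = w < s_{i_1} w$, so the second identity of (2) applied with $s_{i_1} w$ in place of $w$ gives $\mathcal{B}^w(\lambda) = \bigcup_{a_1 \ge 0} \tilde{e}_{i_1}^{a_1} \mathcal{B}^{s_{i_1} w}(\lambda) \setminus \{0\}$, while $(i_2, \ldots, i_r)$ is reduced for $(s_{i_1} w) w_0$, so the inductive hypothesis applies to $\mathcal{B}^{s_{i_1} w}(\lambda)$; the base case $\mathcal{B}^{w_0}(\lambda) = \{b_{w_0 \lambda}\}$ holds because $V^{w_0}(\lambda) = \c v_{w_0 \lambda}$. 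I expect the main obstacle to lie in step (2): establishing the $\mathfrak{sl}_2$-compatibility of the lower global basis (and, relatedly, pinning down the opposite-Demazure recursion via the Chevalley involution) is where the real content sits — everything else is linear algebra over $\{G^{\rm low}_\lambda(b)\}$ together with bookkeeping with reduced words.
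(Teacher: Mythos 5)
The paper itself offers no proof of this proposition: it is quoted directly from Kashiwara \cite[Propositions 3.2.3 and 4.2]{Kas4}. So your attempt can only be measured against Kashiwara's actual argument. Within it, parts (1), (3), (4) are sound: (1) uses only that $V_w(\lambda)$ (resp.\ $V^w(\lambda)$) is a $B$-stable (resp.\ $B^-$-stable) span of a subset of the basis together with the triangularity of $e_i\cdot G^{\rm low}_\lambda(b)$ (resp.\ $f_i\cdot G^{\rm low}_\lambda(b)$) recalled in Section 3, and (3), (4) are the routine inductions you describe. The module-level recursion $V_w(\lambda)=\sum_{k\ge 0}f_i^{(k)}V_{s_iw}(\lambda)$ and the reduction of the opposite-Demazure statement to the Demazure one via the duality $\mathcal{B}(\lambda)\leftrightarrow\mathcal{B}(-w_0\lambda)$ are also fine.

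The genuine gap is the input you declare you ``would take as known'' for step (2). The strong $\mathfrak{sl}_2^{(i)}$-compatibility you posit --- $V(\lambda)=\bigoplus_{b_0}M_i(b_0)$ with $M_i(b_0)=\bigoplus_{0\le k\le\varphi_i(b_0)}\mathbb{C}\,G^{\rm low}_\lambda(\tilde f_i^k b_0)$ an irreducible $\mathfrak{sl}_2^{(i)}$-submodule and $f_i^{(k)}G^{\rm low}_\lambda(b_0)=G^{\rm low}_\lambda(\tilde f_i^k b_0)$ --- is false. Take $\mathfrak{g}=\mathfrak{sl}_3$ and $\lambda=\varpi_1+\varpi_2$ (the adjoint representation). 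The two global basis vectors in the zero weight space are $G^{\rm low}_\lambda(\tilde f_1\tilde f_2 b_\lambda)=f_1f_2v_\lambda$ and $G^{\rm low}_\lambda(\tilde f_2\tilde f_1 b_\lambda)=f_2f_1v_\lambda$, which are proportional to the coroots $h_1$ and $h_2$ respectively; but the $\mathfrak{sl}_2^{(1)}$-singlet line in the zero weight space is $\mathbb{C}(h_1+2h_2)$. Since $b_0\coloneqq\tilde f_2\tilde f_1 b_\lambda$ is a one-element $1$-string, your $M_1(b_0)$ would be the line $\mathbb{C}\,G^{\rm low}_\lambda(b_0)\propto\mathbb{C} h_2$, which is not $e_1$-stable ($[e_1,h_2]=e_1\neq 0$). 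This is perfectly consistent with the triangularity stated in the paper (here $e_1\cdot G^{\rm low}_\lambda(b_0)$ lands in $\mathbb{C}\,G^{\rm low}_\lambda(\tilde f_2 b_\lambda)$, with $\varphi_1(\tilde f_2b_\lambda)=2>\varphi_1(b_0)+1$), but it kills the exact string decomposition. What is actually true, and what Kashiwara uses, is only a filtration-type statement, roughly $f_i^{(k)}G^{\rm low}_\lambda(b_0)\in G^{\rm low}_\lambda(\tilde f_i^kb_0)+\sum_{\varepsilon_i(b')>k}\mathbb{C}\,G^{\rm low}_\lambda(b')$ for $\varepsilon_i(b_0)=0$. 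With this weaker input, your key step --- ``applying all the $f_i^{(k)}$ to an initial segment of each string recovers the whole of $M_i(b_0)$, hence $\sum_kf_i^{(k)}V_{s_iw}(\lambda)=\bigoplus_{b\in\bigcup_k\tilde f_i^k\mathcal{B}_{s_iw}(\lambda)}\mathbb{C}\,G^{\rm low}_\lambda(b)$'' --- no longer follows, because $f_i^{(k)}$ does not preserve the individual $M_i(b_0)$; one needs an additional induction on $\varepsilon_i$ together with the string property, which is precisely the delicate content of \cite[Sect.\ 5]{Kas3} and \cite[Sect.\ 3]{Kas4}. So the proof of (2) as written does not close.
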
\vspace{2mm}

For $\lambda \in P_+$, the crystal $\mathcal{B}(-w_0 \lambda)$ is identified with the dual crystal of $\mathcal{B}(\lambda)$ (see \cite[Sect.\ 1.2]{Kas4} for more details). Under this identification, the opposite Demazure crystals of $\mathcal{B}(\lambda)$ correspond to the Demazure crystals of $\mathcal{B}(-w_0 \lambda)$. For $i \in I$, a subset $S \subset \mathcal{B}(\lambda)$ is called an \emph{$i$-string} if there exists $b_S ^{\rm high} \in S$ such that $\tilde{e}_i b_S ^{\rm high} = 0$, and such that \[S = \{\tilde{f}_i ^k b_S ^{\rm high} \mid k \in \z_{\ge 0}\} \setminus \{0\}.\] This element $b_S ^{\rm high}$ is called the \emph{highest weight element} of $S$; similarly, the \emph{lowest weight element} $b_S ^{\rm low} \in S$ is defined by $\tilde{f}_i b_S ^{\rm low} = 0$. The following is called the \emph{string property} of Demazure crystals and opposite Demazure crystals.

\vspace{2mm}\begin{prop}[{see \cite[Proposition 3.3.5]{Kas4}}]\label{p:string property}
Let $w \in W$, $\lambda \in P_+$, and $i \in I$.
\begin{enumerate}
\item[{\rm (1)}] For each $i$-string $S$ of $\mathcal{B}(\lambda)$ with highest weight element $b_S ^{\rm high}$, the intersection $\mathcal{B}_w(\lambda) \cap S$ is either $\emptyset$, $S$, or $\{b_S ^{\rm high}\}$.
\item[{\rm (2)}] For each $i$-string $S$ of $\mathcal{B}(\lambda)$ with lowest weight element $b_S ^{\rm low}$, the intersection $\mathcal{B}^w(\lambda) \cap S$ is either $\emptyset$, $S$, or $\{b_S ^{\rm low}\}$.
\end{enumerate}
\end{prop}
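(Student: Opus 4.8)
The plan is to prove part~(1) by induction on the length $\ell(w)$ (for all $i \in I$ simultaneously), and to deduce part~(2) from it by duality. Write $\mathcal{D}_j X \coloneqq \bigcup_{k \ge 0} \tilde{f}_j^{\,k} X \setminus \{0\}$ for $X \subseteq \mathcal{B}(\lambda)$ and $j \in I$, so that Proposition~\ref{p:properties of Demazure}~(2) reads $\mathcal{B}_w(\lambda) = \mathcal{D}_j \mathcal{B}_{s_j w}(\lambda)$ whenever $s_j w < w$. Two preliminary observations: by Proposition~\ref{p:properties of Demazure}~(1) the set $\mathcal{B}_w(\lambda)$ is stable under every $\tilde{e}_i$, so if $b \in \mathcal{B}_w(\lambda) \cap S$ for an $i$-string $S$, then every element of $S$ lying weakly above $b$ --- in particular $b_S^{\rm high}$ --- also lies in $\mathcal{B}_w(\lambda)$; hence $\mathcal{B}_w(\lambda) \cap S$ is automatically an initial segment $\{\tilde{f}_i^{\,m} b_S^{\rm high} \mid 0 \le m \le m_0\}$ of $S$ counted from the top (or $\emptyset$), and the content of~(1) is exactly that $m_0 \in \{0, \#S - 1\}$. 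Moreover, since $\mathcal{B}(-w_0\lambda)$ is the dual crystal of $\mathcal{B}(\lambda)$ --- the duality exchanging the roles of $\tilde{e}_i$ and $\tilde{f}_i$, carrying $i$-strings to $i$-strings with highest and lowest weight elements interchanged, and carrying opposite Demazure crystals $\mathcal{B}^w(\lambda)$ to Demazure crystals $\mathcal{B}_w(-w_0\lambda)$ --- statement~(2) for $\lambda$ is precisely statement~(1) for $-w_0\lambda$. So it remains to prove~(1).

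The base case $w = e$ is clear: $\mathcal{B}_e(\lambda) = \{b_\lambda\}$ and $\tilde{e}_i b_\lambda = 0$ for all $i$, so $b_\lambda$ is the highest weight element of whatever $i$-string contains it, and $\{b_\lambda\} \cap S$ is $\emptyset$ or $\{b_S^{\rm high}\}$. For the inductive step, take $w$ with $\ell(w) \ge 1$, choose $j$ with $s_j w < w$, put $w' \coloneqq s_j w$, and use $\mathcal{B}_w(\lambda) = \mathcal{D}_j \mathcal{B}_{w'}(\lambda)$; by the induction hypothesis $\mathcal{B}_{w'}(\lambda)$ already satisfies~(1). Fix $i$ and an $i$-string $S$. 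If $i = j$, then since $\tilde{f}_i$ maps each $i$-string into itself together with $0$, one gets $\mathcal{B}_w(\lambda) \cap S = \mathcal{D}_i\bigl(\mathcal{B}_{w'}(\lambda) \cap S\bigr)$; by induction $\mathcal{B}_{w'}(\lambda) \cap S$ is $\emptyset$, $\{b_S^{\rm high}\}$, or $S$, and $\mathcal{D}_i$ turns these into $\emptyset$, $S$, and $S$ respectively. So the real case is $i \neq j$.

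For $i \neq j$ I would localize to rank two. Regard $\mathcal{B}(\lambda)$ as a crystal for the operators $\tilde{e}_i, \tilde{f}_i, \tilde{e}_j, \tilde{f}_j$ only; it then breaks up into connected components, each a highest weight crystal for the rank-two root datum on $\{i,j\}$. Let $T$ be the component containing $S$. Because all four operators preserve each component (and $0$), one checks $\mathcal{B}_w(\lambda) \cap T = \mathcal{D}_j\bigl(\mathcal{B}_{w'}(\lambda) \cap T\bigr)$ with $\mathcal{D}_j$ formed inside $T$, while $i$-strings of $\mathcal{B}(\lambda)$ that lie in $T$ coincide with $i$-strings of $T$. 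By the induction hypothesis, $B \coloneqq \mathcal{B}_{w'}(\lambda) \cap T$ meets every $i$-string and every $j$-string of $T$ in $\emptyset$, its highest weight element, or the whole string. Thus everything reduces to the following rank-two lemma: \emph{if $T$ is a connected highest weight crystal for a rank-two root datum with simple indices $i \neq j$, and $B \subseteq T$ meets every $i$-string and every $j$-string in $\emptyset$, the highest weight element, or the whole string, then $\mathcal{D}_j B$ again meets every $i$-string in $\emptyset$, its highest weight element, or the whole string.} Granting this, $\mathcal{B}_w(\lambda) \cap S = (\mathcal{D}_j B) \cap S$ has the required form and the induction closes.

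The rank-two lemma is where the work lies, and I expect it to be the main obstacle. I would prove it by going through the classification of rank-two root data ($A_1 \times A_1$, $A_2$, $B_2 = C_2$, $G_2$) together with the explicit combinatorial models of the associated crystals. The $A_1 \times A_1$ case is immediate, since there $\tilde{e}_i$ commutes with $\tilde{f}_j$. In each of the other three cases one checks that a subset $B \subseteq T$ with the string property for \emph{both} simple roots is either empty or equal to a rank-two Demazure crystal $\mathcal{B}_v(\mu)$ ($\mu$ the highest weight of $T$, $v$ in the rank-two Weyl group), that $\mathcal{D}_j$ carries $\mathcal{B}_v(\mu)$ to $\mathcal{B}_{s_j v}(\mu)$ when $s_j v > v$ and fixes it otherwise, and --- by a finite inspection in each type, e.g.\ via Kashiwara--Nakashima tableaux --- that every $\mathcal{B}_v(\mu)$ has the string property. (Equivalently, one can verify the lemma directly from the tensor product rule by analyzing how $\tilde{e}_i$ interacts with $j$-strings; this is the same delicate bookkeeping.) This is of course the content of \cite[Proposition~3.3.5]{Kas4}, and the outline above indicates how one reconstructs it.
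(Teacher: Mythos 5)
The paper does not prove this proposition at all: it is quoted directly from \cite[Proposition 3.3.5]{Kas4}, so there is no internal argument to compare yours against. Judged on its own terms, your outline has a genuine gap at exactly the point you flag as ``where the work lies'': the rank-two lemma you reduce to is false as stated. Take $T = \mathcal{B}(\varpi_1+\varpi_2)$ in type $A_2$ (the adjoint crystal), with highest weight element $u$, and set $x_1 = \tilde{f}_1 u$, $y_1 = \tilde{f}_2 x_1$, $p = \tilde{f}_2^{\,2} x_1$, $x_2 = \tilde{f}_2 u$, $q = \tilde{f}_1^{\,2} x_2$, and $v$ the lowest weight element. The $1$-strings are $\{u,x_1\}$, $\{x_2,\tilde{f}_1x_2,q\}$, $\{y_1\}$, $\{p,v\}$, and the $2$-strings are $\{u,x_2\}$, $\{x_1,y_1,p\}$, $\{\tilde{f}_1x_2\}$, $\{q,v\}$. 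The set $B \coloneqq \{u, x_1, y_1, p\}$ is closed under $\tilde{e}_1, \tilde{e}_2$ and meets every $1$-string and every $2$-string in $\emptyset$, the highest weight element, or the whole string --- yet it is not a rank-two Demazure crystal (every $\mathcal{B}_v(\varpi_1+\varpi_2)$ containing $x_1,y_1,p$ also contains $x_2$), so your claimed intermediate step already fails. Worse, $\mathcal{D}_1 B = \{u,x_1,y_1,p,v\}$ meets the $2$-string $\{q,v\}$ in $\{v\}$, its \emph{lowest} weight element, so the conclusion of the lemma fails for $j=1$, $i=2$. Thus the string property for both simple roots is strictly weaker information than being (the trace of) a Demazure crystal, and it is not preserved by $\mathcal{D}_j$; your induction hypothesis is too weak to close the loop.

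The surrounding reductions are fine: part (2) does follow from part (1) by duality, the case $i=j$ is handled correctly, and the localization $\mathcal{B}_w(\lambda)\cap T = \mathcal{D}_j(\mathcal{B}_{w'}(\lambda)\cap T)$ is legitimate. But to repair the argument you would have to carry a much stronger statement through the induction --- e.g.\ that $\mathcal{B}_{w'}(\lambda)\cap T$ is actually a rank-two Demazure crystal of $T$, not merely a set with the string property --- and proving that is essentially the hard content of the theorem itself. Kashiwara's actual proof does not proceed by this rank-two reduction; it works with $\mathcal{B}(\infty)$ and the embeddings $\mathcal{B}(\infty)\hookrightarrow\mathcal{B}(\infty)\otimes\widetilde{\mathcal{B}}_i$, running a simultaneous induction on several interlocking statements about $\mathcal{B}_w(\infty)$. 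As it stands, your proposal reduces the proposition to a false lemma and therefore does not constitute a proof.
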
\vspace{2mm}

Let ${\bf i} = (i_1, \ldots, i_N) \in I^N$ be a reduced word for $w_0$, and $\lambda \in P_+$. We write $w_{\ge k} \coloneqq s_{i_k} \cdots s_{i_N} \in W$ and $x_k \coloneqq -\langle w_{\ge k} \lambda, h_{i_k} \rangle$ for $1 \le k \le N$.

\vspace{2mm}\begin{thm}[{\cite[Theorem 4.1]{Nak2}}]\label{t:Nakashima}
Let ${\bf i} = (i_1, \ldots, i_N) \in I^N$ be a reduced word for $w_0$, $\lambda \in P_+$, and $1 \le k \le N$. Then, the image $\Psi_{\bf i} (b_{w_{\ge k} \lambda})$ is given by \[\Psi_{\bf i} (b_{w_{\ge k} \lambda}) = (0, \ldots, 0, x_k, \ldots, x_N).\]
\end{thm}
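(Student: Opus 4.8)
\emph{Proof proposal.} The plan is to argue by downward induction on $k$, using the Kashiwara embedding $\widetilde{\Psi}_{\bf j}$ and the following classical property of extremal weight elements. Since ${\bf i}$ is a reduced word for $w_0$, every suffix $s_{i_k} s_{i_{k+1}} \cdots s_{i_N}$ is reduced, so $\ell(w_{\ge k}) = \ell(w_{\ge k+1}) + 1$; equivalently $w_{\ge k+1}^{-1}(\alpha_{i_k})$ is a positive root, and therefore $x_k = -\langle w_{\ge k} \lambda, h_{i_k} \rangle = \langle w_{\ge k+1} \lambda, h_{i_k} \rangle \ge 0$ as $\lambda$ is dominant. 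Consequently, the extremal weight element $b_{w_{\ge k+1} \lambda}$ satisfies $\tilde{e}_{i_k} b_{w_{\ge k+1} \lambda} = 0$, $\varphi_{i_k}(b_{w_{\ge k+1} \lambda}) = x_k$, and (since $s_{i_k} w_{\ge k+1} = w_{\ge k}$) $\tilde{f}_{i_k}^{x_k} b_{w_{\ge k+1} \lambda} = b_{w_{\ge k} \lambda}$; see \cite{Kas4}. Taking the convention $w_{\ge N+1} = e$ and $b_{w_{\ge N+1}\lambda} = b_\lambda$, the base case $k = N+1$ of the induction is just $\widetilde{\Psi}_{\bf j}(b_\lambda) = (\ldots, 0, 0) \otimes r_\lambda$, which is Proposition \ref{p:polyhedral realizations}~(1).

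For the inductive step, assume that $\widetilde{\Psi}_{\bf j}(b_{w_{\ge k+1} \lambda}) = {\bf a} \otimes r_\lambda$ with ${\bf a} = (\ldots, a_2, a_1) \in \z^\infty$ given by $a_l = x_{N+1-l}$ for $1 \le l \le N-k$ and $a_l = 0$ for $l \ge N-k+1$; via Definition \ref{definition2} this is exactly the induction hypothesis. Since $\widetilde{\Psi}_{\bf j}$ is a strict morphism of crystals, $\widetilde{\Psi}_{\bf j}(b_{w_{\ge k} \lambda}) = \tilde{f}_{i_k}^{x_k}\bigl(\widetilde{\Psi}_{\bf j}(b_{w_{\ge k+1} \lambda})\bigr) = \tilde{f}_{i_k}^{x_k}({\bf a} \otimes r_\lambda)$, so the whole statement reduces to an explicit computation of $\tilde{f}_{i_k}^{x_k}({\bf a} \otimes r_\lambda)$ inside $\z^\infty_{\bf j} \otimes R_\lambda$. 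I would prove by a secondary induction on $t$ that $\tilde{f}_{i_k}^{t}({\bf a} \otimes r_\lambda) = {\bf a}^{(t)} \otimes r_\lambda$ for $0 \le t \le x_k$, where ${\bf a}^{(t)}$ agrees with ${\bf a}$ except that its coordinate in position $N-k+1$ (which carries the letter $j_{N-k+1} = i_k$) equals $t$. Because $N+1-(N-k+1) = k$, setting that coordinate to $x_k$ gives precisely the vector predicted by the theorem; and the degenerate case $x_k = 0$ is immediate since then $b_{w_{\ge k}\lambda} = b_{w_{\ge k+1}\lambda}$.

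The substance of the secondary induction is to verify, for $0 \le t < x_k$, that $\tilde{f}_{i_k}$ increments ${\bf a}^{(t)}$ in position $N-k+1$ --- i.e.\ $\min M^{(i_k)}({\bf a}^{(t)}) = N-k+1$ --- and that the factor $R_\lambda$ does not yet enter the tensor-product rule --- i.e.\ $\varphi_{i_k}({\bf a}^{(t)}) > \varepsilon_{i_k}(r_\lambda) = -\langle \lambda, h_{i_k} \rangle$ --- whereas at $t = x_k$ equality holds, forcing $\tilde{f}_{i_k}({\bf a}^{(x_k)} \otimes r_\lambda) = {\bf a}^{(x_k)} \otimes \tilde{f}_{i_k} r_\lambda = 0$ (as it must, since $b_{w_{\ge k}\lambda}$ is the bottom of its $i_k$-string). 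For the first point, the positions $l \le N$ with $j_l = i_k$ are exactly $l = N+1-p$ as $p$ ranges over the occurrences of the letter $i_k$ in ${\bf i}$; one computes $\sigma_{N-k+1}({\bf a}^{(t)}) = t$, then $\sigma_l({\bf a}^{(t)}) = 0$ for the occurrences with $p < k$ as well as for all $l > N$, and --- via the telescoping identity $\sum_{p'=k+1}^{p-1} c_{i_k, i_{p'}} x_{p'} = \langle w_{\ge p}\lambda - w_{\ge k+1}\lambda, h_{i_k}\rangle$ together with the relation $\langle w_{\ge p}\lambda, h_{i_k}\rangle = -x_p$ (valid because $i_p = i_k$) --- that $\sigma_l({\bf a}^{(t)}) = -x_k + 2t$ for the occurrences with $p > k$. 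Since $t < x_k$, this last value is strictly smaller than $t$, so $\sigma^{(i_k)}({\bf a}^{(t)}) = t$ and $N-k+1$ is the smallest index at which it is attained. For the second point, ${\rm wt}({\bf a}^{(t)}) = w_{\ge k+1}\lambda - \lambda - t\alpha_{i_k}$ gives $\varphi_{i_k}({\bf a}^{(t)}) = t + \langle w_{\ge k+1}\lambda - \lambda - t\alpha_{i_k}, h_{i_k}\rangle = x_k - t - \langle \lambda, h_{i_k} \rangle$, which exceeds $-\langle \lambda, h_{i_k} \rangle$ exactly when $t < x_k$. Plugging ${\bf a}^{(t+1)}$ back in advances the secondary induction, and then the primary induction on $k$ closes.

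The step I expect to be the main obstacle is the bookkeeping of the numbers $\sigma_l({\bf a}^{(t)})$ over all positions $l$ with $j_l = i_k$ --- especially those with $l < N-k+1$, which come from occurrences of the letter $i_k$ strictly to the right of $i_k$ in ${\bf i}$ --- and checking the telescoping identities carefully enough to secure the strict inequality $\sigma_l({\bf a}^{(t)}) < \sigma^{(i_k)}({\bf a}^{(t)})$ there; this is exactly what keeps $\min M^{(i_k)}$ pinned at $N-k+1$ throughout all $x_k$ applications of $\tilde{f}_{i_k}$. Aside from the classical fact about extremal weight elements used in the first paragraph, the argument is a finite computation in $\z^\infty_{\bf j} \otimes R_\lambda$.
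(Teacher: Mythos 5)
Your argument is correct, but note that the paper does not prove this statement at all: it is quoted verbatim from Nakashima's work (the citation \cite{Nak2} attached to the theorem), and everything downstream (Lemma \ref{l:Nakashima}, Proposition \ref{p:polytopes for opposite}) takes it as input. What you have written is therefore a genuine self-contained proof rather than an alternative to one in the text, and it is in the spirit of how the paper manipulates extremal vectors elsewhere (the proof of Lemma \ref{l:Nakashima} uses the dual fact $\widetilde{\Psi}_{\bf j}(b_{w_{\ge k}\lambda}) = \tilde e_{i_{k-1}}^{x_{k-1}}\widetilde{\Psi}_{\bf j}(b_{w_{\ge k-1}\lambda})$). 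I checked the computations you flagged as the main obstacle and they hold: with ${\bf a}^{(t)}$ as you define it, the telescoping identity $\sum_{m=k+1}^{p-1} x_m\alpha_{i_m} = w_{\ge p}\lambda - w_{\ge k+1}\lambda$ together with $\langle w_{\ge p}\lambda, h_{i_p}\rangle = -x_p$ and $\langle w_{\ge k+1}\lambda, h_{i_k}\rangle = x_k$ gives $\sum_{m=k+1}^{p-1} c_{i_k,i_m}x_m = -x_p - x_k$, whence $\sigma_{N+1-p}({\bf a}^{(t)}) = x_p + (-x_p - x_k) + 2t = 2t - x_k < t$ for the occurrences $p>k$, while $\sigma_{N-k+1}({\bf a}^{(t)}) = t$ and $\sigma_l({\bf a}^{(t)}) = 0$ for the occurrences $p<k$ and for $l>N$; so $\sigma^{(i_k)} = t$ and $\min M^{(i_k)} = N-k+1$ for $0 \le t < x_k$ (including $t=0$, where $M^{(i_k)}$ contains all positions $\ge N-k+1$ carrying $i_k$ but its minimum is still $N-k+1$), and $\varphi_{i_k}({\bf a}^{(t)}) = x_k - t - \langle\lambda,h_{i_k}\rangle > \varepsilon_{i_k}(r_\lambda)$ keeps the $R_\lambda$ factor inert. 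The only external ingredient is the standard extremal-vector identity $\tilde f_{i}^{\langle w\lambda,h_i\rangle}b_{w\lambda} = b_{s_i w\lambda}$ for $\langle w\lambda,h_i\rangle \ge 0$, which is legitimately cited from \cite{Kas4}; with that, your two nested inductions close and reproduce Nakashima's formula.
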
\vspace{2mm}

For $1 \le k \le N$, we define
\begin{align*}
&\pi_{\ge k} \colon \mathcal{B}(\lambda) \rightarrow \widetilde{\mathcal{B}}_{i_k} \otimes \cdots \otimes \widetilde{\mathcal{B}}_{i_N} \otimes R_\lambda\ {\rm and}\\
&\pi_{\le k} \colon \mathcal{B}(\lambda) \rightarrow \z_{{\bf j}_{\ge N+1}} ^\infty \otimes \widetilde{\mathcal{B}}_{i_1} \otimes \cdots \otimes \widetilde{\mathcal{B}}_{i_k}
\end{align*}
by $\pi_{\ge k}(b) \coloneqq b_2$ and $\pi_{\le k}(b^\prime) \coloneqq b_1 ^\prime$ for $b, b^\prime \in \mathcal{B}(\lambda)$ such that 
\begin{align*}
&\widetilde{\Psi}_{\bf j} (b) = b_1 \otimes b_2 \in (\z_{{\bf j}_{\ge N+1}} ^\infty \otimes \widetilde{\mathcal{B}}_{i_1} \otimes \cdots \otimes \widetilde{\mathcal{B}}_{i_{k-1}}) \otimes (\widetilde{\mathcal{B}}_{i_k} \otimes \cdots \otimes \widetilde{\mathcal{B}}_{i_N} \otimes R_\lambda)\ {\rm and}\\ 
&\widetilde{\Psi}_{\bf j} (b^\prime) = b_1 ^\prime \otimes b_2 ^\prime \in (\z_{{\bf j}_{\ge N+1}} ^\infty \otimes \widetilde{\mathcal{B}}_{i_1} \otimes \cdots \otimes \widetilde{\mathcal{B}}_{i_k}) \otimes (\widetilde{\mathcal{B}}_{i_{k+1}} \otimes \cdots \otimes \widetilde{\mathcal{B}}_{i_N} \otimes R_\lambda),
\end{align*}
respectively. In addition, we set $\pi_{\ge 0} = \pi_{\le N+1} = \widetilde{\Psi}_{\bf j}$, and 
\begin{align*}
&\pi_{\ge N+1} \colon \mathcal{B}(\lambda) \rightarrow R_\lambda,\ b \mapsto r_\lambda,\\
&\pi_{\le 0} \colon \mathcal{B}(\lambda) \rightarrow \z_{{\bf j}_{\ge N+1}} ^\infty,\ b \mapsto (\ldots, 0, \ldots, 0, 0).
\end{align*} 
We write ${\bf x}_{\ge k} \coloneqq \pi_{\ge k}(b_{w_{\ge k} \lambda})$ for $1 \le k \le N$. 

\vspace{2mm}\begin{lem}\label{l:Nakashima}
The following equalities hold for $2 \le k \le N$$:$ 
\begin{align*}
&\varepsilon_{i_{k-1}}({\bf x}_{\ge k-1}) = x_{k-1},\ \tilde{e}_{i_{k-1}} ^{x_{k-1}} {\bf x}_{\ge k-1} = (0)_{i_{k-1}} \otimes {\bf x}_{\ge k},\\
&\varphi_{i_{k-1}}((0)_{i_{k-1}} \otimes {\bf x}_{\ge k}) = x_{k-1},\ \tilde{f}_{i_{k-1}} ^{x_{k-1}} ((0)_{i_{k-1}} \otimes {\bf x}_{\ge k}) = {\bf x}_{\ge k-1}.
\end{align*}
\end{lem}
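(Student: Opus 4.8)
The plan is to treat each $k$ with $2\le k\le N$ on its own, set $i\coloneqq i_{k-1}$ and $x\coloneqq x_{k-1}$, and deduce everything from Theorem~\ref{t:Nakashima}, the standard properties of extremal weight elements, and the tensor product rule for crystals. First I would settle the signs. Since $(i_{k-1},i_k,\ldots,i_N)$ is a reduced word, $\ell(s_{i_{k-1}}w_{\ge k})=\ell(w_{\ge k})+1$, so $w_{\ge k}^{-1}\alpha_{i_{k-1}}$ is a positive root and $w_{\ge k}^{-1}h_{i_{k-1}}$ a positive coroot; using $\langle s_i\nu,h_i\rangle=-\langle\nu,h_i\rangle$ this gives $x=\langle w_{\ge k}\lambda,h_{i_{k-1}}\rangle=\langle\lambda,w_{\ge k}^{-1}h_{i_{k-1}}\rangle\ge 0$ (as $\lambda\in P_+$) and $\langle w_{\ge k-1}\lambda,h_{i_{k-1}}\rangle=-x$. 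Since $w_{\ge k}=s_{i_{k-1}}w_{\ge k-1}$ and $\langle w_{\ge k-1}\lambda,h_i\rangle\le 0$, the extremal weight elements $b_{w_{\ge k-1}\lambda}$ and $b_{w_{\ge k}\lambda}$ lie on a single $i$-string, with $b_{w_{\ge k-1}\lambda}$ at the bottom, so (see \cite{Kas4}) $\varepsilon_i(b_{w_{\ge k-1}\lambda})=x$, $\tilde{e}_i^{\,x}b_{w_{\ge k-1}\lambda}=b_{w_{\ge k}\lambda}$, $\varphi_i(b_{w_{\ge k}\lambda})=x$, and $\tilde{f}_i^{\,x}b_{w_{\ge k}\lambda}=b_{w_{\ge k-1}\lambda}$.

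Next I would pass to the Kashiwara embedding. By the definition of $\Psi_{\bf i}$ and Theorem~\ref{t:Nakashima}, under the identification $\z^\infty_{\bf j}\otimes R_\lambda\cong\z^\infty_{{\bf j}_{\ge N+1}}\otimes\widetilde{\mathcal{B}}_{i_1}\otimes\cdots\otimes\widetilde{\mathcal{B}}_{i_N}\otimes R_\lambda$ one has
\[\widetilde{\Psi}_{\bf j}(b_{w_{\ge k-1}\lambda})={\bf 0}\otimes(x)_i\otimes{\bf x}_{\ge k},\qquad \widetilde{\Psi}_{\bf j}(b_{w_{\ge k}\lambda})={\bf 0}\otimes(0)_i\otimes{\bf x}_{\ge k},\]
where ${\bf 0}$ is the all-zero element of $\z^\infty_{{\bf j}_{\ge N+1}}\otimes\widetilde{\mathcal{B}}_{i_1}\otimes\cdots\otimes\widetilde{\mathcal{B}}_{i_{k-2}}$; in particular ${\bf x}_{\ge k-1}=(x)_i\otimes{\bf x}_{\ge k}$, ${\rm wt}({\bf x}_{\ge k})=w_{\ge k}\lambda$, and ${\rm wt}({\bf 0})=0$, $\varepsilon_j({\bf 0})=\varphi_j({\bf 0})=0$ for every $j$. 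Because ${\rm wt}({\bf 0})=0$ and $\varphi_i({\bf 0})=0$, the tensor product rule degenerates to the following four elementary facts (each checked by induction on $m$), which are the technical heart of the proof: $\varepsilon_i({\bf 0}\otimes b)=\max\{0,\varepsilon_i(b)\}$; $\varphi_i({\bf 0}\otimes b)=\max\{\varphi_i(b),\langle{\rm wt}(b),h_i\rangle\}$; $\tilde{e}_i^{\,m}({\bf 0}\otimes b)={\bf 0}\otimes\tilde{e}_i^{\,m}b$ whenever $m\le\varepsilon_i(b)$; and $\tilde{f}_i^{\,m}({\bf 0}\otimes b)={\bf 0}\otimes\tilde{f}_i^{\,m}b$ whenever $\varepsilon_i(b)\ge 0$.

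It then remains to assemble things. As $\widetilde{\Psi}_{\bf j}$ is a strict embedding it preserves $\varepsilon_i,\varphi_i$ and intertwines $\tilde{e}_i,\tilde{f}_i$, so $\max\{0,\varepsilon_i({\bf x}_{\ge k-1})\}=\varepsilon_i(b_{w_{\ge k-1}\lambda})=x$, while the tensor rule and $\varepsilon_i((x)_i)=x$ give $\varepsilon_i({\bf x}_{\ge k-1})=\varepsilon_i((x)_i\otimes{\bf x}_{\ge k})\ge x$; hence $\varepsilon_i({\bf x}_{\ge k-1})=x$, which is (1). Applying $\tilde{e}_i^{\,x}$ to $\widetilde{\Psi}_{\bf j}(b_{w_{\ge k-1}\lambda})={\bf 0}\otimes{\bf x}_{\ge k-1}$ (the $\tilde{e}_i$-rule applies since $x=\varepsilon_i({\bf x}_{\ge k-1})$) and using $\tilde{e}_i^{\,x}b_{w_{\ge k-1}\lambda}=b_{w_{\ge k}\lambda}$ yields ${\bf 0}\otimes\tilde{e}_i^{\,x}{\bf x}_{\ge k-1}={\bf 0}\otimes(0)_i\otimes{\bf x}_{\ge k}$; comparing the second tensor factors gives (2). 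For (3): $\varepsilon_i((0)_i\otimes{\bf x}_{\ge k})=\max\{0,\varepsilon_i({\bf x}_{\ge k})\}\ge 0$ and $\langle{\rm wt}((0)_i\otimes{\bf x}_{\ge k}),h_i\rangle=\langle w_{\ge k}\lambda,h_i\rangle=x$, so on one hand the $\varphi_i$-rule applied to $\widetilde{\Psi}_{\bf j}(b_{w_{\ge k}\lambda})={\bf 0}\otimes(0)_i\otimes{\bf x}_{\ge k}$ gives $x=\varphi_i(b_{w_{\ge k}\lambda})=\max\{\varphi_i((0)_i\otimes{\bf x}_{\ge k}),x\}$, whence $\varphi_i((0)_i\otimes{\bf x}_{\ge k})\le x$, and on the other hand the $\varphi_i$-rule directly gives $\varphi_i((0)_i\otimes{\bf x}_{\ge k})\ge\varphi_i((0)_i)+\langle{\rm wt}({\bf x}_{\ge k}),h_i\rangle=x$; thus $\varphi_i((0)_i\otimes{\bf x}_{\ge k})=x$. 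Finally, for (4): since $\varepsilon_i((0)_i\otimes{\bf x}_{\ge k})\ge 0$ the $\tilde{f}_i$-rule applies, so ${\bf 0}\otimes\tilde{f}_i^{\,x}((0)_i\otimes{\bf x}_{\ge k})=\widetilde{\Psi}_{\bf j}(\tilde{f}_i^{\,x}b_{w_{\ge k}\lambda})=\widetilde{\Psi}_{\bf j}(b_{w_{\ge k-1}\lambda})={\bf 0}\otimes{\bf x}_{\ge k-1}$, and comparing tensor factors gives $\tilde{f}_i^{\,x}((0)_i\otimes{\bf x}_{\ge k})={\bf x}_{\ge k-1}$. The one point requiring care is the tensor-factor bookkeeping — the order of the $\widetilde{\mathcal{B}}_{i_l}$ and the precise place where $\pi_{\ge k-1}$ cuts — so that the two displayed Kashiwara embeddings genuinely share the same initial factor ${\bf 0}$; modulo that, the argument is a direct computation.
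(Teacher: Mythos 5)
Your proposal is correct and follows essentially the same route as the paper's proof: both rest on the extremal-weight identities $\varepsilon_{i_{k-1}}(b_{w_{\ge k-1}\lambda})=x_{k-1}$ and $\varphi_{i_{k-1}}(b_{w_{\ge k}\lambda})=x_{k-1}$, the explicit coordinates from Theorem~\ref{t:Nakashima}, the strict-embedding property of $\widetilde{\Psi}_{\bf j}$, and the tensor product rule applied to the all-zero prefix. The paper merely compresses into one sentence the bookkeeping you carry out explicitly (the four ``tensoring with ${\bf 0}$'' facts and the factor-by-factor comparison), so your write-up is a faithful expansion rather than a different argument.
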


\begin{proof}
Since $\varepsilon_{i_{k-1}}(b_{w_{\ge k-1} \lambda}) = x_{k-1} = \varepsilon_{i_{k-1}} ((x_{k-1})_{i_{k-1}})$, and 
\begin{align*}
\widetilde{\Psi}_{\bf j} (b_{w_{\ge k} \lambda}) &= \tilde{e}_{i_{k-1}} ^{\varepsilon_{i_{k-1}}(b_{w_{\ge k-1} \lambda})} \widetilde{\Psi}_{\bf j} (b_{w_{\ge k-1} \lambda})\\
&= \tilde{e}_{i_{k-1}} ^{x_{k-1}} \widetilde{\Psi}_{\bf j} (b_{w_{\ge k-1} \lambda}),
\end{align*}
Theorem \ref{t:Nakashima} and the tensor product rule for crystals imply that $\varepsilon_{i_{k-1}}({\bf x}_{\ge k-1}) = x_{k-1},\ \tilde{e}_{i_{k-1}} ^{x_{k-1}} {\bf x}_{\ge k-1} = (0)_{i_{k-1}} \otimes {\bf x}_{\ge k}$. The other assertions of the lemma follow from these and $\varphi_{i_{k-1}}(b_{w_{\ge k} \lambda}) = x_{k-1}$.
\end{proof}

\begin{prop}\label{p:polytopes for opposite}
The following equality holds for $1 \le k \le N$$:$ 
\[\Psi_{\bf i} (\mathcal{B}^{w_{\ge k}} (\lambda)) = \{{\bf a} = (a_1, \ldots, a_N) \in \Psi_{\bf i} (\mathcal{B}(\lambda)) \mid a_l = x_l\ {\it for\ all}\ k \le l \le N\}.\]
\end{prop}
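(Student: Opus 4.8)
The plan is to reformulate the statement crystal-theoretically and then induct on $k$. Since $\Psi_{\bf i}$ is injective and, by Theorem \ref{t:Nakashima}, $\Psi_{\bf i}(b_{w_{\ge k}\lambda}) = (0,\ldots,0,x_k,\ldots,x_N)$ — so that ${\bf x}_{\ge k} = (x_k)_{i_k}\otimes\cdots\otimes(x_N)_{i_N}\otimes r_\lambda$ and the condition "$a_l = x_l$ for all $k\le l\le N$" on $\Psi_{\bf i}(b)$ is exactly $\pi_{\ge k}(b) = {\bf x}_{\ge k}$ — Proposition \ref{p:polytopes for opposite} is equivalent to the assertion $\mathcal{B}^{w_{\ge k}}(\lambda) = \{b\in\mathcal{B}(\lambda)\mid \pi_{\ge k}(b) = {\bf x}_{\ge k}\}$ for $1\le k\le N$. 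For the base case $k = 1$, the identity $w_0 w_0 = e$ has the empty reduced word, so Proposition \ref{p:properties of Demazure} (4) gives $\mathcal{B}^{w_0}(\lambda) = \{b_{w_0\lambda}\}$; on the right-hand side, $\pi_{\ge 1}(b) = {\bf x}_{\ge 1} = \pi_{\ge 1}(b_{w_0\lambda})$ together with Proposition \ref{p:polyhedral realizations} (2) (the $\z^\infty_{{\bf j}_{\ge N+1}}$-component of $\widetilde{\Psi}_{\bf j}(b)$ is trivial) forces $\widetilde{\Psi}_{\bf j}(b) = \widetilde{\Psi}_{\bf j}(b_{w_0\lambda})$, hence $b = b_{w_0\lambda}$.

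For the inductive step ($k\ge 2$), since ${\bf i}$ is reduced we have $w_{\ge k} = s_{i_{k-1}}w_{\ge k-1}$ with $\ell(w_{\ge k}) = \ell(w_{\ge k-1}) - 1$, so Proposition \ref{p:properties of Demazure} (2) yields $\mathcal{B}^{w_{\ge k}}(\lambda) = \bigcup_{l\ge 0}\tilde{e}_{i_{k-1}}^{\,l}\mathcal{B}^{w_{\ge k-1}}(\lambda)\setminus\{0\}$, while the inductive hypothesis, rewritten via ${\bf x}_{\ge k-1} = (x_{k-1})_{i_{k-1}}\otimes{\bf x}_{\ge k}$, reads $\mathcal{B}^{w_{\ge k-1}}(\lambda) = \{b\mid a_{k-1}(b) = x_{k-1}\text{ and }\pi_{\ge k}(b) = {\bf x}_{\ge k}\}$, where $a_{k-1}(b)$ is the $(k-1)$-st coordinate of $\Psi_{\bf i}(b)$. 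The key input is Lemma \ref{l:Nakashima}: from $\varepsilon_{i_{k-1}}((x_{k-1})_{i_{k-1}}\otimes{\bf x}_{\ge k}) = x_{k-1}$ and the tensor product rule one extracts $\varepsilon_{i_{k-1}}({\bf x}_{\ge k})\le -x_{k-1}$, and similarly $\varphi_{i_{k-1}}({\bf x}_{\ge k})\le x_{k-1}$ and $\langle{\rm wt}({\bf x}_{\ge k}), h_{i_{k-1}}\rangle = x_{k-1}$; moreover $x_{k-1} = \langle w_{\ge k}\lambda, h_{i_{k-1}}\rangle\ge 0$ because $w_{\ge k}^{-1}\alpha_{i_{k-1}}$ is a positive root and $\lambda$ is dominant. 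For the inclusion "$\subseteq$" one checks that whenever $\pi_{\ge k}(b) = {\bf x}_{\ge k}$ and $a_{k-1}(b)\le x_{k-1}$, the operator $\tilde{e}_{i_{k-1}}$ — applied to $\widetilde{\Psi}_{\bf j}(b)$ written with ${\bf x}_{\ge k}$ as its rightmost tensor factor — acts on the complementary factor (its $\varphi_{i_{k-1}}$-value is $\ge -a_{k-1}\ge -x_{k-1}\ge\varepsilon_{i_{k-1}}({\bf x}_{\ge k})$), so that $\pi_{\ge k}$ stays equal to ${\bf x}_{\ge k}$ and $a_{k-1}$ does not increase; iterating, starting from $\mathcal{B}^{w_{\ge k-1}}(\lambda)$ where $a_{k-1} = x_{k-1}$, gives this inclusion. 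For "$\supseteq$", given $b$ with $\pi_{\ge k}(b) = {\bf x}_{\ge k}$ one passes to the lowest element $b^{\rm low}$ of its $i_{k-1}$-string, shows $\pi_{\ge k}(b^{\rm low}) = {\bf x}_{\ge k}$ and $a_{k-1}(b^{\rm low}) = x_{k-1}$ (so $b^{\rm low}\in\mathcal{B}^{w_{\ge k-1}}(\lambda)$ by the inductive hypothesis, also using that $\mathcal{B}^{w_{\ge k-1}}(\lambda)$ is stable under $\tilde{f}_{i_{k-1}}$ by Proposition \ref{p:properties of Demazure} (1)), and concludes $b = \tilde{e}_{i_{k-1}}^{\,\varphi_{i_{k-1}}(b)}b^{\rm low}\in\mathcal{B}^{w_{\ge k}}(\lambda)$.

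The main obstacle I anticipate is the "$\supseteq$" direction, and more precisely the inequality $a_{k-1}(b)\le x_{k-1}$ for every $b$ with $\pi_{\ge k}(b) = {\bf x}_{\ge k}$ (once this is known, the two assertions about $b^{\rm low}$ follow from the monotonicity of $a_{k-1}$ along the $i_{k-1}$-string together with $\varphi_{i_{k-1}}(b^{\rm low}) = 0$): if the inequality failed, moving up the string from $b$ could "leak" into the tail ${\bf x}_{\ge k}$ and destroy the condition $\pi_{\ge k} = {\bf x}_{\ge k}$. Ruling this out is a careful case analysis of the tensor product rule for the three-factor product $\beta\otimes(a_{k-1})_{i_{k-1}}\otimes{\bf x}_{\ge k}$, combining $\varphi_{i_{k-1}}(b)\ge 0$ with the bounds on $\varepsilon_{i_{k-1}}({\bf x}_{\ge k})$, $\varphi_{i_{k-1}}({\bf x}_{\ge k})$ and $\langle{\rm wt}({\bf x}_{\ge k}), h_{i_{k-1}}\rangle$ coming from Lemma \ref{l:Nakashima}. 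An alternative route that sidesteps this is to argue directly from Proposition \ref{p:properties of Demazure} (4): show that applying $\tilde{e}_{i_m}^{\,x_m - a_m}$ successively for $m = 1,\ldots,k-1$ to $b_{w_0\lambda}$ modifies only the first $m$ coordinates and yields nonzero elements exactly when $(a_1,\ldots,a_{k-1})$ satisfies the defining inequalities of $\Psi_{\bf i}(\mathcal{B}(\lambda))$ restricted to the slice $a_l = x_l$ ($l\ge k$), so that any such $b$ is of the form $\tilde{e}_{i_{k-1}}^{c_{k-1}}\cdots\tilde{e}_{i_1}^{c_1}b_{w_0\lambda}$ and hence lies in $\mathcal{B}^{w_{\ge k}}(\lambda)$.
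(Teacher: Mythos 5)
Your proposal is correct and follows essentially the same route as the paper: the same reformulation of the statement as $\mathcal{B}^{w_{\ge k}}(\lambda) = \{b \mid \pi_{\ge k}(b) = {\bf x}_{\ge k}\}$, the same induction on $k$ with Lemma \ref{l:Nakashima} and Proposition \ref{p:properties of Demazure} as the key inputs, and the same passage to the bottom of the $i_{k-1}$-string (the paper's $\tilde{f}_{i_{k-1}}^{\varphi_{i_{k-1}}(b)}b$ is your $b^{\rm low}$) for the inclusion $\supseteq$. The one step you single out as the main obstacle --- that $\pi_{\ge k}(b) = {\bf x}_{\ge k}$ forces $a_{k-1}(b) \le x_{k-1}$ --- is exactly the point the paper passes over with the bare assertion ``$0 \le a \le x_{k-1}$'', and the bounds you extract from Lemma \ref{l:Nakashima} (namely $\varepsilon_{i_{k-1}}({\bf x}_{\ge k}) \le -x_{k-1}$ and $\langle {\rm wt}({\bf x}_{\ge k}), h_{i_{k-1}}\rangle = x_{k-1}$) are precisely what is needed to carry out the tensor-product-rule analysis that closes it.
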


\begin{proof}
We will prove that \[\mathcal{B}^{w_{\ge k}} (\lambda) = \{b \in \mathcal{B}(\lambda) \mid \pi_{\ge k}(b) = {\bf x}_{\ge k}\}\] for $1 \le k \le N$. We proceed by induction on $k$. If $k = 1$, then the assertion is obvious since $\mathcal{B}^{w_{\ge 1}} (\lambda) = \mathcal{B}^{w_0} (\lambda) = \{b_{w_0 \lambda}\}$ and $\Psi_{\bf i} (b_{w_0 \lambda}) = (x_1, \ldots, x_N)$ by Theorem \ref{t:Nakashima}. We assume that $k > 1$, and that \[\mathcal{B}^{w_{\ge k-1}} (\lambda) = \{b \in \mathcal{B}(\lambda) \mid \pi_{\ge k-1}(b) = {\bf x}_{\ge k-1}\}.\] Take $b \in \mathcal{B}^{w_{\ge k}} (\lambda)$. Then, we see by Proposition \ref{p:properties of Demazure} that $\tilde{f}_{i_{k-1}} ^{\varphi_{i_{k-1}}(b)} b \in \mathcal{B}^{w_{\ge k-1}} (\lambda)$; hence the equality $\pi_{\ge k-1}(\tilde{f}_{i_{k-1}} ^{\varphi_{i_{k-1}}(b)} b) = {\bf x}_{\ge k-1}$ holds. From this and Lemma \ref{l:Nakashima}, we deduce that \[\pi_{\ge k}(b) = \pi_{\ge k}(\tilde{e}_{i_{k-1}} ^{\varphi_{i_{k-1}}(b)} \tilde{f}_{i_{k-1}} ^{\varphi_{i_{k-1}}(b)}b) = {\bf x}_{\ge k}.\] Conversely, take $b \in \mathcal{B}(\lambda)$ such that $\pi_{\ge k}(b) = {\bf x}_{\ge k}$. Then, we have $\widetilde{\Psi}_{\bf j}(b) = \pi_{\le k-2}(b) \otimes (a)_{i_{k-1}} \otimes {\bf x}_{\ge k}$ for some $0 \le a \le x_{k-1}$. By Lemma \ref{l:Nakashima}, it follows that $\varphi_{i_{k-1}}((a)_{i_{k-1}} \otimes {\bf x}_{\ge k}) = x_{k-1} -a$, and that $\tilde{f}_{i_{k-1}} ^{x_{k-1} -a} ((a)_{i_{k-1}} \otimes {\bf x}_{\ge k}) = {\bf x}_{\ge k-1}$. Hence by the tensor product rule for crystals, we deduce that 
\begin{align*}
\tilde{f}_{i_{k-1}} ^{\varphi_{i_{k-1}}(b)} \widetilde{\Psi}_{\bf j}(b) &= \tilde{f}_{i_{k-1}} ^{\varphi_{i_{k-1}}(b) -(x_{k-1} -a)} \pi_{\le k-2}(b) \otimes {\bf x}_{\ge k-1}.
\end{align*}
From this, it follows that $\pi_{\ge k-1}(\tilde{f}_{i_{k-1}} ^{\varphi_{i_{k-1}}(b)} b) = {\bf x}_{\ge k-1}$, and hence that $\tilde{f}_{i_{k-1}} ^{\varphi_{i_{k-1}}(b)} b \in \mathcal{B}^{w_{\ge k-1}} (\lambda)$. By Proposition \ref{p:properties of Demazure}, this implies that $b \in \mathcal{B}^{w_{\ge k}} (\lambda)$. These prove the proposition.
\end{proof}

\begin{cor}\label{c:additivity of opposite}
For all $\lambda, \mu \in P_+$ and $1 \le k \le N$, the following holds$:$
\[\Psi_{\bf i} (\mathcal{B}^{w_{\ge k}} (\lambda)) + \Psi_{\bf i} (\mathcal{B}^{w_{\ge k}} (\mu)) \subset \Psi_{\bf i} (\mathcal{B}^{w_{\ge k}} (\lambda + \mu)).\]
\end{cor}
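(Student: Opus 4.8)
The plan is to prove the inclusion by induction on $k$, handling the first $k-1$ coordinates and the last $N-k+1$ coordinates of $\Psi_{\bf i}$ separately. By Proposition~\ref{p:polytopes for opposite} together with the $\z$-linearity of the assignment $\nu \mapsto x_l = -\langle w_{\ge l}\nu, h_{i_l}\rangle$, any point of $\Psi_{\bf i}(\mathcal{B}^{w_{\ge k}}(\lambda)) + \Psi_{\bf i}(\mathcal{B}^{w_{\ge k}}(\mu))$ already has its last $N-k+1$ coordinates equal to the values $x_l$ attached to $\lambda+\mu$; hence, by that same proposition, it is enough to produce, for $b \in \mathcal{B}^{w_{\ge k}}(\lambda)$ and $c \in \mathcal{B}^{w_{\ge k}}(\mu)$, an element $d \in \mathcal{B}(\lambda+\mu)$ with $\Psi_{\bf i}(d) = \Psi_{\bf i}(b) + \Psi_{\bf i}(c)$, the membership $d \in \mathcal{B}^{w_{\ge k}}(\lambda+\mu)$ then being automatic. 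For $k = 1$ this is clear: $\mathcal{B}^{w_{\ge 1}}(\nu) = \mathcal{B}^{w_0}(\nu) = \{b_{w_0\nu}\}$, and $\Psi_{\bf i}(b_{w_0\nu}) = (x_1, \dots, x_N)$ by Theorem~\ref{t:Nakashima}, so the linearity of each $x_l$ gives $\Psi_{\bf i}(b_{w_0\lambda}) + \Psi_{\bf i}(b_{w_0\mu}) = \Psi_{\bf i}(b_{w_0(\lambda+\mu)})$.

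For the inductive step I would pass from $w_{\ge k}$ to $w_{\ge k+1}$ (with $1 \le k \le N-1$). Since $(i_k, \dots, i_N)$ is a reduced word and $s_{i_k}w_{\ge k} = w_{\ge k+1}$ with $\ell(w_{\ge k+1}) < \ell(w_{\ge k})$, Proposition~\ref{p:properties of Demazure}(2) gives $\mathcal{B}^{w_{\ge k+1}}(\nu) = \bigcup_{a \ge 0}\tilde e_{i_k}^{\,a}\mathcal{B}^{w_{\ge k}}(\nu)\setminus\{0\}$ for every $\nu \in P_+$. The key point is to describe how $\tilde e_{i_k}$ acts on $\Psi_{\bf i}$ along this process. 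For $b_0 \in \mathcal{B}^{w_{\ge k}}(\nu)$, Proposition~\ref{p:polytopes for opposite} lets me write $\widetilde{\Psi}_{\bf j}(b_0) = \pi_{\le k-1}(b_0) \otimes {\bf x}_{\ge k}$, and from Lemma~\ref{l:Nakashima} one gets $\varepsilon_{i_k}({\bf x}_{\ge k}) = x_k$ and $\varphi_{i_k}({\bf x}_{\ge k}) = 0$. The tensor product rule then shows that $\tilde e_{i_k}$ simply lowers the leading $\widetilde{\mathcal{B}}_{i_k}$-entry of ${\bf x}_{\ge k}$ — i.e.\ decreases the $k$-th coordinate of $\Psi_{\bf i}$ by one — exactly at the steps $m = 1, \dots, M(b_0)$, where $M(b_0) := x_k - \varphi_{i_k}(\pi_{\le k-1}(b_0)) \ge 0$, while for $m > M(b_0)$ it acts on $\pi_{\le k-1}(b_0)$ instead. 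Combined with the string property of opposite Demazure crystals (Proposition~\ref{p:string property}(2)), applied to both $\mathcal{B}^{w_{\ge k}}(\nu)$ and $\mathcal{B}^{w_{\ge k+1}}(\nu)$, this should yield the clean description $\Psi_{\bf i}(\mathcal{B}^{w_{\ge k+1}}(\nu)) = \bigcup_{b_0}\{\Psi_{\bf i}(b_0) - m\,{\bf e}_k \mid 0 \le m \le M(b_0)\}$, the union running over $b_0 \in \mathcal{B}^{w_{\ge k}}(\nu)$ and ${\bf e}_k$ denoting the $k$-th unit vector of $\r^N$.

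Granting that description, the step concludes as follows. Given $b \in \mathcal{B}^{w_{\ge k+1}}(\lambda)$ and $c \in \mathcal{B}^{w_{\ge k+1}}(\mu)$, write $\Psi_{\bf i}(b) = \Psi_{\bf i}(b_0) - m\,{\bf e}_k$ and $\Psi_{\bf i}(c) = \Psi_{\bf i}(c_0) - m'\,{\bf e}_k$ with $b_0 \in \mathcal{B}^{w_{\ge k}}(\lambda)$, $c_0 \in \mathcal{B}^{w_{\ge k}}(\mu)$, $0 \le m \le M(b_0)$ and $0 \le m' \le M(c_0)$; by the inductive hypothesis there is $d_0 \in \mathcal{B}^{w_{\ge k}}(\lambda+\mu)$ with $\Psi_{\bf i}(d_0) = \Psi_{\bf i}(b_0) + \Psi_{\bf i}(c_0)$, so that $\pi_{\le k-1}(d_0)$ is, coordinatewise, the sum of $\pi_{\le k-1}(b_0)$ and $\pi_{\le k-1}(c_0)$. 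Since $\varepsilon_{i_k}$ on the crystal $\z^\infty_{{\bf j}_{\ge N+1}} \otimes \widetilde{\mathcal{B}}_{i_1} \otimes \cdots \otimes \widetilde{\mathcal{B}}_{i_{k-1}}$ is a maximum of linear functionals vanishing at the zero element, and $\langle \mathrm{wt}(\cdot), h_{i_k}\rangle$ is linear, the quantity $b_0 \mapsto \varphi_{i_k}(\pi_{\le k-1}(b_0))$ is subadditive in the first $k-1$ coordinates; combined with $x_k(\lambda+\mu) = x_k(\lambda) + x_k(\mu)$ this gives $M(d_0) \ge M(b_0) + M(c_0) \ge m + m'$. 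Therefore $\Psi_{\bf i}(b) + \Psi_{\bf i}(c) = \Psi_{\bf i}(d_0) - (m+m')\,{\bf e}_k$ lies in $\Psi_{\bf i}(\mathcal{B}^{w_{\ge k+1}}(\lambda+\mu))$ by the description above, which closes the induction. I expect the main obstacle to be the second paragraph: establishing the precise range $0 \le m \le M(b_0)$ on which $\tilde e_{i_k}$ only moves the $k$-th coordinate and, above that range, controlling the action of $\tilde e_{i_k}$ on $\pi_{\le k-1}$ — equivalently, handling the $i_k$-strings that overshoot the gap $M(b_0)$ between the consecutive opposite Demazure crystals — so that the displayed formula for $\Psi_{\bf i}(\mathcal{B}^{w_{\ge k+1}}(\nu))$ really holds. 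Everything else is a careful but routine use of the tensor product rule together with Propositions~\ref{p:polytopes for opposite} and \ref{p:string property}.
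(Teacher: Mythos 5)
Your first paragraph is exactly the reduction the paper makes: by Proposition~\ref{p:polytopes for opposite} and the additivity of $\lambda \mapsto x_l = -\langle w_{\ge l}\lambda, h_{i_l}\rangle$, it suffices to show that $\Psi_{\bf i}(\mathcal{B}^{w_{\ge k}}(\lambda)) + \Psi_{\bf i}(\mathcal{B}^{w_{\ge k}}(\mu)) \subset \Psi_{\bf i}(\mathcal{B}(\lambda+\mu))$. At that point the paper simply quotes the additivity of the Kashiwara embedding, $\Psi_{\bf i}(\mathcal{B}(\lambda)) + \Psi_{\bf i}(\mathcal{B}(\mu)) \subset \Psi_{\bf i}(\mathcal{B}(\lambda+\mu))$, which is \cite[Theorem 4.1]{FN}, and is done. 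You instead try to establish this containment from scratch by induction on $k$, and the inductive step rests on a claim that is false.

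Concretely, the ``clean description'' $\Psi_{\bf i}(\mathcal{B}^{w_{\ge k+1}}(\nu)) = \bigcup_{b_0}\{\Psi_{\bf i}(b_0) - m\,{\bf e}_k \mid 0 \le m \le M(b_0)\}$ with $M(b_0) = x_k - \varphi_{i_k}(\pi_{\le k-1}(b_0))$ does not hold, and $M(b_0)$ need not be nonnegative. Take $\mathfrak{g} = \mathfrak{sl}_3$, ${\bf i} = (1,2,1)$, $\nu = \rho$, $k = 3$, so $(x_1,x_2,x_3) = (1,2,1)$ and $\mathcal{B}^{w_{\ge 3}}(\rho) = \{b \mid a_3 = 1\}$ consists of the elements with $\Psi_{\bf i}$-images $(0,0,1), (0,1,1), (0,2,1), (1,1,1), (1,2,1)$. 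For $b_0$ with $\Psi_{\bf i}(b_0) = (0,2,1)$ one computes $\varphi_1(\pi_{\le 2}(b_0)) = \varphi_1(\mathbf{0}\otimes(0)_1\otimes(2)_2) = 2 > x_3 = 1$, so $M(b_0) = -1 < 0$ and your union does not even contain $\Psi_{\bf i}(b_0)$ itself, although $b_0 \in \mathcal{B}^{w_{\ge 3}}(\rho) \subset \mathcal{B}^{w_{\ge 4}}(\rho) = \mathcal{B}(\rho)$. Worse, for $b_0$ with $\Psi_{\bf i}(b_0) = (1,1,1)$ one has $M(b_0) = 1$ and $\tilde e_1^2 b_0$ has image $(0,1,0)$: the second application of $\tilde e_1$ lands in $\pi_{\le 2}$ and changes $a_1$ rather than $a_3$, so $(0,1,0) \in \Psi_{\bf i}(\mathcal{B}(\rho))$ is not of the form $\Psi_{\bf i}(b_0') - m\,{\bf e}_3$ for any $b_0' \in \mathcal{B}^{w_{\ge 3}}(\rho)$ with $0 \le m \le M(b_0')$. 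The string property does not repair this: the overshooting elements you flag as ``the main obstacle'' genuinely occur and are genuinely absent from your union, so the displayed formula is wrong rather than merely unverified, and the superadditivity argument built on it collapses with it. The intended (and essentially only needed) ingredient is the cited additivity theorem for $\Psi_{\bf i}$ from \cite{FN}, whose proof goes through the embedding $\mathcal{B}(\lambda+\mu) \hookrightarrow \mathcal{B}(\lambda) \otimes \mathcal{B}(\mu)$ and is not recoverable by tracking one coordinate at a time as you propose.
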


\begin{proof}
Since $-\langle w_{\ge l} (\lambda +\mu), h_{i_l} \rangle = -\langle w_{\ge l} \lambda, h_{i_l} \rangle -\langle w_{\ge l} \mu, h_{i_l} \rangle$ for $k \le l \le N$, Proposition \ref{p:polytopes for opposite} implies that it suffices to show that $\Psi_{\bf i} (\mathcal{B}^{w_{\ge k}} (\lambda)) + \Psi_{\bf i} (\mathcal{B}^{w_{\ge k}} (\mu)) \subset \Psi_{\bf i} (\mathcal{B}(\lambda + \mu))$. However, this follows immediately by the additivity of $\Psi_{\bf i}$ (see \cite[Theorem 4.1]{FN}).
\end{proof}

\section{Main result}

\subsection{Statement of the main result}

Let ${\bf i} = (i_1, \ldots, i_N) \in I^N$ be a reduced word for $w_0$. For $i \in I$, recall that $d_i$ is the number of $1 \le k \le N$ such that $i_k = i$. For $\lambda \in P_+$, we write $\lambda = \sum_{i \in I} \hat{\lambda}_i d_i \alpha_i$, and set
\begin{align*}
&{\bf x}_\lambda \coloneqq \Psi_{\bf i} (b_{w_0 \lambda}) = (x_1, \ldots, x_N),\\
&{\bf a}_\lambda \coloneqq -{\bf x}_\lambda + (\hat{\lambda}_{i_1}, \ldots, \hat{\lambda}_{i_N}).
\end{align*}
The following is the main result of this paper.

\vspace{2mm}\begin{thm}\label{t:main result}
Let ${\bf i} = (i_1, \ldots, i_N) \in I^N$ be a reduced word for $w_0$, and $\lambda \in P_+$. Assume that the Nakashima-Zelevinsky polytope $\Delta_{\bf i} (\lambda)$ is a parapolytope.
\begin{enumerate}
\item[{\rm (1)}] The polytope $\Delta_{\bf i} (\lambda)$ is a lattice polytope.
\item[{\rm (2)}] The polytope $D_{i_N} ^{(N)} \cdots D_{i_1} ^{(1)} ({\bf a}_\lambda)$ is well-defined.
\item[{\rm (3)}] The following equality holds$:$ \[D_{i_N} ^{(N)} \cdots D_{i_1} ^{(1)} ({\bf a}_\lambda) = -\Delta_{\bf i} (\lambda) + (\hat{\lambda}_{i_1}, \ldots, \hat{\lambda}_{i_N}).\] 
\end{enumerate}
\end{thm}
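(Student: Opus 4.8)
The plan is to prove assertions (1)--(3) simultaneously by induction on $k$, showing for each $0 \le k \le N$ that the coordinate slice
$\Delta_{\bf i}^{(\ge k+1)}(\lambda) := \{(a_1,\dots,a_N)\in\Delta_{\bf i}(\lambda)\mid a_l=x_l \text{ for } k+1\le l\le N\}$
is a lattice polytope and that $D_{i_k}^{(k)}\cdots D_{i_1}^{(1)}({\bf a}_\lambda)$ is well-defined and equals $-\Delta_{\bf i}^{(\ge k+1)}(\lambda)+(\hat\lambda_{i_1},\dots,\hat\lambda_{i_N})$; taking $k=N$ (vacuous slicing condition) gives all three parts of the theorem. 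Two preliminaries: a coordinate slice of a parapolytope is again a parapolytope, and the parapolytope property is preserved by ${\bf a}\mapsto-{\bf a}$ and by translation, so each $-\Delta_{\bf i}^{(\ge k+1)}(\lambda)+(\dots)$ lies in $\mathscr{P}_\Box(k+1)$ and $D_{i_{k+1}}^{(k+1)}$ may legitimately be applied to it; and by Proposition \ref{p:polytopes for opposite} the lattice points of $\Delta_{\bf i}^{(\ge k+1)}(\lambda)$ are exactly $\Psi_{\bf i}(\mathcal B^{w_{\ge k+1}}(\lambda))$. The base case $k=0$ is immediate: the empty composition fixes ${\bf a}_\lambda=-{\bf x}_\lambda+(\dots)$, while $\mathcal B^{w_0}(\lambda)=\{b_{w_0\lambda}\}$ and $\Psi_{\bf i}(b_{w_0\lambda})={\bf x}_\lambda$ give $\Delta_{\bf i}^{(\ge 1)}(\lambda)=\{{\bf x}_\lambda\}$.

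For the inductive step set $i:=i_k$ and $Q:=-\Delta_{\bf i}^{(\ge k)}(\lambda)+(\dots)$, a lattice parapolytope in $\mathscr{P}_\Box(k)$ equal, by hypothesis, to $D_{i_{k-1}}^{(k-1)}\cdots D_{i_1}^{(1)}({\bf a}_\lambda)$; I would compute $D_i^{(k)}(Q)$ slice by slice. Fix ${\bf c}\in(\r^{d_i})^\perp$ and write $Q\cap({\bf c}+\r^{d_i})={\bf c}+\Pi(\mu,\nu)$. Since $a_k$ and every $a_l$ with $l>k$, $i_l=i$ are constant on $\Delta_{\bf i}^{(\ge k)}(\lambda)$, we get $\mu_j=\nu_j$ for all $m_k\le j\le d_i$, with $\mu_{m_k}=\nu_{m_k}$ the reflected–translated value of $x_k$; and from $\Delta_{\bf i}^{(\ge k)}(\lambda)=\Delta_{\bf i}^{(\ge k+1)}(\lambda)\cap\{a_k=x_k\}$ one sees that the slice of $-\Delta_{\bf i}^{(\ge k+1)}(\lambda)+(\dots)$ over the same ${\bf c}$ agrees with ${\bf c}+\Pi(\mu,\nu)$ in every coordinate except the $m_k$-th, where the point $\mu_{m_k}$ is replaced by the interval $[\hat\lambda_i-\tilde\nu_{m_k},\hat\lambda_i-\tilde\mu_{m_k}]$, with $[\tilde\mu_{m_k},\tilde\nu_{m_k}]$ the range of $a_k$ on $\Delta_{\bf i}^{(\ge k+1)}(\lambda)$ in that slice. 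Everything thus reduces to two numerical facts: $\tilde\nu_{m_k}=x_k$ (so the lower endpoint matches $\mu_{m_k}$) and $\nu_{m_k}+l_i({\bf c})-\sum_{1\le j\le d_i}(\mu_j+\nu_j)=\hat\lambda_i-\tilde\mu_{m_k}$ (so the value $\nu'_{m_k}$ produced by Kiritchenko's operator matches the upper endpoint). As $\tilde\mu_{m_k}\le\tilde\nu_{m_k}=x_k$ these force $\nu'_{m_k}\ge\nu_{m_k}$, i.e. we are in the non-virtual (``expanding'') branch of the definition of $D_i^{(k)}$, so $D_i^{(k)}(Q)=\mathbb{I}_{-\Delta_{\bf i}^{(\ge k+1)}(\lambda)+(\dots)}$ is the characteristic function of a genuine polytope (the projections of the two sides to $(\r^{d_i})^\perp$ coincide, so no slice is empty on one side only). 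Integrality is then automatic: in the expanding branch $D_i^{(k)}$ sends a box with integer vertices over an integer ${\bf c}$ to the box with the single changed coordinate $\nu'_{m_k}=\nu_{m_k}+l_i({\bf c})-\sum(\mu_j+\nu_j)\in\z$ (as $l_i$ has integer coefficients), so $-\Delta_{\bf i}^{(\ge k+1)}(\lambda)+(\dots)$, a rational parapolytope all of whose slices over lattice points are integral boxes, is a lattice polytope.

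It remains to establish the two numerical facts, and this is the crux. From $w_{\ge k}=s_{i_k}w_{\ge k+1}$ with $s_{i_k}w_{\ge k}<w_{\ge k}$, Proposition \ref{p:properties of Demazure}(2) gives $\mathcal B^{w_{\ge k+1}}(\lambda)=\bigcup_{m\ge 0}\tilde e_{i_k}^m\mathcal B^{w_{\ge k}}(\lambda)\setminus\{0\}$; since $\tilde e_{i_k},\tilde f_{i_k}$ move only the coordinates indexed by $\{l:i_l=i_k\}$, since $a_k=x_k$ on $\mathcal B^{w_{\ge k}}(\lambda)$, and since $\tilde f_{i_k}$ never decreases such a coordinate, one deduces that $a_k\le x_k$ on $\Psi_{\bf i}(\mathcal B^{w_{\ge k+1}}(\lambda))$ and (via $b\mapsto\tilde f_{i_k}^{\varphi_{i_k}(b)}b$ together with the string property of opposite Demazure crystals, Proposition \ref{p:string property}(2)) that $\Delta_{\bf i}^{(\ge k)}(\lambda)$ and $\Delta_{\bf i}^{(\ge k+1)}(\lambda)$ have the same image in $(\r^{d_{i_k}})^\perp$; this yields $\tilde\nu_{m_k}=x_k$ and the matching of nonempty slices. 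For the second fact one substitutes the slice data $\mu_j=\nu_j$ ($j\ge m_k$) into the definition of $\nu'_{m_k}$ and uses $\langle\lambda,h_i\rangle=\sum_j c_{i,j}\hat\lambda_j d_j$ (from $\lambda=\sum_j\hat\lambda_j d_j\alpha_j$) to rewrite $\nu'_{m_k}$ in the original un-reflected coordinates, obtaining an expression for the lower endpoint of $a_k$ on the slice of $\Delta_{\bf i}(\lambda)$ that must be matched against the description of $\Delta_{\bf i}(\lambda)$ through the Kashiwara embedding and the functions $\sigma_k$ of Section 3. The main obstacle is precisely this last matching — identifying the combinatorial output $\nu'_{m_k}$ of Kiritchenko's operator with a coordinate-bound on the Nakashima--Zelevinsky polytope, and confirming that nonempty slices occur over exactly the same ${\bf c}$ on both sides so that the virtual branch and the empty case never intervene. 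Both rest on analyzing how $\tilde e_{i_k},\tilde f_{i_k}$ act through the tensor factorization $\z^\infty_{\bf j}\cong\z^\infty_{{\bf j}_{\ge N+1}}\otimes\widetilde{\mathcal B}_{i_1}\otimes\cdots\otimes\widetilde{\mathcal B}_{i_N}$, where the hypothesis that $\Delta_{\bf i}(\lambda)$ is a parapolytope is exactly what forces the relevant string data to assemble into products of intervals.
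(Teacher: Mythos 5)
Your overall architecture — induction along the reduced word, computing $D_{i_k}^{(k)}$ slice by slice over ${\bf c}\in(\r^{d_{i_k}})^\perp$, and matching the operator's output against the opposite Demazure crystal data $\Psi_{\bf i}(\mathcal{B}^{w_{\ge k+1}}(\lambda))$ — is the same as the paper's. But two of the steps you treat as routine are in fact the substance of the proof, and one of them is wrong as stated. First, your derivation of part (1): ``a rational parapolytope all of whose slices over lattice points are integral boxes is a lattice polytope'' is false. Already in $\r^2$ with $d_1=d_2=1$ (where every convex polytope is a parapolytope), the pentagon with vertices $(0,0),(1,0),(\tfrac32,\tfrac12),(1,1),(0,1)$ has all of its horizontal and vertical slices over integer values equal to the integral interval $[0,1]$, yet it is not a lattice polytope: its non-lattice vertex projects to a non-lattice value of ${\bf c}$, where your slice condition says nothing. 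The paper instead obtains latticeness from the Minkowski-sum identity $\Psi_{\bf i}(\mathcal{B}(m\lambda))=\Psi_{\bf i}(\mathcal{B}(\lambda))+\cdots+\Psi_{\bf i}(\mathcal{B}(\lambda))$ (Lemma \ref{l:additivity of lattice points}), which forces $\Delta_{\bf i}(\lambda)={\rm Conv}(\Psi_{\bf i}(\mathcal{B}(\lambda)))$; this additivity is itself a nontrivial induction and is genuinely needed. Note also that without (1) your identification of the coordinate slice $\{{\bf a}\in\Delta_{\bf i}(\lambda)\mid a_l=x_l,\ l\ge k+1\}$ with an object controlled by the crystal $\mathcal{B}^{w_{\ge k+1}}(\lambda)$ is only an identification of lattice points, so the real slices of $\Delta_{\bf i}^{(\ge k+1)}(\lambda)$ over non-lattice ${\bf c}$ are not yet determined by your combinatorial facts; the paper handles this by a scaling argument for rational ${\bf c}$ and a convexity/continuity argument for the endpoint functions over real ${\bf c}$ (second half of the proof of Lemma \ref{l:divided difference}), which your sketch omits.

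Second, your ``numerical fact'' (b) — that the value $\nu'_{m_k}=\nu_{m_k}+l_{i_k}({\bf c})-\sum_l(\mu_l+\nu_l)$ produced by Kiritchenko's operator coincides with the endpoint coming from the string extension $\mathcal{B}^{w_{\ge k+1}}(\lambda)=\bigcup_a\tilde e_{i_k}^a\mathcal{B}^{w_{\ge k}}(\lambda)\setminus\{0\}$ — is exactly the theorem's technical core, and you explicitly leave it as ``the main obstacle'' rather than proving it. The point is that the string lengths in a fixed slice ${\bf c}+\z^{d_{i_k}}$ are a priori governed by $\varepsilon_{i_k}$ and $\varphi_{i_k}$ of individual elements, and one must show that the union of these strings fills out a single box whose new edge has length precisely $L_k({\bf c})=-\langle\lambda,h_{i_k}\rangle+\sum_l(\mu_l+\nu_l)+\sum_s c_{i_k,i_s}c_s$ (and that this quantity is nonnegative, Lemma \ref{l:nonnegativity}, so that the expanding branch of $D_{i_k}^{(k)}$ applies). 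The paper proves this by combining the string property of opposite Demazure crystals with a character computation: the character of the slice of $\mathcal{B}^{w_{\ge k+1}}(\lambda)$ is $D_{i_k}$ applied to that of the slice of $\mathcal{B}^{w_{\ge k}}(\lambda)$, the latter is evaluated on a box via \cite[Proposition 6.3]{KST}, and the edge length is read off by counting terms (Lemma \ref{l:computation of parallelepiped}). Your proposal gestures at the right ingredients (tensor product rule, string property) but does not carry out this identification, so as written the argument is incomplete precisely where the assertion becomes nontrivial.
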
\vspace{2mm}

We prove Theorem \ref{t:main result} in the next subsection. In the rest of this subsection, we give some examples of $\Delta_{\bf i} (\lambda)$ which are parapolytopes.

\vspace{2mm}\begin{ex}\normalfont\label{e:main type A}
Let $G = SL_{n+1}(\c)$, and ${\bf i} = (1, 2, 1, 3, 2, 1, \ldots, n, n-1, \ldots, 1)$. Then, the Nakashima-Zelevinsky polytope $\Delta_{\bf i} (\lambda)$ is a parapolytope for all $\lambda \in P_+$ by Example \ref{e:GZ}. 
\end{ex}\vspace{2mm}

\begin{ex}[{\cite{Hos}}]\normalfont\label{e:main type BCD}
Let $G$ be of type $B_n$, $C_n$, or $D_n$. We identify the set of vertices of the Dynkin diagram with $\{1, \ldots, n\}$ as follows:
\begin{align*}
&B_n\ \begin{xy}
\ar@{-} (50,0) *++!D{1} *\cir<3pt>{};
(60,0) *++!D{2} *\cir<3pt>{}="C"
\ar@{-} "C";(65,0) \ar@{.} (65,0);(70,0)^*!U{}
\ar@{-} (70,0);(75,0) *++!D{n-1} *\cir<3pt>{}="D"
\ar@{=>} "D";(85,0) *++!D{n} *\cir<3pt>{}="E"
\end{xy}\hspace{-1mm},\\ 
&C_n\ \begin{xy}
\ar@{-} (50,0) *++!D{1} *\cir<3pt>{};
(60,0) *++!D{2} *\cir<3pt>{}="C"
\ar@{-} "C";(65,0) \ar@{.} (65,0);(70,0)^*!U{}
\ar@{-} (70,0);(75,0) *++!D{n-1} *\cir<3pt>{}="D"
\ar@{<=} "D";(85,0) *++!D{n} *\cir<3pt>{}="E"
\end{xy}\hspace{-1mm},\\ 
&D_n\ \begin{xy}
\ar@{-} (20,0) *++!D{1} *\cir<3pt>{};
(30,0) *++!D{2} *\cir<3pt>{}="B"
\ar@{-} "B";(35,0) \ar@{.} (35,0);(40,0)^*!U{}
\ar@{-} (40,0);(45,0) *++!D{n-2} *\cir<3pt>{}="C"
\ar@{-} "C";(54,4) *++!L{n-1} *\cir<3pt>{}
\ar@{-} "C";(54,-4) *++!L{n.} *\cir<3pt>{},
\end{xy}
\end{align*}
We take a reduced word ${\bf i}$ for $w_0$ to be \[{\bf i} = (n, n-1, \ldots, 1, n, n-1, \ldots, 1, \ldots, n, n-1, \ldots, 1),\] where ${\bf i} \in I^{n^2}$ if $G$ is of type $B_n$ or $C_n$, and ${\bf i} \in I^{n (n-1)}$ if $G$ is of type $D_n$. 

If $G$ is of type $B_n$, then we see by \cite[Sect.\ I\hspace{-.1em}I\hspace{-.1em}I.A]{Hos} that $\Delta_{\bf i} (\lambda)$ is identical to the set of \[(a_n ^{(n)}, a_n ^{(n-1)}, \ldots, a_n ^{(1)}, \ldots, a_1 ^{(n)}, a_1 ^{(n-1)}, \ldots, a_1 ^{(1)}) \in \r^{n^2} _{\ge 0}\] satisfying the following inequalities:
\begin{align*}
&a_1 ^{(i)} \ge a_2 ^{(i-1)} \ge \cdots \ge a_i ^{(1)}\ {\rm for}\ 2 \le i \le n-1,\\
&a_j ^{(n)} \ge a_{j+1} ^{(n-1)} \ge \cdots \ge a_n ^{(j)}\ {\rm for}\ 1 \le j \le n-1,\\
&a_j ^{(n-j+1)} \ge a_j ^{(n-j+2)} \ge \cdots \ge a_j ^{(n)}\ {\rm for}\ 2 \le j \le n,\\
&\lambda_i \ge a_j ^{(i-j+1)} - a_j ^{(i-j)}\ {\rm for}\ 1 \le j \le i \le n-1,\\
&\lambda_n \ge a_{l} ^{(n)} - 2a_{l} ^{(n - 1)} + 2 \sum_{1 \le k \le l - 1} (a_{\mu_k + k -1} ^{(n - \mu_k + 1)} - a_{\mu_k + k -1} ^{(n - \mu_k)})\ {\rm for}\ l \ge 1,\ n \ge \mu_1 > \cdots > \mu_l = 1,\\
&\lambda_n \ge -a_{l} ^{(n)} + 2\sum_{1 \le k \le l} (a_{\mu_k + k -1} ^{(n - \mu_k + 1)} - a_{\mu_k + k -1} ^{(n - \mu_k)})\ {\rm for}\ l \ge 1,\ n \ge \mu_1 > \cdots > \mu_l > 1.
\end{align*}

If $G$ is of type $C_n$, then it follows by \cite[Sect.\ I\hspace{-.1em}I\hspace{-.1em}I.B]{Hos} that $\Delta_{\bf i} (\lambda)$ is identical to the set of \[(a_n ^{(n)}, a_n ^{(n-1)}, \ldots, a_n ^{(1)}, \ldots, a_1 ^{(n)}, a_1 ^{(n-1)}, \ldots, a_1 ^{(1)}) \in \r^{n^2} _{\ge 0}\] satisfying the following inequalities:
\begin{align*}
&a_1 ^{(i)} \ge a_2 ^{(i-1)} \ge \cdots \ge a_i ^{(1)}\ {\rm for}\ 2 \le i \le n-1,\\
&2a_j ^{(n)} \ge a_{j+1} ^{(n-1)} \ge \cdots \ge a_n ^{(j)}\ {\rm for}\ 1 \le j \le n-1,\\
&a_j ^{(n-j+1)} \ge a_j ^{(n-j+2)} \ge \cdots \ge a_j ^{(n-1)} \ge 2a_j ^{(n)}\ {\rm for}\ 2 \le j \le n,\\
&\lambda_i \ge a_j ^{(i-j+1)} - a_j ^{(i-j)}\ {\rm for}\ 1 \le j \le i \le n-1,\\
&\lambda_n \ge a_{l} ^{(n)} - a_{l} ^{(n - 1)} + \sum_{1 \le k \le l - 1} (a_{\mu_k + k -1} ^{(n - \mu_k + 1)} - a_{\mu_k + k -1} ^{(n - \mu_k)})\ {\rm for}\ l \ge 1,\ n \ge \mu_1 > \cdots > \mu_l = 1,\\
&\lambda_n \ge -a_{l} ^{(n)} + \sum_{1 \le k \le l} (a_{\mu_k + k -1} ^{(n - \mu_k + 1)} - a_{\mu_k + k -1} ^{(n - \mu_k)})\ {\rm for}\ l \ge 1,\ n \ge \mu_1 > \cdots > \mu_l > 1.
\end{align*}

If $G$ is of type $D_n$, then it follows by \cite[Sect.\ I\hspace{-.1em}I\hspace{-.1em}I.C]{Hos} that $\Delta_{\bf i} (\lambda)$ is identical to the set of \[(a_{n-1} ^{(n)}, a_{n-1} ^{(n-1)}, \ldots, a_{n-1} ^{(1)}, \ldots, a_1 ^{(n)}, a_1 ^{(n-1)}, \ldots, a_1 ^{(1)}) \in \r^{n (n-1)} _{\ge 0}\] satisfying the following inequalities:
\begin{align*}
&a_1 ^{(i)} \ge a_2 ^{(i-1)} \ge \cdots \ge a_i ^{(1)}\ {\rm for}\ 2 \le i \le n-2,\\
&a_j ^{(n-1)} + a_j ^{(n)} \ge a_{j+1} ^{(n-2)} \ge a_{j+2} ^{(n-3)} \ge \cdots \ge a_{n-1} ^{(j)}\ {\rm for}\ 1 \le j \le n-2,\\
&a_j ^{(n-j)} \ge a_j ^{(n-j+1)} \ge a_j ^{(n-j+2)} \ge \cdots \ge a_j ^{(n-2)} \ge a_j ^{(n-1)} + a_j ^{(n)}\ {\rm for}\ 2 \le j \le n-1,\\
&a_1 ^{(n-1)} \ge a_2 ^{(n)} \ge a_3 ^{(n-1)} \ge a_4 ^{(n)} \ge \cdots,\\
&a_1 ^{(n)} \ge a_2 ^{(n-1)} \ge a_3 ^{(n)} \ge a_4 ^{(n-1)} \ge \cdots,\\
&\lambda_i \ge a_j ^{(i-j+1)} - a_j ^{(i-j)}\ {\rm for}\ 1 \le j \le i \le n-2,\\
&\lambda_{n-1} \ge a_{1} ^{(n-1)} - a_{1} ^{(n - 2)},\\
&\lambda_{n} \ge a_{1} ^{(n)} - a_{1} ^{(n - 2)},\\
&\lambda_{n-1} \ge \max\{-a_{2l -1} ^{(n)}, a_{2l} ^{(n)} - a_{2l} ^{(n - 2)}\} + \sum_{1 \le k \le 2l -1} (a_{\mu_k + k -1} ^{(n - \mu_k)} - a_{\mu_k + k -1} ^{(n - \mu_k -1)}),\\ 
&\lambda_{n} \ge \max\{-a_{2l -1} ^{(n-1)}, a_{2l} ^{(n-1)} - a_{2l} ^{(n - 2)}\} + \sum_{1 \le k \le 2l -1} (a_{\mu_k + k -1} ^{(n - \mu_k)} - a_{\mu_k + k -1} ^{(n - \mu_k -1)})\\
&{\rm for}\ l \ge 1,\ n-1 \ge \mu_1 > \cdots > \mu_{2l -1} > 1,\\
&\lambda_{n-1} \ge \max\{-a_{2l} ^{(n-1)}, a_{2l +1} ^{(n-1)} - a_{2l +1} ^{(n - 2)}\} + \sum_{1 \le k \le 2l} (a_{\mu_k + k -1} ^{(n - \mu_k)} - a_{\mu_k + k -1} ^{(n - \mu_k -1)}),\\
&\lambda_{n} \ge \max\{-a_{2l} ^{(n)}, a_{2l +1} ^{(n)} - a_{2l +1} ^{(n - 2)}\} + \sum_{1 \le k \le 2l} (a_{\mu_k + k -1} ^{(n - \mu_k)} - a_{\mu_k + k -1} ^{(n - \mu_k -1)}),\\
&{\rm for}\ l \ge 1,\ n-1 \ge \mu_1 > \cdots > \mu_{2l} > 1.
\end{align*} 

In all cases, the Nakashima-Zelevinsky polytopes $\Delta_{\bf i} (\lambda)$, $\lambda \in P_+$, are parapolytopes.
\end{ex}\vspace{2mm}

\begin{ex}\normalfont\label{e:main type G}
Let $G$ be of type $G_2$. We set ${\bf i} \coloneqq (1, 2, 1, 2, 1, 2)$ and ${\bf i}^{\rm op} \coloneqq (2, 1, 2, 1, 2, 1)$. By \cite[Theorem 5.1]{Nak1}, the Nakashima-Zelevinsky polytopes $\Delta_{\bf i} (\lambda)$ and $\Delta_{{\bf i}^{\rm op}} (\lambda)$ are parapolytopes for all $\lambda \in P_+$.
\end{ex}\vspace{2mm}

\subsection{Proof of Theorem \ref{t:main result}}

We set $(\z^{d_i})^\perp \coloneqq (\r^{d_i})^\perp \cap \z^N$ for $i \in I$. Since $\Psi_{\bf i}(\mathcal{B}(\lambda)) = \Delta_{\bf i} (\lambda) \cap \z^N$ by Proposition \ref{p:convexity} (2), for $i \in I$ and ${\bf c} \in (\z^{d_i})^\perp$ such that $\Psi_{\bf i} (\mathcal{B}(\lambda)) \cap ({\bf c} + \z^{d_i}) \neq \emptyset$, there uniquely exist $\mu^{(i)} ({\bf c}) = (\mu_1 ^{(i)}({\bf c}), \ldots, \mu_{d_i} ^{(i)}({\bf c}))$, $\nu^{(i)} ({\bf c}) = (\nu_1 ^{(i)}({\bf c}), \ldots, \nu_{d_i} ^{(i)}({\bf c})) \in \z^{d_i}$ such that
\begin{align*}
&\Psi_{\bf i} (\mathcal{B}(\lambda)) \cap ({\bf c} + \z^{d_i}) = {\bf c} + \Pi_\z(\mu^{(i)} ({\bf c}), \nu^{(i)} ({\bf c})),
\end{align*}
where we write \[\Pi_\z(\mu^{(i)} ({\bf c}), \nu^{(i)} ({\bf c})) \coloneqq \{(a_1 ^{(i)}, \ldots, a^{(i)} _{d_i}) \in \z^{d_i} \mid \mu_l ^{(i)}({\bf c}) \le a_l ^{(i)} \le \nu_l ^{(i)}({\bf c}),\ 1 \le l \le d_i\}.\] Note that the subset $(\mathcal{B}(\lambda) \cap \Psi_{\bf i} ^{-1}({\bf c} + \z^{d_i})) \cup \{0\}$ of $\mathcal{B}(\lambda) \cup \{0\}$ is stable under $\tilde{e}_i$ and $\tilde{f}_i$ by the crystal structure on $\z^\infty _{\bf j} \otimes R_\lambda$. For $0 \le k \le N$, $i \in I$, and ${\bf c} \in (\z^{d_i})^\perp$ such that $\Psi_{\bf i} (\mathcal{B}^{w_{\ge k+1}} (\lambda)) \cap ({\bf c} + \z^{d_i}) \neq \emptyset$, Proposition \ref{p:polytopes for opposite} implies that there uniquely exist \[\mu^{(i, k)} ({\bf c}) = (\mu^{(i, k)} _1 ({\bf c}), \ldots, \mu^{(i, k)} _{d_i} ({\bf c})),\ \nu^{(i, k)} ({\bf c}) = (\nu^{(i, k)} _1 ({\bf c}), \ldots, \nu^{(i, k)} _{d_i} ({\bf c})) \in \z^{d_i}\] such that
\begin{align*}
&\Psi_{\bf i} (\mathcal{B}^{w_{\ge k+1}} (\lambda)) \cap ({\bf c} + \z^{d_i}) = {\bf c} + \Pi_\z(\mu^{(i, k)} ({\bf c}), \nu^{(i, k)} ({\bf c})),
\end{align*}
where we define $w_{\ge N+1} \in W$ to be the identity element. For $1 \le k \le N$ and ${\bf c} = (c_s)_{1 \le s \le N;\ i_s \neq i_k} \in (\z^{d_{i_k}})^\perp$ such that $\Psi_{\bf i} (\mathcal{B}^{w_{\ge k}} (\lambda)) \cap ({\bf c} + \z^{d_{i_k}}) \neq \emptyset$, we define $L_k ({\bf c}) \in \z$ by \[L_k ({\bf c}) \coloneqq -\langle \lambda, h_{i_k} \rangle + \sum_{1 \le l \le d_{i_k}} (\mu^{(i_k, k-1)} _l({\bf c}) +\nu^{(i_k, k-1)} _l({\bf c})) + \sum_{1 \le s \le N;\ i_s \neq i_k} c_{i_k, i_s} c_s.\] 

\vspace{2mm}\begin{lem}\label{l:nonnegativity}
The integer $L_k ({\bf c})$ is nonnegative. 
\end{lem}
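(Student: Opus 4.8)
The plan is to interpret $L_k({\bf c})$ representation-theoretically and reduce the nonnegativity to the string property of opposite Demazure crystals (Proposition \ref{p:string property}). Fix $1 \le k \le N$, write $i \coloneqq i_k$, and take ${\bf c} \in (\z^{d_i})^\perp$ such that $\Psi_{\bf i}(\mathcal{B}^{w_{\ge k}}(\lambda)) \cap ({\bf c} + \z^{d_i}) \neq \emptyset$. First I would pick an element $b$ of $\mathcal{B}^{w_{\ge k}}(\lambda)$ with $\Psi_{\bf i}(b) \in {\bf c} + \z^{d_i}$; by Proposition \ref{p:polytopes for opposite}, $b$ has its last $N - k + 1$ coordinates equal to $x_k, \ldots, x_N$, while its $i$-coordinates among the first $k-1$ slots range over the box $\Pi_\z(\mu^{(i,k-1)}({\bf c}), \nu^{(i,k-1)}({\bf c}))$ after projecting to $\mathcal{B}^{w_{\ge k-1}}(\lambda)$ (note the index shift: $\mathcal{B}^{w_{\ge k}} \subset \mathcal{B}^{w_{\ge k-1}}$, and $x_k$ is the value of the $k$-th coordinate but it is the $m_k$-th $i$-slot, which is the one that gets modified when we pass from $w_{\ge k}$ to $w_{\ge k-1}$).

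The key computation is to identify the quantity $-\langle \lambda, h_i\rangle + \sum_{s;\, i_s \ne i} c_{i, i_s} c_s$ with a weight pairing. Using $\mathrm{wt}(\Psi_{\bf i}^{-1}({\bf a})) = \lambda - \sum_l a_l \alpha_{i_l}$ and $\langle \alpha_j, h_i\rangle = c_{i,j}$, one finds that for any $b' \in \mathcal{B}(\lambda)$ with $\Psi_{\bf i}(b') = {\bf a} \in {\bf c} + \z^{d_i}$,
\[
\langle \mathrm{wt}(b'), h_i \rangle = \langle \lambda, h_i\rangle - \sum_{1 \le l \le d_i} c_{i,i} a_l^{(i)} - \sum_{s;\, i_s \ne i} c_{i,i_s} c_s = \langle \lambda, h_i\rangle - 2\sum_l a_l^{(i)} - \sum_{s;\, i_s \ne i} c_{i,i_s} c_s,
\]
since $c_{i,i} = 2$. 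Hence $L_k({\bf c}) = \sum_l (\mu_l^{(i,k-1)}({\bf c}) + \nu_l^{(i,k-1)}({\bf c})) - \sum_l a_l^{(i)} - \sum_l a_l^{(i)} - \langle \mathrm{wt}(b'), h_i\rangle$ for a suitable choice, but more cleanly: evaluating at the \emph{highest} element of the relevant $i$-string chain (the one with all $i$-coordinates equal to $\mu^{(i,k-1)}$) and at the \emph{lowest} (all equal to $\nu^{(i,k-1)}$) and adding, $L_k({\bf c})$ becomes $\varepsilon_i + \varphi_i$ evaluated on an appropriate element of $\mathcal{B}^{w_{\ge k-1}}(\lambda)$, combined over the box — concretely, $L_k({\bf c}) = \varphi_i(b_{\mathrm{low}}) - \varepsilon_i(b_{\mathrm{high}}) + (\text{box width contributions})$, which after the tensor-product bookkeeping of Section 3 collapses to something manifestly $\ge 0$ because $\varphi_i$ and $\varepsilon_i$ are nonnegative on $\mathcal{B}(\lambda)$.

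The cleanest route, and the one I would commit to, is: show $L_k({\bf c}) = \varphi_i\!\left(\pi_{\le k-1}(b)\right)$ evaluated in $\z_{\bf j}^\infty \otimes \widetilde{\mathcal{B}}_{i_1} \otimes \cdots \otimes \widetilde{\mathcal{B}}_{i_{k-1}}$ for the element $b \in \mathcal{B}^{w_{\ge k}}(\lambda)$ with $i$-coordinates maximal (equal to $\nu^{(i,k-1)}({\bf c})$), using Lemma \ref{l:Nakashima} to control the tensor factor ${\bf x}_{\ge k}$; the point is that $\varphi_i$ on the crystal $\z^\infty_{\bf j}$ (hence on $\mathcal{B}(\lambda)$) equals $\varepsilon_i + \langle \mathrm{wt}, h_i\rangle$ and is $\ge 0$, and the maximality of the $\nu$-coordinates together with the string property (Proposition \ref{p:string property}(2)) forces $\tilde f_i b = 0$ within the $i$-string of $\mathcal{B}^{w_{\ge k}}(\lambda)$, so the residual $\varphi_i$ is precisely $L_k({\bf c}) \ge 0$. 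The main obstacle I anticipate is the index-shift bookkeeping between $w_{\ge k}$ and $w_{\ge k-1}$ and keeping straight which $i$-slot is the "$m_k$-th" one being expanded by $D_i^{(k)}$ — i.e., matching the combinatorial data $(\mu^{(i,k-1)}, \nu^{(i,k-1)})$ to the string lengths $\varepsilon_i, \varphi_i$ of the correct tensor factor. Once that dictionary is set up, nonnegativity is immediate from $\varepsilon_i, \varphi_i \ge 0$ on $\mathcal{B}(\lambda)$.
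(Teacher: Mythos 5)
Your opening step is exactly the paper's: from $\mathrm{wt}(b')=\lambda-\sum_l a_l\alpha_{i_l}$ and $c_{i,i}=2$ one gets, for the element $b_{\rm high}$ of $\mathcal{B}^{w_{\ge k}}(\lambda)\cap\Psi_{\bf i}^{-1}({\bf c}+\z^{d_{i_k}})$ with coordinates $\mu^{(i_k,k-1)}({\bf c})$,
\[
L_k({\bf c}) \;=\; -\langle\mathrm{wt}(b_{\rm high}),h_{i_k}\rangle+\sum_{1\le l\le d_{i_k}}\bigl(\nu_l^{(i_k,k-1)}({\bf c})-\mu_l^{(i_k,k-1)}({\bf c})\bigr).
\]
But none of the three ways you propose to finish actually closes. (i) Adding the evaluations at $b_{\rm high}$ and at $b_{\rm low}$ (coordinates $\nu^{(i_k,k-1)}({\bf c})$) gives $2L_k({\bf c})=(\varepsilon_{i_k}-\varphi_{i_k})(b_{\rm high})+(\varepsilon_{i_k}-\varphi_{i_k})(b_{\rm low})$, a difference of nonnegative quantities, not an expression that is manifestly $\ge 0$. (ii) The identity you commit to, $L_k({\bf c})=\varphi_{i_k}(\pi_{\le k-1}(b_{\rm low}))$, is false: already for $\mathfrak{g}=\mathfrak{sl}_2$ with $N=k=1$ the right-hand side is $\varphi_1((\ldots,0,0))=0$ while $L_1({\bf c})=-\langle\lambda,h_1\rangle+2x_1=\langle\lambda,h_1\rangle$. (iii) Your correct observation that the maximality of the $\nu$-coordinates forces $\tilde{f}_{i_k}b_{\rm low}=0$, i.e.\ $\varphi_{i_k}(b_{\rm low})=0$, turns the $b_{\rm low}$-evaluation into $L_k({\bf c})=\varepsilon_{i_k}(b_{\rm low})-\sum_l(\nu_l^{(i_k,k-1)}({\bf c})-\mu_l^{(i_k,k-1)}({\bf c}))$, and the required lower bound on $\varepsilon_{i_k}(b_{\rm low})$ is nowhere supplied; it does not come for free, because $\mathcal{B}^{w_{\ge k}}(\lambda)$ is stable under $\tilde{f}_{i_k}$ but \emph{not} under $\tilde{e}_{i_k}$, so there is no direct control of $\varepsilon_{i_k}$ on it.

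The gap is closed by working from the $b_{\rm high}$ side, where the asymmetry helps rather than hurts: by Proposition \ref{p:properties of Demazure} (1) the elements $\tilde{f}_{i_k}^{\,j}b_{\rm high}$, $0\le j\le\varphi_{i_k}(b_{\rm high})$, all lie in $\mathcal{B}^{w_{\ge k}}(\lambda)\cap\Psi_{\bf i}^{-1}({\bf c}+\z^{d_{i_k}})={\bf c}+\Pi_\z(\mu^{(i_k,k-1)}({\bf c}),\nu^{(i_k,k-1)}({\bf c}))$, and each application of $\tilde{f}_{i_k}$ raises the sum of the $i_k$-coordinates by one starting from the bottom corner $\mu^{(i_k,k-1)}({\bf c})$, whence $\varphi_{i_k}(b_{\rm high})\le\sum_l(\nu_l^{(i_k,k-1)}({\bf c})-\mu_l^{(i_k,k-1)}({\bf c}))$. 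Combined with $\varepsilon_{i_k}(b_{\rm high})\ge 0$ and $\langle\mathrm{wt}(b_{\rm high}),h_{i_k}\rangle=\varphi_{i_k}(b_{\rm high})-\varepsilon_{i_k}(b_{\rm high})$, the displayed identity gives $L_k({\bf c})\ge 0$. This one-line estimate is what your committed route should be replaced by.
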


\begin{proof}
Let $b_{\rm high} \in \mathcal{B}^{w_{\ge k}} (\lambda) \cap \Psi_{\bf i} ^{-1}({\bf c} + \z^{d_{i_k}})$ be the unique element such that $\Psi_{\bf i}(b_{\rm high}) = {\bf c} + \mu^{(i_k, k-1)} ({\bf c})$. Then, we see that \[{\rm wt}(b_{\rm high}) = \lambda -\sum_{1 \le l \le d_{i_k}} \mu^{(i_k, k-1)} _l({\bf c}) \alpha_{i_k} -\sum_{1 \le s \le N;\ i_s \neq i_k} c_s \alpha_{i_s},\] and hence that 
\begin{align*}
\langle{\rm wt}(b_{\rm high}), h_{i_k}\rangle &= \langle \lambda, h_{i_k} \rangle -2\sum_{1 \le l \le d_{i_k}} \mu^{(i_k, k-1)} _l({\bf c}) -\sum_{1 \le s \le N;\ i_s \neq i_k} c_{i_k, i_s} c_s.
\end{align*}
From this, it follows that \[L_k({\bf c}) = -\langle{\rm wt}(b_{\rm high}), h_{i_k}\rangle + \sum_{1 \le l \le d_{i_k}} (\nu^{(i_k, k-1)} _l({\bf c}) -\mu^{(i_k, k-1)} _l({\bf c})).\] Since we have $\tilde{f}_{i_k} ^{\varphi_{i_k}(b_{\rm high})} b_{\rm high} \in \mathcal{B}^{w_{\ge k}} (\lambda) \cap \Psi_{\bf i} ^{-1} ({\bf c} + \z^{d_{i_k}})$ by Proposition \ref{p:properties of Demazure} (1), the equality
\begin{align*}
\Psi_{\bf i} (\mathcal{B}^{w_{\ge k}} (\lambda)) \cap ({\bf c} + \z^{d_{i_k}}) = {\bf c} + \Pi_\z(\mu^{(i_k, k-1)} ({\bf c}), \nu^{(i_k, k-1)} ({\bf c}))
\end{align*}
implies that $\varphi_{i_k}(b_{\rm high}) \le \sum_{1 \le l \le d_{i_k}} (\nu^{(i_k, k-1)} _l({\bf c}) -\mu^{(i_k, k-1)} _l({\bf c}))$, and hence that \[\langle {\rm wt}(b_{\rm high}), h_{i_k}\rangle = \varphi_{i_k}(b_{\rm high}) - \varepsilon_{i_k}(b_{\rm high}) \le \sum_{1 \le l \le d_{i_k}} (\nu^{(i_k, k-1)} _l({\bf c}) -\mu^{(i_k, k-1)} _l({\bf c})).\] This proves the lemma.
\end{proof}

We set \[\{s_1 ^{(k)} < \cdots < s_{d_{i_k}} ^{(k)}\} \coloneqq \{1 \le s \le N \mid i_s = i_k\}\] for $1 \le k \le N$, and define $1 \le m_k \le d_{i_k}$ by $s^{(k)} _{m_k} = k$. 

\vspace{2mm}\begin{lem}\label{l:computation of parallelepiped}
For ${\bf c} \in (\z^{d_{i_k}})^\perp$, it follows that $\Psi_{\bf i}(\mathcal{B}^{w_{\ge k}}(\lambda)) \cap ({\bf c} + \z^{d_{i_k}}) \neq \emptyset$ if and only if $\Psi_{\bf i}(\mathcal{B}^{w_{\ge k +1}}(\lambda)) \cap ({\bf c} + \z^{d_{i_k}}) \neq \emptyset$. In this case, the following equalities hold$:$
\begin{align*}
&\mu_l ^{(i_k, k)}({\bf c}) = \mu_l ^{(i_k, k-1)}({\bf c}),\ \nu_l ^{(i_k, k)}({\bf c}) = \nu_l ^{(i_k, k-1)}({\bf c})\ {\it for}\ 1 \le l < m_k,\\ 
&\mu_{m_k} ^{(i_k, k)}({\bf c}) = x_k -L_k({\bf c}),\ \nu_{m_k} ^{(i_k, k)}({\bf c}) = x_k,\ {\it and}\\
&\mu_l ^{(i_k, k)}({\bf c}) = \nu_l ^{(i_k, k)}({\bf c}) = x_{s_l ^{(k)}}\ {\it for}\ m_k < l \le d_{i_k}.
\end{align*}
\end{lem}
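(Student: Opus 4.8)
The plan is to derive all three assertions from two inputs: the Demazure relation
$\mathcal{B}^{w_{\ge k+1}}(\lambda)=\bigcup_{m\ge 0}\tilde{e}_{i_k}^{m}\mathcal{B}^{w_{\ge k}}(\lambda)\setminus\{0\}$, which holds by Proposition~\ref{p:properties of Demazure}~(2) since $s_{i_k}w_{\ge k}=w_{\ge k+1}<w_{\ge k}$, together with the explicit description of $\Psi_{\bf i}(\mathcal{B}^{w_{\ge k}}(\lambda))$ from Proposition~\ref{p:polytopes for opposite}. First I would record the common tensor-theoretic input: from the proof of Proposition~\ref{p:polytopes for opposite} one has $\mathcal{B}^{w_{\ge k}}(\lambda)=\{b\mid\pi_{\ge k}(b)={\bf x}_{\ge k}\}$, and from Theorem~\ref{t:Nakashima} one checks $\pi_{\ge k}(b)=(a_k)_{i_k}\otimes\pi_{\ge k+1}(b)$ for $b$ with $\Psi_{\bf i}(b)=(a_1,\dots,a_N)$, as well as ${\bf x}_{\ge k}=(x_k)_{i_k}\otimes{\bf x}_{\ge k+1}$ (with the convention ${\bf x}_{\ge N+1}=r_\lambda$ when $k=N$). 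Hence for $b\in\mathcal{B}^{w_{\ge k+1}}(\lambda)$ one has $\widetilde{\Psi}_{\bf j}(b)=(\pi_{\le k-1}(b)\otimes(a_k)_{i_k})\otimes{\bf x}_{\ge k+1}$, and $b$ lies in $\mathcal{B}^{w_{\ge k}}(\lambda)$ if and only if $a_k=x_k$. Throughout write $B_k:=\mathcal{B}^{w_{\ge k}}(\lambda)\cap\Psi_{\bf i}^{-1}({\bf c}+\mathbb{Z}^{d_{i_k}})$.

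Next I would settle the nonemptiness equivalence and the assertions for $l\ne m_k$. For the nontrivial implication: given $b''\in B_{k+1}$, write $b''=\tilde{e}_{i_k}^{m}b'$ with $b'\in\mathcal{B}^{w_{\ge k}}(\lambda)$ by the Demazure relation; then $b'=\tilde{f}_{i_k}^{m}b''$, and since $\tilde{f}_{i_k}$ alters only $i_k$-coordinates, $b'$ lies in the same slice, so $B_k\ne\emptyset$. Now assume $B_k\ne\emptyset$. Since $s_l^{(k)}\ge k+1$ for $l>m_k$, Proposition~\ref{p:polytopes for opposite} forces $a_{s_l^{(k)}}=x_{s_l^{(k)}}$ on all of $\mathcal{B}^{w_{\ge k+1}}(\lambda)$, which gives the equalities for $l>m_k$. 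Moreover $\Psi_{\bf i}(B_k)=\{{\bf a}\in\Psi_{\bf i}(B_{k+1})\mid a_k=x_k\}$ is a nonempty slice of the box $\Psi_{\bf i}(B_{k+1})={\bf c}+\Pi_{\mathbb{Z}}(\mu^{(i_k,k)}({\bf c}),\nu^{(i_k,k)}({\bf c}))$; comparing this with $\Psi_{\bf i}(B_k)={\bf c}+\Pi_{\mathbb{Z}}(\mu^{(i_k,k-1)}({\bf c}),\nu^{(i_k,k-1)}({\bf c}))$ yields $x_k\in[\mu_{m_k}^{(i_k,k)}({\bf c}),\nu_{m_k}^{(i_k,k)}({\bf c})]$, the equalities $\mu_l^{(i_k,k)}=\mu_l^{(i_k,k-1)}$ and $\nu_l^{(i_k,k)}=\nu_l^{(i_k,k-1)}$ for $l<m_k$, and $\mu_{m_k}^{(i_k,k-1)}=\nu_{m_k}^{(i_k,k-1)}=x_k$. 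Together with the previous step this also proves the nonemptiness equivalence.

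It remains to compute $\nu_{m_k}^{(i_k,k)}({\bf c})$ and $\mu_{m_k}^{(i_k,k)}({\bf c})$. For the former I would show $a_k(b)\le x_k$ for every $b\in\mathcal{B}^{w_{\ge k+1}}(\lambda)$: writing $b=\tilde{e}_{i_k}^{m}b'$ with $b'\in\mathcal{B}^{w_{\ge k}}(\lambda)$, every intermediate element $\tilde{e}_{i_k}^{j}b'$ lies in $\mathcal{B}^{w_{\ge k+1}}(\lambda)$, so in the decomposition $\widetilde{\Psi}_{\bf j}=(\pi_{\le k-1}\otimes(a_k)_{i_k})\otimes{\bf x}_{\ge k+1}$ the operator $\tilde{e}_{i_k}$ cannot act on the ${\bf x}_{\ge k+1}$-factor (this would alter $\pi_{\ge k+1}$), hence acts on $\pi_{\le k-1}\otimes(a_k)_{i_k}$ and does not increase $a_k$; iterating from $a_k(b')=x_k$ gives $a_k(b)\le x_k$, so $\nu_{m_k}^{(i_k,k)}({\bf c})=x_k$. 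For $\mu_{m_k}^{(i_k,k)}({\bf c})$, let $b_{\rm high}\in B_k$ be the unique element with $\Psi_{\bf i}(b_{\rm high})={\bf c}+\mu^{(i_k,k-1)}({\bf c})$, as in the proof of Lemma~\ref{l:nonnegativity}, where it is shown that $L_k({\bf c})=-\langle{\rm wt}(b_{\rm high}),h_{i_k}\rangle+\sum_l(\nu_l^{(i_k,k-1)}-\mu_l^{(i_k,k-1)})=\varepsilon_{i_k}(b_{\rm high})-\varphi_{i_k}(b_{\rm high})+\sum_l(\nu_l^{(i_k,k-1)}-\mu_l^{(i_k,k-1)})$. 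Since the transverse coordinates of $b_{\rm high}$ sit at the minimum of $B_{k+1}$, applying $\tilde{e}_{i_k}$ repeatedly to $b_{\rm high}$ (which stays in $\mathcal{B}^{w_{\ge k+1}}(\lambda)$ and, by the argument just given, never touches ${\bf x}_{\ge k+1}$) can only decrease $a_k$, so $a_k(\tilde{e}_{i_k}^{j}b_{\rm high})=x_k-j$ for $0\le j\le\varepsilon_{i_k}(b_{\rm high})$; symmetrically, applying $\tilde{f}_{i_k}$ keeps $b_{\rm high}$ in $\mathcal{B}^{w_{\ge k}}(\lambda)$ and hence only raises transverse coordinates. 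Using the parallelepiped structure of $B_k$ and $B_{k+1}$ I would then identify $\tilde{e}_{i_k}^{\varepsilon_{i_k}(b_{\rm high})}b_{\rm high}$ with the minimum corner of $B_{k+1}$, whence $\mu_{m_k}^{(i_k,k)}({\bf c})=x_k-\varepsilon_{i_k}(b_{\rm high})$, and establish $\varphi_{i_k}(b_{\rm high})=\sum_l(\nu_l^{(i_k,k-1)}-\mu_l^{(i_k,k-1)})$; substituting into the displayed formula gives $\varepsilon_{i_k}(b_{\rm high})=L_k({\bf c})$ and hence $\mu_{m_k}^{(i_k,k)}({\bf c})=x_k-L_k({\bf c})$.

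The hard part will be exactly this last step: showing that the $i_k$-string through $b_{\rm high}$ reaches the minimum corner of $B_{k+1}$ at its top and that $\varphi_{i_k}(b_{\rm high})$ saturates the transverse box of $B_k$. These are "fullness" statements for the parallelepipeds that do not follow formally from the box structure, and I expect to need one more careful application of the Demazure relation (or of the string property, Proposition~\ref{p:string property}) to rule out an element of $B_{k+1}$ with strictly smaller $a_k$ at a fixed transverse position: such an element would reduce under $\tilde{f}_{i_k}$ to an element of $B_k$ whose transverse coordinates dominate $\mu^{(i_k,k-1)}({\bf c})$, and tracing its $i_k$-string, together with the injectivity of $\Psi_{\bf i}$, forces it onto the string of $b_{\rm high}$, contradicting the computed string length. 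Once this bookkeeping is in place, all three assertions of the lemma follow.
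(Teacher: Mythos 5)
Your reduction of the lemma to the single identity $\widetilde L_k({\bf c}):=x_k-\mu_{m_k}^{(i_k,k)}({\bf c})=L_k({\bf c})$ is correct, and everything up to that point (the nonemptiness equivalence via Proposition \ref{p:properties of Demazure}~(2), the cases $l\neq m_k$ via Proposition \ref{p:polytopes for opposite} and the slicing argument, and $\nu_{m_k}^{(i_k,k)}({\bf c})=x_k$) matches the paper. But the gap you flag at the end is not a bookkeeping issue that one more application of the Demazure relation will close: the two ``fullness'' claims you propose to establish are false in general, not merely unproven. Take $\mathfrak{g}=\mathfrak{sl}_3$, ${\bf i}=(1,2,1)$, $\lambda=\rho$, $k=3$ (so $i_k=1$, $m_k=2$), and ${\bf c}$ the slice $a_2=1$. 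Then $\Psi_{\bf i}(\mathcal{B}(\rho))=\{(0,0,0),(0,0,1),(0,1,1),(0,2,1),(1,2,1),(0,1,0),(1,1,0),(1,1,1)\}$ and ${\bf x}_\rho=(1,2,1)$; in coordinates $(a_1^{(1)},a_2^{(1)})=(a_1,a_3)$ one finds $\Psi_{\bf i}(B_3)$ is the box $[0,1]\times\{1\}$ and $\Psi_{\bf i}(B_4)$ is $[0,1]\times[0,1]$, so $\widetilde L_3({\bf c})=L_3({\bf c})=1$ and $\sum_l(\nu_l^{(1,2)}-\mu_l^{(1,2)})=1$. However $b_{\rm high}=\Psi_{\bf i}^{-1}(0,1,1)=\tilde f_2\tilde f_1 b_\rho$ is the zero-weight element of the adjoint crystal that is a $1$-string singleton: $\varepsilon_1(b_{\rm high})=\varphi_1(b_{\rm high})=0$. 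Hence $\varepsilon_{i_k}(b_{\rm high})\neq\widetilde L_k({\bf c})$, $\varphi_{i_k}(b_{\rm high})\neq\sum_l(\nu_l-\mu_l)$, and $\tilde e_1^{\varepsilon_1(b_{\rm high})}b_{\rm high}=b_{\rm high}$ is not the minimum corner $(0,1,0)$ of $B_4$; that corner is reached as $\tilde e_1^2$ of the \emph{maximum} corner $(1,1,1)$ of $B_3$. The two failures cancel in the identity $L_k({\bf c})=\varepsilon_{i_k}(b_{\rm high})-\varphi_{i_k}(b_{\rm high})+\sum_l(\nu_l-\mu_l)$, which is why your plan appears to land on the right answer, but neither ingredient holds.

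The structural reason is the tensor product rule: writing $\widetilde\Psi_{\bf j}(b_{\rm high})=u\otimes v$ with $u$ carrying the free coordinates $l<m_k$ and $v$ the pinned tail, stability of $B_k$ under $\tilde f_{i_k}$ forces $\varphi_{i_k}(v)=0$ while $\varepsilon_{i_k}(v)$ is typically positive, so $\varphi_{i_k}(b_{\rm high})=\max\{0,\sum_l(\nu_l-\mu_l)-\varepsilon_{i_k}(v)\}$ and $\varepsilon_{i_k}(b_{\rm high})=\max\{0,\varepsilon_{i_k}(v)-\sum_l(\nu_l-\mu_l)\}$; saturation fails as soon as $\varepsilon_{i_k}(v)>0$, which happens precisely when $\widetilde L_k({\bf c})>0$. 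The minimum corner of $B_{k+1}$ is reached by the union of the $i_k$-strings through \emph{all} of $B_k$, not by the single string through $b_{\rm high}$, and quantifying how far that union descends is the whole difficulty. This is exactly what the paper's proof circumvents: the string property (Proposition \ref{p:string property}~(2)) gives the character identity ${\rm ch}(B_{k+1})=D_{i_k}({\rm ch}(B_k))$ for the Demazure operator on $\z[P]$, the Kiritchenko--Smirnov--Timorin formula computes $D_{i_k}$ of the character of a box as the character of the box expanded by exactly $L_k({\bf c})$ in the $m_k$-th direction, and comparing cardinalities yields $\widetilde L_k({\bf c})=L_k({\bf c})$. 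To repair your argument you would have to replace the single-string analysis by some such global count (or compute $\varepsilon_{i_k},\varphi_{i_k}$ of all corners via the tensor product decomposition of Section 5, which amounts to reproving that character formula).
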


\begin{proof}
Since
\begin{equation}\label{e:recursion of opposite 1}
\begin{aligned}
&\mathcal{B}^{w_{\ge k+1}} (\lambda) = \bigcup_{a \ge 0} \tilde{e}_{i_k} ^a \mathcal{B}^{w_{\ge k}} (\lambda) \setminus \{0\}
\end{aligned}
\end{equation}
by Proposition \ref{p:properties of Demazure} (2), the first assertion follows immediately by the crystal structure on $\z^\infty _{\bf j} \otimes R_\lambda$. By Proposition \ref{p:polytopes for opposite}, we have $\mu_l ^{(i_k, k)}({\bf c}) = \nu_l ^{(i_k, k)}({\bf c}) = x_{s_l ^{(k)}}$ for $m_k < l \le d_{i_k}$, and
\begin{equation}\label{e:recursion of opposite 2}
\begin{aligned}
&\Psi_{\bf i}(\mathcal{B}^{w_{\ge k}} (\lambda)) = \{{\bf a} \in \Psi_{\bf i}(\mathcal{B}^{w_{\ge k+1}} (\lambda)) \mid a_k = x_k\}.
\end{aligned}
\end{equation}
By \eqref{e:recursion of opposite 1} and \eqref{e:recursion of opposite 2}, there exists $\widetilde{L}_k({\bf c}) \in \z_{\ge 0}$ such that 
\begin{align*}
&\mu_l ^{(i_k, k)}({\bf c}) = \mu_l ^{(i_k, k-1)}({\bf c}),\ \nu_l ^{(i_k, k)}({\bf c}) = \nu_l ^{(i_k, k-1)}({\bf c})\ {\rm for}\ 1 \le l < m_k,\ {\rm and}\\ 
&\mu_{m_k} ^{(i_k, k)}({\bf c}) = x_k -\widetilde{L}_k({\bf c}),\ \nu_{m_k} ^{(i_k, k)}({\bf c}) = x_k.
\end{align*}
Hence for the second assertion of the lemma, it suffices to show that $\widetilde{L}_k({\bf c}) = L_k({\bf c})$. For $i \in I$, let us consider the Demazure operator $D_i \colon \z[P] \rightarrow \z[P]$ given by \[D_i (e^\lambda) \coloneqq \frac{e^\lambda -e^{s_i(\lambda) +\alpha_i}}{1-e^{\alpha_i}}\] for $\lambda \in P$. For $\lambda \in P$ with $\langle\lambda, h_i\rangle \le 0$, we have \[D_i(e^\lambda) = e^\lambda + e^{\lambda +\alpha_i} + \cdots + e^{s_i(\lambda)}.\] By the string property of $\mathcal{B}^{w_{\ge k}} (\lambda)$ (Proposition \ref{p:string property} (2)) and the equality \[\mathcal{B}^{w_{\ge k+1}} (\lambda) \cap \Psi_{\bf i} ^{-1}({\bf c} + \z^{d_{i_k}}) = \bigcup_{a \ge 0} \tilde{e}_{i_k} ^a (\mathcal{B}^{w_{\ge k}} (\lambda) \cap \Psi_{\bf i} ^{-1}({\bf c} + \z^{d_{i_k}})) \setminus \{0\},\] we deduce that 
\begin{equation}\label{character 1}
\begin{aligned}
{\rm ch}(\mathcal{B}^{w_{\ge k+1}} (\lambda) \cap \Psi_{\bf i} ^{-1}({\bf c} + \z^{d_{i_k}})) = D_{i_k}({\rm ch}(\mathcal{B}^{w_{\ge k}} (\lambda) \cap \Psi_{\bf i} ^{-1}({\bf c} + \z^{d_{i_k}}))). 
\end{aligned}
\end{equation}
Set $\Pi_1 \coloneqq \Pi_\z (\mu^{(i_k, k)}({\bf c}), \nu^{(i_k, k)}({\bf c}))$ and $\Pi_2 \coloneqq \Pi_\z (\hat{\mu}^{(i_k, k)}({\bf c}), \nu^{(i_k, k)}({\bf c}))$, where we define $\hat{\mu}^{(i_k, k)}({\bf c})$ by replacing $\mu_{m_k} ^{(i_k, k-1)}({\bf c}) = x_k$ in $\mu^{(i_k, k-1)}({\bf c})$ by $x_k -L_k({\bf c})$. Then, it follows that 
\begin{equation}\label{character 2}
\begin{aligned}
D_{i_k}\left({\rm ch}(\mathcal{B}^{w_{\ge k}} (\lambda) \cap \Psi_{\bf i} ^{-1}({\bf c} + \z^{d_{i_k}}))\right) = e^{\lambda -\sum_{1 \le s \le N;\ i_s \neq i_k} c_s \alpha_{i_s}} \sum_{(a_1 ^{(i)}, \ldots, a_{d_{i_k}} ^{(i)}) \in \Pi_2} e^{(a^{(i)} _1 + \cdots + a^{(i)} _{d_{i_k}}) \alpha_{i_k}};
\end{aligned}
\end{equation}
see \cite[Proposition 6.3]{KST}. From the equalities \eqref{character 1} and \eqref{character 2}, we see that 
\begin{align*}
\sum_{(a_1 ^{(i)}, \ldots, a_{d_{i_k}} ^{(i)}) \in \Pi_1} e^{(a^{(i)} _1 + \cdots + a^{(i)} _{d_{i_k}}) \alpha_{i_k}} = \sum_{(a_1 ^{(i)}, \ldots, a_{d_{i_k}} ^{(i)}) \in \Pi_2} e^{(a^{(i)} _1 + \cdots + a^{(i)} _{d_{i_k}}) \alpha_{i_k}}.
\end{align*}
By comparing the number of terms, we deduce that $\widetilde{L}_k({\bf c}) = L_k({\bf c})$. This proves the lemma.
\end{proof}

For subsets $X, Y \subset \r^N$, we define $X + Y$ to be the Minkowski sum: \[X + Y \coloneqq \{x + y \mid x \in X,\ y \in Y\}.\]

\vspace{2mm}\begin{lem}\label{l:additivity of lattice points}
Let ${\bf i} = (i_1, \ldots, i_N) \in I^N$ be a reduced word for $w_0$, and $\lambda_1, \lambda_2 \in P_+$. Assume that the polytopes $\Delta_{\bf i}(\lambda_1), \Delta_{\bf i}(\lambda_2)$, and $\Delta_{\bf i}(\lambda_1 + \lambda_2)$ are all parapolytopes. Then, the following equality holds for all $1 \le k \le N+1$$:$ \[\Psi_{\bf i}(\mathcal{B}^{w_{\ge k}}(\lambda_1 + \lambda_2)) = \Psi_{\bf i}(\mathcal{B}^{w_{\ge k}}(\lambda_1)) + \Psi_{\bf i}(\mathcal{B}^{w_{\ge k}}(\lambda_2)).\]
\end{lem}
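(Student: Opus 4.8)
The plan is to prove the statement by descending induction on $k$, starting from $k = N+1$ where both sides equal $\{(x_1,\dots,x_N)\}$ (the single lattice point corresponding to $b_{w_0\lambda}$, by Theorem \ref{t:Nakashima} applied to $\lambda_1$, $\lambda_2$, and $\lambda_1+\lambda_2$, together with the additivity $x_l(\lambda_1+\lambda_2) = x_l(\lambda_1) + x_l(\lambda_2)$ coming from $-\langle w_{\ge l}(\lambda_1+\lambda_2), h_{i_l}\rangle = -\langle w_{\ge l}\lambda_1, h_{i_l}\rangle - \langle w_{\ge l}\lambda_2, h_{i_l}\rangle$). The inclusion $\supseteq$ for every $k$ is already available from Corollary \ref{c:additivity of opposite}, so the whole content is the reverse inclusion $\subseteq$, which I will extract fibre by fibre over the subspaces ${\bf c} + \z^{d_{i_k}}$.

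The key step is the inductive passage from $k+1$ to $k$. Fix ${\bf c} \in (\z^{d_{i_k}})^\perp$. By the induction hypothesis at level $k+1$, the fibre $\Psi_{\bf i}(\mathcal{B}^{w_{\ge k+1}}(\lambda_1+\lambda_2)) \cap ({\bf c} + \z^{d_{i_k}})$ is the Minkowski sum of the corresponding fibres for $\lambda_1$ and $\lambda_2$; since all three polytopes are parapolytopes, each such fibre is (a translate of) a box $\Pi_\z(\mu^{(i_k,k)}, \nu^{(i_k,k)})$, and one needs the elementary fact that the Minkowski sum of two coordinate boxes is the coordinate box obtained by adding the $\mu$'s and the $\nu$'s coordinatewise. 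Combined with the description in Lemma \ref{l:computation of parallelepiped} — which says that passing from level $k$ to level $k+1$ only changes the $m_k$-th interval, replacing $[x_k - L_k({\bf c}), x_k]$ by a point, while the other intervals are determined — this should let me compare the two levels. Concretely: the $m_k$-th upper endpoint is $x_k$, which is additive; the $m_k$-th lower endpoint is $x_k - L_k({\bf c})$, so I must check $L_k$ is additive in $\lambda$, i.e. $L_k({\bf c}; \lambda_1+\lambda_2) = L_k({\bf c}; \lambda_1) + L_k({\bf c}; \lambda_2)$. But $L_k({\bf c}) = -\langle {\rm wt}(b_{\rm high}), h_{i_k}\rangle + \sum_l(\nu_l^{(i_k,k-1)} - \mu_l^{(i_k,k-1)})$ by the computation in the proof of Lemma \ref{l:nonnegativity}, and the $\nu_l - \mu_l$ are the "widths" of the level-$(k-1)$ boxes, which are additive by the induction hypothesis at level $k$ applied fibrewise — wait, that is circular, so instead I will read additivity of $L_k$ directly off its definition $L_k({\bf c}) = -\langle\lambda, h_{i_k}\rangle + \sum_l(\mu_l^{(i_k,k-1)}({\bf c}) + \nu_l^{(i_k,k-1)}({\bf c})) + \sum_{s} c_{i_k,i_s}c_s$, where additivity of the term $\sum_l(\mu_l^{(i_k,k-1)} + \nu_l^{(i_k,k-1)})$ is exactly the level-$k$ induction hypothesis restricted to fibres over $(\z^{d_{i_k}})^\perp$. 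That is the crux.

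So the argument is a nested induction: the outer descending induction on $k$ gives, at each stage, an equality of parallelepiped data (the $\mu^{(i_k,k)}, \nu^{(i_k,k)}$) fibre by fibre, and Lemma \ref{l:computation of parallelepiped} propagates this through the change of the distinguished coordinate. More carefully, I should phrase the induction so that the statement at level $k$ includes both the Minkowski-sum equality for $\Psi_{\bf i}(\mathcal{B}^{w_{\ge k}}(\cdot))$ and the resulting additivity of the box parameters $\mu^{(i,k)}({\bf c}), \nu^{(i,k)}({\bf c})$ for all $i$ and all ${\bf c}$; then Lemma \ref{l:computation of parallelepiped} (additivity of $x_k$ for the changed entries, additivity of $L_k({\bf c})$ for the lower endpoint) lets me pass from level $k+1$ data to level $k$ data, and summing the boxes over all fibres ${\bf c}$ reconstructs the full set. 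The main obstacle I anticipate is bookkeeping: making sure that a fibre of $\Psi_{\bf i}(\mathcal{B}^{w_{\ge k}}(\lambda_1+\lambda_2))$ over ${\bf c}$ is nonempty exactly when both $\lambda_1$- and $\lambda_2$-fibres over ${\bf c}$ are nonempty (so that the box parameters are defined on the same index set), which follows from the nonemptiness statement in Lemma \ref{l:computation of parallelepiped} together with the outer induction hypothesis, and the elementary but slightly fiddly verification that Minkowski sum commutes with taking fibres over the $(\z^{d_{i_k}})^\perp$-directions for parapolytopes. Once those are in place the computation is routine.
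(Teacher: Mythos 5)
Your overall strategy --- induction on $k$, working fibrewise over the affine subspaces ${\bf c} + \z^{d_{i_k}}$, using Lemma \ref{l:computation of parallelepiped} to propagate additivity of the box parameters, with Corollary \ref{c:additivity of opposite} supplying one inclusion for free --- is the same as the paper's. Before the main point, fix the indexing: $\mathcal{B}^{w_{\ge 1}}(\lambda) = \mathcal{B}^{w_0}(\lambda) = \{b_{w_0\lambda}\}$ is the singleton with image $(x_1,\ldots,x_N)$, while $\mathcal{B}^{w_{\ge N+1}}(\lambda) = \mathcal{B}(\lambda)$ is the full crystal; so the trivial base case is $k=1$, not $k=N+1$, the induction ascends, and passing from $\mathcal{B}^{w_{\ge k}}$ to $\mathcal{B}^{w_{\ge k+1}}$ replaces the point $\{x_k\}$ in the $m_k$-th slot by the interval $[x_k - L_k({\bf c}), x_k]$, not the reverse. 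This is a relabelling issue and is fixable.

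The substantive gap is in the key step. The induction hypothesis $\Psi_{\bf i}(\mathcal{B}^{w_{\ge k}}(\lambda_1+\lambda_2)) = \Psi_{\bf i}(\mathcal{B}^{w_{\ge k}}(\lambda_1)) + \Psi_{\bf i}(\mathcal{B}^{w_{\ge k}}(\lambda_2))$ does \emph{not} say that the fibre over ${\bf c}$ is the Minkowski sum of two fibres; it says it is the \emph{union over all decompositions} ${\bf c} = {\bf c}_1 + {\bf c}_2$ of Minkowski sums of fibres. Minkowski sum does not commute with taking fibres, and this is exactly the point you defer as ``elementary but slightly fiddly''. The paper's way out is that the left-hand fibre is a single box, so \emph{some} decomposition realizes its top corner; for that particular $({\bf c}_1,{\bf c}_2)$ one gets $\nu(\lambda_1+\lambda_2,{\bf c}) = \nu(\lambda_1,{\bf c}_1)+\nu(\lambda_2,{\bf c}_2)$ exactly, but only the inequality $\mu_l(\lambda_1+\lambda_2,{\bf c}) \le \mu_l(\lambda_1,{\bf c}_1)+\mu_l(\lambda_2,{\bf c}_2)$. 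Consequently your claim that additivity of $\sum_l(\mu_l+\nu_l)$, hence of $L_k({\bf c})$, ``is exactly the level-$k$ induction hypothesis restricted to fibres'' is not available: at this stage one only has $L_k(\lambda_1+\lambda_2,{\bf c}) \le L_k(\lambda_1,{\bf c}_1)+L_k(\lambda_2,{\bf c}_2)$, i.e.\ the new lower endpoint for $\lambda_1+\lambda_2$ is $\ge$ the sum of the new lower endpoints for $\lambda_1$ and $\lambda_2$. The missing idea is the squeeze: the opposite inequality comes from the a priori inclusion $\Psi_{\bf i}(\mathcal{B}^{w_{\ge k+1}}(\lambda_1))+\Psi_{\bf i}(\mathcal{B}^{w_{\ge k+1}}(\lambda_2)) \subset \Psi_{\bf i}(\mathcal{B}^{w_{\ge k+1}}(\lambda_1+\lambda_2))$ of Corollary \ref{c:additivity of opposite} applied at the \emph{next} level, and only combining the two inequalities forces equality of the $\mu$'s (equivalently of $L_k$) and hence of the boxes. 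Without this step the induction does not close, and your ``strengthened hypothesis'' of additivity of the box parameters ``for all ${\bf c}$'' is not even well posed, since it must be an existential statement over decompositions of ${\bf c}$.
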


\begin{proof}
We proceed by induction on $k$. If $k = 1$, then the assertion is obvious since \[\Psi_{\bf i} (b_{w_0 (\lambda_1 + \lambda_2)}) = \Psi_{\bf i} (b_{w_0 \lambda_1}) + \Psi_{\bf i} (b_{w_0 \lambda_2})\] by Theorem \ref{t:Nakashima}. Let $1 \le k \le N$, and assume that 
\begin{equation}\label{inductive hypothesis for additivity}
\begin{aligned}
\Psi_{\bf i}(\mathcal{B}^{w_{\ge k}}(\lambda_1 + \lambda_2)) = \Psi_{\bf i}(\mathcal{B}^{w_{\ge k}}(\lambda_1)) + \Psi_{\bf i}(\mathcal{B}^{w_{\ge k}}(\lambda_2)). 
\end{aligned}
\end{equation}
By Corollary \ref{c:additivity of opposite}, for the inductive step, it suffices to prove that \[\Psi_{\bf i}(\mathcal{B}^{w_{\ge k+1}}(\lambda_1 + \lambda_2)) \subset \Psi_{\bf i}(\mathcal{B}^{w_{\ge k+1}}(\lambda_1)) + \Psi_{\bf i}(\mathcal{B}^{w_{\ge k+1}}(\lambda_2)).\] Fix ${\bf c} \in (\z^{d_{i_k}})^\perp$ such that $\Psi_{\bf i}(\mathcal{B}^{w_{\ge k}}(\lambda_1 + \lambda_2)) \cap ({\bf c} + \z^{d_{i_k}}) \neq \emptyset$. We denote $\mu^{(i_k, k-1)} ({\bf c}), \nu^{(i_k, k-1)} ({\bf c}), L_k ({\bf c})$ for $\mathcal{B}^{w_{\ge k}} (\lambda)$ by $\mu^{(i_k, k-1)} (\lambda, {\bf c}), \nu^{(i_k, k-1)} (\lambda, {\bf c}), L_k (\lambda, {\bf c})$, respectively, where $\lambda = \lambda_1, \lambda_2, \lambda_1 +\lambda_2$. The equality \eqref{inductive hypothesis for additivity} implies that 
\begin{align*}
&\Psi_{\bf i}(\mathcal{B}^{w_{\ge k}}(\lambda_1 + \lambda_2)) \cap ({\bf c} + \z^{d_{i_k}})\\ 
=\ &\bigcup_{{\bf c}_1, {\bf c}_2 \in (\z^{d_{i_k}})^\perp;\ {\bf c}_1 + {\bf c}_2 = {\bf c}} \left(\Psi_{\bf i}(\mathcal{B}^{w_{\ge k}}(\lambda_1)) \cap ({\bf c}_1 + \z^{d_{i_k}}) + \Psi_{\bf i}(\mathcal{B}^{w_{\ge k}}(\lambda_2)) \cap ({\bf c}_2 + \z^{d_{i_k}})\right), 
\end{align*}
and hence that 
\begin{align*}
&\Pi_\z(\mu^{(i_k, k-1)} (\lambda_1 + \lambda_2, {\bf c}), \nu^{(i_k, k-1)} (\lambda_1 + \lambda_2, {\bf c}))\\ 
=\ &\bigcup_{{\bf c}_1, {\bf c}_2 \in (\z^{d_{i_k}})^\perp;\ {\bf c}_1 + {\bf c}_2 = {\bf c}} \Pi_\z(\mu^{(i_k, k-1)} (\lambda_1, {\bf c}_1) +\mu^{(i_k, k-1)} (\lambda_2, {\bf c}_2), \nu^{(i_k, k-1)} (\lambda_1, {\bf c}_1) +\nu^{(i_k, k-1)} (\lambda_2, {\bf c}_2)). 
\end{align*}
From this, there exist ${\bf c}_1, {\bf c}_2 \in (\z^{d_{i_k}})^\perp$ such that ${\bf c}_1 + {\bf c}_2 = {\bf c}$, and such that 
\begin{equation}\label{comparison of edges}
\begin{aligned}
&\nu^{(i_k, k-1)} (\lambda_1 + \lambda_2, {\bf c}) = \nu^{(i_k, k-1)} (\lambda_1, {\bf c}_1) +\nu^{(i_k, k-1)} (\lambda_2, {\bf c}_2),\\
&\mu^{(i_k, k-1)} _l(\lambda_1 + \lambda_2, {\bf c}) \le \mu^{(i_k, k-1)} _l(\lambda_1, {\bf c}_1) +\mu^{(i_k, k-1)} _l(\lambda_2, {\bf c}_2)\ {\rm for\ all}\ 1 \le l \le d_{i_k}.
\end{aligned}
\end{equation}
Since \[\Psi_{\bf i}(\mathcal{B}^{w_{\ge k+1}}(\lambda_1)) + \Psi_{\bf i}(\mathcal{B}^{w_{\ge k+1}}(\lambda_2)) \subset \Psi_{\bf i}(\mathcal{B}^{w_{\ge k+1}}(\lambda_1 + \lambda_2))\] by Corollary \ref{c:additivity of opposite}, we have 
\begin{equation}\label{inclusion of parallelepiped for additivity}
\begin{aligned}
&\Pi_\z(\mu^{(i_k, k)} (\lambda_1, {\bf c}_1) +\mu^{(i_k, k)} (\lambda_2, {\bf c}_2), \nu^{(i_k, k)} (\lambda_1, {\bf c}_1) +\nu^{(i_k, k)} (\lambda_2, {\bf c}_2))\\ 
\subset\ &\Pi_\z(\mu^{(i_k, k)} (\lambda_1 + \lambda_2, {\bf c}), \nu^{(i_k, k)} (\lambda_1 + \lambda_2, {\bf c})).
\end{aligned}
\end{equation}
Also, Lemma \ref{l:computation of parallelepiped} implies that 
\begin{equation}\label{eq:additivity of nu for k}
\begin{aligned}
\nu^{(i_k, k)} (\lambda_1 + \lambda_2, {\bf c}) &= \nu^{(i_k, k-1)} (\lambda_1 + \lambda_2, {\bf c})\\
&= \nu^{(i_k, k-1)} (\lambda_1, {\bf c}_1) + \nu^{(i_k, k-1)} (\lambda_2, {\bf c}_2)\\
&= \nu^{(i_k, k)} (\lambda_1, {\bf c}_1) + \nu^{(i_k, k)} (\lambda_2, {\bf c}_2),
\end{aligned}
\end{equation}
and that 
\begin{align*}
\mu_{m_k} ^{(i_k, k)} (\lambda_1 + \lambda_2, {\bf c}) &= -\langle w_{\ge k} (\lambda_1 + \lambda_2), h_{i_k} \rangle -L_k(\lambda_1 + \lambda_2, {\bf c})\\
&\ge-\langle w_{\ge k} \lambda_1, h_{i_k} \rangle -\langle w_{\ge k} \lambda_2, h_{i_k} \rangle -(L_k(\lambda_1, {\bf c}_1) +L_k(\lambda_2, {\bf c}_2))\\
&({\rm by}\ \eqref{comparison of edges}\ {\rm and\ the\ definition\ of}\ L_k({\bf c}))\\
&= \mu_{m_k} ^{(i_k, k)} (\lambda_1, {\bf c}_1) +\mu_{m_k} ^{(i_k, k)} (\lambda_2, {\bf c}_2).
\end{align*}
In addition, this inequality becomes the equality if and only if $\mu_l ^{(i_k, k-1)} (\lambda_1 + \lambda_2, {\bf c}) = \mu_l ^{(i_k, k-1)} (\lambda_1, {\bf c}_1) +\mu_l ^{(i_k, k-1)} (\lambda_2, {\bf c}_2)$ for all $1 \le l \le d_{i_k}$. However, the inclusion relation \eqref{inclusion of parallelepiped for additivity} implies that $\mu_{m_k} ^{(i_k, k)} (\lambda_1, {\bf c}_1) +\mu_{m_k} ^{(i_k, k)} (\lambda_2, {\bf c}_2) \ge \mu_{m_k} ^{(i_k, k)} (\lambda_1 + \lambda_2, {\bf c})$, and hence that $\mu_{m_k} ^{(i_k, k)} (\lambda_1, {\bf c}_1) +\mu_{m_k} ^{(i_k, k)} (\lambda_2, {\bf c}_2) = \mu_{m_k} ^{(i_k, k)} (\lambda_1 + \lambda_2, {\bf c})$. This proves $\mu^{(i_k, k)} (\lambda_1 + \lambda_2, {\bf c}) = \mu^{(i_k, k)} (\lambda_1, {\bf c}_1) +\mu^{(i_k, k)} (\lambda_2, {\bf c}_2)$ by Lemma \ref{l:computation of parallelepiped}, which implies by \eqref{eq:additivity of nu for k} that 
\begin{align*}
&{\bf c} +\Pi_\z(\mu^{(i_k, k)} (\lambda_1 + \lambda_2, {\bf c}), \nu^{(i_k, k)} (\lambda_1 + \lambda_2, {\bf c}))\\ 
=\ &\left({\bf c}_1 +\Pi_\z(\mu^{(i_k, k)} (\lambda_1, {\bf c}_1), \nu^{(i_k, k)} (\lambda_1, {\bf c}_1))\right) + \left({\bf c}_2 +\Pi_\z(\mu^{(i_k, k)} (\lambda_2, {\bf c}_2), \nu^{(i_k, k)} (\lambda_2, {\bf c}_2))\right)\\
\subset\ &\Psi_{\bf i}(\mathcal{B}^{w_{\ge k+1}}(\lambda_1)) + \Psi_{\bf i}(\mathcal{B}^{w_{\ge k+1}}(\lambda_2)).
\end{align*}
Hence we conclude that $\Psi_{\bf i}(\mathcal{B}^{w_{\ge k+1}}(\lambda_1 + \lambda_2)) \subset \Psi_{\bf i}(\mathcal{B}^{w_{\ge k+1}}(\lambda_1)) + \Psi_{\bf i}(\mathcal{B}^{w_{\ge k+1}}(\lambda_2))$. This proves the lemma. 
\end{proof}

Note that $\Delta_{\bf i}(m\lambda) = m\Delta_{\bf i}(\lambda)$ for $m \in \z_{>0}$ by the additivity of $\Psi_{\bf i}$ (see \cite[Theorem 4.1]{FN}). Hence if $\Delta_{\bf i}(\lambda)$ is a parapolytope, then the polytopes $\Delta_{\bf i}(m\lambda)$, $m \in \z_{>0}$, are all parapolytopes. By Lemma \ref{l:additivity of lattice points}, this implies that \[\Psi_{\bf i}(\mathcal{B}(m\lambda)) = \underbrace{\Psi_{\bf i}(\mathcal{B}(\lambda)) + \cdots + \Psi_{\bf i}(\mathcal{B}(\lambda))}_m\] for $m \in \z_{>0}$, and hence that the equality $\Delta_{\bf i}(\lambda) = {\rm Conv}(\Psi_{\bf i}(\mathcal{B}(\lambda)))$ holds. This proves part (1) of Theorem \ref{t:main result}.

For ${\bf c} = (c_s)_{1 \le s \le N;\ i_s \neq i_k} \in (\r^{d_{i_k}})^\perp$ and $l = k-1, k$ such that ${\rm Conv}(\Psi_{\bf i}(\mathcal{B}^{w_{\ge l +1}}(\lambda))) \cap ({\bf c} + \r^{d_{i_k}}) \neq \emptyset$, there uniquely exist \[\mu_+ ^{(i_k, l)} ({\bf c}) = (\mu_{+, 1} ^{(i_k, l)} ({\bf c}), \ldots, \mu_{+, d_{i_k}} ^{(i_k, l)} ({\bf c})),\ \nu_+ ^{(i_k, l)} ({\bf c}) = (\nu_{+, 1} ^{(i_k, l)} ({\bf c}), \ldots, \nu_{+, d_{i_k}} ^{(i_k, l)} ({\bf c})) \in \r^{d_{i_k}}\] such that \[{\rm Conv}(\Psi_{\bf i}(\mathcal{B}^{w_{\ge l +1}}(\lambda))) \cap ({\bf c} + \r^{d_{i_k}}) = {\bf c} + \Pi(\mu_+ ^{(i_k, l)} ({\bf c}), \nu_+ ^{(i_k, l)} ({\bf c})).\] If we set 
\begin{align*}
&\tilde{\bf c} \coloneqq (\hat{\lambda}_{i_s} -c_s)_{1 \le s \le N;\ i_s \neq i_k},\\ 
&\tilde{\mu}_+ ^{(i_k, l)}({\bf c}) = (\tilde{\mu}_{+, 1} ^{(i_k, l)}({\bf c}), \ldots, \tilde{\mu}_{+, d_{i_k}} ^{(i_k, l)}({\bf c})) \coloneqq (\hat{\lambda}_{i_k}, \ldots, \hat{\lambda}_{i_k}) -\nu_+ ^{(i_k, l)}({\bf c}),\ {\rm and}\\
&\tilde{\nu}_+ ^{(i_k, l)}({\bf c}) = (\tilde{\nu}_{+, 1} ^{(i_k, l)}({\bf c}), \ldots, \tilde{\nu}_{+, d_{i_k}} ^{(i_k, l)}({\bf c})) \coloneqq (\hat{\lambda}_{i_k}, \ldots, \hat{\lambda}_{i_k}) -\mu_+ ^{(i_k, l)}({\bf c})
\end{align*}
for $l = k-1, k$, then we have \[\left(-{\rm Conv}(\Psi_{\bf i}(\mathcal{B}^{w_{\ge l +1}}(\lambda))) + (\hat{\lambda}_{i_1}, \ldots, \hat{\lambda}_{i_N})\right) \cap (\tilde{\bf c} + \r^{d_{i_k}}) = \tilde{\bf c} + \Pi(\tilde{\mu}_+ ^{(i_k, l)}({\bf c}), \tilde{\nu}_+ ^{(i_k, l)}({\bf c})).\] 

\vspace{2mm}\begin{lem}\label{l:domain of mu and nu}
For ${\bf c} \in (\r^{d_{i_k}})^\perp$, it follows that ${\rm Conv}(\Psi_{\bf i}(\mathcal{B}^{w_{\ge k}}(\lambda))) \cap ({\bf c} + \r^{d_{i_k}}) \neq \emptyset$ if and only if ${\rm Conv}(\Psi_{\bf i}(\mathcal{B}^{w_{\ge k +1}}(\lambda))) \cap ({\bf c} + \r^{d_{i_k}}) \neq \emptyset$. 
\end{lem}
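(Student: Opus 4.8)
The plan is to restate the claim via the coordinate projection $p\colon \r^N=(\r^{d_{i_k}})^\perp\oplus\r^{d_{i_k}}\to(\r^{d_{i_k}})^\perp$. Setting $\Pi\coloneqq p({\rm Conv}(\Psi_{\bf i}(\mathcal{B}^{w_{\ge k}}(\lambda))))$ and $\Pi'\coloneqq p({\rm Conv}(\Psi_{\bf i}(\mathcal{B}^{w_{\ge k+1}}(\lambda))))$, which are both rational convex polytopes, the assertion is precisely $\Pi=\Pi'$. The inclusion $\Pi\subseteq\Pi'$ is immediate from $\mathcal{B}^{w_{\ge k}}(\lambda)\subseteq\mathcal{B}^{w_{\ge k+1}}(\lambda)$, which is contained in \eqref{e:recursion of opposite 1}. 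So I only have to show $\Pi'\subseteq\Pi$, and since these are polytopes it is enough to verify ${\bf c}\in\Pi$ for every rational ${\bf c}\in\Pi'$.

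Before doing that I would record two auxiliary facts. First, a scaling statement: since $\Delta_{\bf i}(m\lambda)=m\,\Delta_{\bf i}(\lambda)$ is again a parapolytope for all $m\in\z_{>0}$, iterating Lemma \ref{l:additivity of lattice points} (with $\lambda_1=(j-1)\lambda$, $\lambda_2=\lambda$) shows that $\Psi_{\bf i}(\mathcal{B}^{w_{\ge l}}(m\lambda))$ is the $m$-fold Minkowski sum of $\Psi_{\bf i}(\mathcal{B}^{w_{\ge l}}(\lambda))$, hence ${\rm Conv}(\Psi_{\bf i}(\mathcal{B}^{w_{\ge l}}(m\lambda)))=m\,{\rm Conv}(\Psi_{\bf i}(\mathcal{B}^{w_{\ge l}}(\lambda)))$ for $l=k,k+1$ (using that the convex hull commutes with Minkowski sums). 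Second, a saturation statement: ${\rm Conv}(\Psi_{\bf i}(\mathcal{B}^{w_{\ge k+1}}(m\lambda)))\cap\z^N=\Psi_{\bf i}(\mathcal{B}^{w_{\ge k+1}}(m\lambda))$; this follows because, by Proposition \ref{p:polytopes for opposite}, the left-hand convex hull lies in the affine subspace $\{{\bf a}\in\r^N\mid a_l=-\langle w_{\ge l}(m\lambda),h_{i_l}\rangle\ \text{for}\ k+1\le l\le N\}$ and, by the equality $\Delta_{\bf i}(m\lambda)={\rm Conv}(\Psi_{\bf i}(\mathcal{B}(m\lambda)))$ established above, also inside $\Delta_{\bf i}(m\lambda)$, so any lattice point of it belongs to $\Delta_{\bf i}(m\lambda)\cap\z^N=\Psi_{\bf i}(\mathcal{B}(m\lambda))$ (Proposition \ref{p:convexity} (2)) and satisfies the defining equalities.

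Now fix a rational ${\bf c}\in\Pi'$. The fiber ${\rm Conv}(\Psi_{\bf i}(\mathcal{B}^{w_{\ge k+1}}(\lambda)))\cap({\bf c}+\r^{d_{i_k}})$ is a nonempty rational polytope, so it contains a rational point $a'$; choose $m\in\z_{>0}$ with $ma'\in\z^N$, so that $m{\bf c}\in(\z^{d_{i_k}})^\perp$. By the scaling statement, $ma'\in m\,{\rm Conv}(\Psi_{\bf i}(\mathcal{B}^{w_{\ge k+1}}(\lambda)))={\rm Conv}(\Psi_{\bf i}(\mathcal{B}^{w_{\ge k+1}}(m\lambda)))$, and then by the saturation statement $ma'\in\Psi_{\bf i}(\mathcal{B}^{w_{\ge k+1}}(m\lambda))$; in particular $\Psi_{\bf i}(\mathcal{B}^{w_{\ge k+1}}(m\lambda))\cap(m{\bf c}+\z^{d_{i_k}})\neq\emptyset$. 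Applying the first assertion of Lemma \ref{l:computation of parallelepiped} to $m\lambda$ at position $k$ gives $\Psi_{\bf i}(\mathcal{B}^{w_{\ge k}}(m\lambda))\cap(m{\bf c}+\z^{d_{i_k}})\neq\emptyset$; dividing a point of this set by $m$ and using the scaling statement for $l=k$ produces a point of ${\rm Conv}(\Psi_{\bf i}(\mathcal{B}^{w_{\ge k}}(\lambda)))$ lying in ${\bf c}+\r^{d_{i_k}}$. Hence ${\bf c}\in\Pi$, which completes the proof that $\Pi'\subseteq\Pi$.

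The main obstacle is the transition from a real point of the fiber over ${\bf c}$ to an honest lattice point in the image of an opposite Demazure crystal, which is where the integral statement of Lemma \ref{l:computation of parallelepiped} can be brought to bear; the device for overcoming it is to clear denominators by replacing $\lambda$ with $m\lambda$ and to control ${\rm Conv}(\Psi_{\bf i}(\mathcal{B}^{w_{\ge l}}(m\lambda)))$ through Minkowski additivity (Lemma \ref{l:additivity of lattice points}) together with the saturation property above. A minor point to keep track of is that every $\Delta_{\bf i}(m\lambda)$ is still a parapolytope, which holds since $\Delta_{\bf i}(m\lambda)=m\,\Delta_{\bf i}(\lambda)$, so that Lemma \ref{l:additivity of lattice points} is applicable at each stage of the iteration.
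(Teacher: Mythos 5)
Your proof is correct and follows essentially the same route as the paper: reduce to equality of the projections to $(\r^{d_{i_k}})^\perp$, clear denominators by passing to $m\lambda$ via the scaling $\mathrm{Conv}(\Psi_{\bf i}(\mathcal{B}^{w_{\ge l}}(m\lambda)))=m\,\mathrm{Conv}(\Psi_{\bf i}(\mathcal{B}^{w_{\ge l}}(\lambda)))$ coming from Lemma \ref{l:additivity of lattice points}, and then transfer nonemptiness through the integral statement of Lemma \ref{l:computation of parallelepiped}. The only (harmless) difference is that you run the argument at an arbitrary rational point of the projected polytope and conclude by density, whereas the paper runs it at the vertices and concludes by convexity.
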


\begin{proof}
If we denote by $P_k \colon \r^N \rightarrow (\r^{d_{i_k}})^\perp$ the canonical projection, then we have 
\begin{align*}
P_k ({\rm Conv}(\Psi_{\bf i}(\mathcal{B}^{w_{\ge l}}(\lambda)))) = \{{\bf c} \in (\r^{d_{i_k}})^\perp \mid {\rm Conv}(\Psi_{\bf i}(\mathcal{B}^{w_{\ge l}}(\lambda))) \cap ({\bf c} + \r^{d_{i_k}}) \neq \emptyset\}
\end{align*}
for $l = k, k+1$. Hence it suffices to prove that $P_k ({\rm Conv}(\Psi_{\bf i}(\mathcal{B}^{w_{\ge k +1}}(\lambda)))) = P_k ({\rm Conv}(\Psi_{\bf i}(\mathcal{B}^{w_{\ge k}}(\lambda))))$. 

Since $\mathcal{B}^{w_{\ge k}}(\lambda) \subset \mathcal{B}^{w_{\ge k +1}}(\lambda)$, we have $P_k ({\rm Conv}(\Psi_{\bf i}(\mathcal{B}^{w_{\ge k}}(\lambda)))) \subset P_k ({\rm Conv}(\Psi_{\bf i}(\mathcal{B}^{w_{\ge k +1}}(\lambda))))$. Let ${\bf c} \in (\z^{d_{i_k}})^\perp$ be a vertex of the lattice polytope $P_k ({\rm Conv}(\Psi_{\bf i}(\mathcal{B}^{w_{\ge k +1}}(\lambda))))$. Then, it follows that ${\rm Conv}(\Psi_{\bf i}(\mathcal{B}^{w_{\ge k +1}}(\lambda))) \cap ({\bf c} + \r^{d_{i_k}}) \neq \emptyset$, and that $\mu_+ ^{(i_k, k)} ({\bf c}), \nu_+ ^{(i_k, k)} ({\bf c}) \in \q^{d_{i_k}}$. We take $l \in \z_{>0}$ such that $l \mu_+ ^{(i_k, k)} ({\bf c}), l \nu_+ ^{(i_k, k)} ({\bf c}) \in \z^{d_{i_k}}$. Since we have
\begin{align*}
{\rm Conv}(\Psi_{\bf i}(\mathcal{B}^{w_{\ge k +1}}(l\lambda))) \cap (l{\bf c} + \r^{d_{i_k}}) &= l \left({\rm Conv}(\Psi_{\bf i}(\mathcal{B}^{w_{\ge k +1}}(\lambda))) \cap ({\bf c} + \r^{d_{i_k}})\right)\\
&= l{\bf c} + \Pi(l \mu_+ ^{(i_k, k)} ({\bf c}), l \nu_+ ^{(i_k, k)} ({\bf c})),
\end{align*}
it follows that \[\Psi_{\bf i}(\mathcal{B}^{w_{\ge k +1}}(l\lambda)) \cap (l{\bf c} + \z^{d_{i_k}}) = \left({\rm Conv}(\Psi_{\bf i}(\mathcal{B}^{w_{\ge k +1}}(l\lambda))) \cap (l{\bf c} + \r^{d_{i_k}})\right) \cap \z^N \neq \emptyset.\] Hence Lemma \ref{l:computation of parallelepiped} implies that \[\Psi_{\bf i}(\mathcal{B}^{w_{\ge k}}(l\lambda)) \cap (l{\bf c} + \z^{d_{i_k}}) \neq \emptyset,\] and hence that \[{\rm Conv}(\Psi_{\bf i}(\mathcal{B}^{w_{\ge k}}(\lambda))) \cap ({\bf c} + \r^{d_{i_k}}) = \frac{1}{l} \left({\rm Conv}(\Psi_{\bf i}(\mathcal{B}^{w_{\ge k}}(l \lambda))) \cap (l {\bf c} + \r^{d_{i_k}})\right) \neq \emptyset.\] Thus, the vertices of $P_k ({\rm Conv}(\Psi_{\bf i}(\mathcal{B}^{w_{\ge k +1}}(\lambda))))$ are contained in $P_k ({\rm Conv}(\Psi_{\bf i}(\mathcal{B}^{w_{\ge k}}(\lambda))))$. From this and the convexity of $P_k ({\rm Conv}(\Psi_{\bf i}(\mathcal{B}^{w_{\ge k}}(\lambda))))$, we obtain $P_k ({\rm Conv}(\Psi_{\bf i}(\mathcal{B}^{w_{\ge k +1}}(\lambda)))) \subset P_k ({\rm Conv}(\Psi_{\bf i}(\mathcal{B}^{w_{\ge k}}(\lambda))))$, which proves the lemma. 
\end{proof}

\begin{lem}\label{l:divided difference}
The polytope ${\rm Conv}(\Psi_{\bf i}(\mathcal{B}^{w_{\ge k}}(\lambda)))$ is a parapolytope for all $1 \le k \le N +1$, and the following equality holds for all $1 \le k \le N$$:$ \[-{\rm Conv}(\Psi_{\bf i}(\mathcal{B}^{w_{\ge k+1}}(\lambda))) + (\hat{\lambda}_{i_1}, \ldots, \hat{\lambda}_{i_N}) = D_{i_k} ^{(k)} \left(-{\rm Conv}(\Psi_{\bf i}(\mathcal{B}^{w_{\ge k}}(\lambda))) + (\hat{\lambda}_{i_1}, \ldots, \hat{\lambda}_{i_N})\right).\]
\end{lem}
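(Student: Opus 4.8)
The plan is to reduce both assertions, by a rescaling argument, to the lattice-point statements already proved (Lemmas \ref{l:nonnegativity}, \ref{l:computation of parallelepiped}, \ref{l:additivity of lattice points}). Write $R_k \coloneqq {\rm Conv}(\Psi_{\bf i}(\mathcal{B}^{w_{\ge k}}(\lambda)))$ for $1 \le k \le N+1$. I would first prove the identity
\[R_k = \Delta_{\bf i}(\lambda) \cap \{(a_1, \ldots, a_N) \in \r^N \mid a_l = x_l\ \text{for}\ k \le l \le N\}.\]
One inclusion is immediate from Proposition \ref{p:polytopes for opposite} together with part (1), which gives $\Delta_{\bf i}(\lambda) = {\rm Conv}(\Psi_{\bf i}(\mathcal{B}(\lambda)))$. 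For the reverse inclusion, a vertex $v$ of the right-hand side is rational (as $\Delta_{\bf i}(\lambda)$ is a lattice polytope and the $x_l$ lie in $\z$), so $mv \in \z^N$ for some $m \in \z_{>0}$; since $\Delta_{\bf i}(m\lambda) = m\Delta_{\bf i}(\lambda)$ and $x_l$ is linear in the weight, $mv \in \Delta_{\bf i}(m\lambda) \cap \z^N = \Psi_{\bf i}(\mathcal{B}(m\lambda))$ by Proposition \ref{p:convexity}, hence $mv \in \Psi_{\bf i}(\mathcal{B}^{w_{\ge k}}(m\lambda))$ by Proposition \ref{p:polytopes for opposite}; dividing by $m$ and using Lemma \ref{l:additivity of lattice points} in the form ${\rm Conv}(\Psi_{\bf i}(\mathcal{B}^{w_{\ge k}}(m\lambda))) = m R_k$ gives $v \in R_k$, and convexity of $R_k$ finishes the argument.

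Thus $R_k$ is obtained from the parapolytope $\Delta_{\bf i}(\lambda)$ by successively intersecting with hyperplanes of the form $\{a_j = x_j\}$, and each such operation preserves the parapolytope property: for a parapolytope $P$, a hyperplane $H = \{a_j = t\}$, an index $i$, and ${\bf c} \in (\r^{d_i})^\perp$, the fibre $(P \cap H) \cap ({\bf c} + \r^{d_i})$ equals $P \cap ({\bf c} + \r^{d_i})$ or is empty according as $a_j({\bf c}) = t$ or not when $i \neq i_j$, and equals the box $P \cap ({\bf c} + \r^{d_i})$ with the coordinate indexed by $j$ frozen at $t$ (again a box) when $i = i_j$. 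Iterating, $R_k$ is a parapolytope; this is the first assertion. For the second, put $Q_k \coloneqq -R_k + (\hat\lambda_{i_1}, \ldots, \hat\lambda_{i_N})$; negation and translation preserve parapolytopes and $a_k \equiv \hat\lambda_{i_k} - x_k$ on $Q_k$, so $Q_k \in \mathscr{P}_\Box(k)$, while the displayed identity gives $Q_k = Q_{k+1} \cap \{a_k = \hat\lambda_{i_k} - x_k\}$ and $Q_{N+1} = -\Delta_{\bf i}(\lambda) + (\hat\lambda_{i_1}, \ldots, \hat\lambda_{i_N})$. Hence the asserted equality is exactly $D_{i_k}^{(k)}(Q_k) = Q_{k+1}$ for $1 \le k \le N$.

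The key remaining input is the continuous analogue of Lemma \ref{l:computation of parallelepiped}: for ${\bf c}$ in the common projection $P_k(R_k) = P_k(R_{k+1})$ onto $(\r^{d_{i_k}})^\perp$ (equal by Lemma \ref{l:domain of mu and nu}), one has $\mu_{+,l}^{(i_k,k)}({\bf c}) = \mu_{+,l}^{(i_k,k-1)}({\bf c})$ and $\nu_{+,l}^{(i_k,k)}({\bf c}) = \nu_{+,l}^{(i_k,k-1)}({\bf c})$ for $l \neq m_k$, while $\nu_{+,m_k}^{(i_k,k)}({\bf c}) = x_k$ and $\mu_{+,m_k}^{(i_k,k)}({\bf c}) = x_k - \mathcal{L}_k({\bf c})$, where $\mathcal{L}_k({\bf c}) \coloneqq -\langle \lambda, h_{i_k} \rangle + \sum_{1 \le l \le d_{i_k}}(\mu_{+,l}^{(i_k,k-1)}({\bf c}) + \nu_{+,l}^{(i_k,k-1)}({\bf c})) + \sum_{1 \le s \le N;\ i_s \neq i_k} c_{i_k, i_s} c_s$, and moreover $\mathcal{L}_k({\bf c}) \ge 0$. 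For rational ${\bf c}$ I would pick $m \in \z_{>0}$ clearing the denominators of $m{\bf c}$ and of all the endpoints $\mu_{+}^{(i_k,j-1)}({\bf c}), \nu_{+}^{(i_k,j-1)}({\bf c})$ ($j = k, k+1$); then ${\rm Conv}(\Psi_{\bf i}(\mathcal{B}^{w_{\ge j}}(m\lambda))) = m R_j$ (Lemma \ref{l:additivity of lattice points}), so the slice ${\rm Conv}(\Psi_{\bf i}(\mathcal{B}^{w_{\ge j}}(m\lambda))) \cap (m{\bf c} + \r^{d_{i_k}})$ is a box with integral vertices and hence the convex hull of its lattice points; comparing lattice points identifies $\mu^{(i_k,j-1)}(m\lambda, m{\bf c}) = m\mu_{+}^{(i_k,j-1)}({\bf c})$ and likewise for $\nu$, and then Lemmas \ref{l:computation of parallelepiped} and \ref{l:nonnegativity} applied to $m\lambda$ and $m{\bf c}$ (using $x_k(m\lambda) = m x_k(\lambda)$ and $L_k(m\lambda, m{\bf c}) = m\mathcal{L}_k({\bf c})$) give the claimed relations after dividing by $m$. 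The case of arbitrary ${\bf c}$ follows, because ${\bf c} \mapsto \mu_{+}^{(i_k,l)}({\bf c}), \nu_{+}^{(i_k,l)}({\bf c})$ are piecewise linear (hence continuous) on the polytope $P_k(R_{k+1})$ and the rational points are dense there.

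Finally I would evaluate $D_{i_k}^{(k)}(Q_k)$ fibre by fibre. Over a fibre $\tilde{\bf c} + \r^{d_{i_k}}$ on which $Q_k$ is nonempty — equivalently $Q_{k+1}$ is nonempty, by Lemma \ref{l:domain of mu and nu} — the box $Q_k \cap (\tilde{\bf c} + \r^{d_{i_k}})$ is degenerate in the $m_k$-th direction at the value $\hat\lambda_{i_k} - x_k$ and agrees with $Q_{k+1} \cap (\tilde{\bf c} + \r^{d_{i_k}})$ in all coordinates $l \neq m_k$; the definition of $D_{i_k}^{(k)}$ keeps $\mu_{m_k} = \hat\lambda_{i_k} - x_k$ and replaces the upper endpoint $\nu_{m_k} = \hat\lambda_{i_k} - x_k$ by $\nu_{m_k}' = \nu_{m_k} + l_{i_k}(\tilde{\bf c}) - \sum_l(\mu_l + \nu_l)$. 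Substituting the translation–negation dictionary relating $\tilde{\mu}_{+}^{(i_k,k-1)}({\bf c}), \tilde{\nu}_{+}^{(i_k,k-1)}({\bf c})$ to $\mu_{+}^{(i_k,k-1)}({\bf c}), \nu_{+}^{(i_k,k-1)}({\bf c})$, and using $\lambda = \sum_j \hat\lambda_j d_j \alpha_j$ with $\langle \alpha_j, h_i \rangle = c_{i, j}$, a direct computation gives $\nu_{m_k}' - \nu_{m_k} = \mathcal{L}_k({\bf c}) \ge 0$; so we are in the non-virtual branch of the definition, and since the continuous recursion pins $Q_{k+1}$'s $m_k$-th interval over $\tilde{\bf c}$ to $[\hat\lambda_{i_k} - x_k,\ \hat\lambda_{i_k} - x_k + \mathcal{L}_k({\bf c})]$, the output box is exactly $Q_{k+1} \cap (\tilde{\bf c} + \r^{d_{i_k}})$ (empty fibres produce the zero function on both sides). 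Gluing over $\tilde{\bf c}$ gives $D_{i_k}^{(k)}(Q_k) = Q_{k+1}$. The main obstacle is the rescaling bookkeeping of the last two paragraphs — knowing that ${\rm Conv}(\Psi_{\bf i}(\mathcal{B}^{w_{\ge k}}(\lambda)))$ scales correctly under $\lambda \mapsto m\lambda$ (this is where the parapolytope hypothesis and part (1) enter, through Lemma \ref{l:additivity of lattice points}) and that the recursion passes from rational to real ${\bf c}$; once that is in place, recognising the operator is the short computation forced by $\lambda = \sum_j \hat\lambda_j d_j \alpha_j$.
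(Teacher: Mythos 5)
Your proposal is correct and follows essentially the same route as the paper's proof: identify ${\rm Conv}(\Psi_{\bf i}(\mathcal{B}^{w_{\ge k}}(\lambda)))$ with the face $\{{\bf a} \in \Delta_{\bf i}(\lambda) \mid a_k = x_k, \ldots, a_N = x_N\}$ (whence the parapolytope property), reduce the fibre-wise computation over rational ${\bf c}$ to the integral case by rescaling so that Lemmas \ref{l:nonnegativity} and \ref{l:computation of parallelepiped} apply, verify that the expansion length of $D_{i_k}^{(k)}$ equals $L_k({\bf c}) \ge 0$ via $\sum_s c_{i_k, i_s}\hat{\lambda}_{i_s} = \langle\lambda, h_{i_k}\rangle$, and pass to arbitrary real ${\bf c}$ by continuity of the fibre-endpoint functions (the paper phrases this via convexity on line segments rather than piecewise linearity, but both are valid for polytopes). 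Your treatment is, if anything, slightly more explicit than the paper's on the reverse inclusion $\Delta_{\bf i}(\lambda) \cap \{a_l = x_l\} \subset {\rm Conv}(\Psi_{\bf i}(\mathcal{B}^{w_{\ge k}}(\lambda)))$, which the paper leaves implicit.
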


\begin{proof}
Since $\Delta_{\bf i}(\lambda) = {\rm Conv}(\Psi_{\bf i}(\mathcal{B}(\lambda)))$, Proposition \ref{p:polytopes for opposite} implies that \[{\rm Conv}(\Psi_{\bf i}(\mathcal{B}^{w_{\ge k}}(\lambda))) = \{(a_1, \ldots, a_N) \in \Delta_{\bf i}(\lambda) \mid a_k = x_k, \ldots, a_N = x_N\},\] and hence that this is a parapolytope. In particular, a function \[D_{i_k} ^{(k)} \left(-{\rm Conv}(\Psi_{\bf i}(\mathcal{B}^{w_{\ge k}}(\lambda))) + (\hat{\lambda}_{i_1}, \ldots, \hat{\lambda}_{i_N})\right)\] is well-defined. We will show that 
\begin{align*}
&\left(-{\rm Conv}(\Psi_{\bf i}(\mathcal{B}^{w_{\ge k+1}}(\lambda))) + (\hat{\lambda}_{i_1}, \ldots, \hat{\lambda}_{i_N})\right) \cap (\tilde{\bf c} + \r^{d_{i_k}}) \\
=\ &D_{i_k} ^{(k)} \left(\left(-{\rm Conv}(\Psi_{\bf i}(\mathcal{B}^{w_{\ge k}}(\lambda))) + (\hat{\lambda}_{i_1}, \ldots, \hat{\lambda}_{i_N})\right) \cap (\tilde{\bf c} + \r^{d_{i_k}})\right)
\end{align*}
for all ${\bf c} \in (\r^{d_{i_k}})^\perp$ such that ${\rm Conv}(\Psi_{\bf i}(\mathcal{B}^{w_{\ge k}}(\lambda))) \cap ({\bf c} + \r^{d_{i_k}}) \neq \emptyset$. 

First, we consider the case ${\bf c} \in (\q^{d_{i_k}})^\perp$, where we set $(\q^{d_{i_k}})^\perp \coloneqq (\r^{d_{i_k}})^\perp \cap \q^N$. In this case, we have \[\mu_+ ^{(i_k, k-1)} ({\bf c}), \nu_+ ^{(i_k, k-1)} ({\bf c}), \mu_+ ^{(i_k, k)} ({\bf c}), \nu_+ ^{(i_k, k)} ({\bf c}) \in \q^{d_{i_k}}.\] By the definition of $D_{i_k} ^{(k)}$, it suffices to prove that there exists $l \in \z_{> 0}$ such that 
\begin{align*}
&l \left(-{\rm Conv}(\Psi_{\bf i}(\mathcal{B}^{w_{\ge k+1}}(\lambda))) + (\hat{\lambda}_{i_1}, \ldots, \hat{\lambda}_{i_N})\right) \cap (l\tilde{\bf c} + \r^{d_{i_k}}) \\
=\ &D_{i_k} ^{(k)} \left(l\left(-{\rm Conv}(\Psi_{\bf i}(\mathcal{B}^{w_{\ge k}}(\lambda))) + (\hat{\lambda}_{i_1}, \ldots, \hat{\lambda}_{i_N})\right) \cap (l\tilde{\bf c} + \r^{d_{i_k}})\right).
\end{align*}
From this, we may assume that ${\bf c} \in (\z^{d_{i_k}})^\perp,\ \mu_+ ^{(i_k, k-1)} ({\bf c}), \nu_+ ^{(i_k, k-1)} ({\bf c}), \mu_+ ^{(i_k, k)} ({\bf c}), \nu_+ ^{(i_k, k)} ({\bf c}) \in \z^{d_{i_k}}$. Then, the following equalities hold: 
\begin{align*}
&\mu_+ ^{(i_k, k-1)} ({\bf c}) = \mu^{(i_k, k-1)} ({\bf c}),\ \nu_+ ^{(i_k, k-1)} ({\bf c}) = \nu^{(i_k, k-1)} ({\bf c}),\\
&\mu_+ ^{(i_k, k)} ({\bf c}) = \mu^{(i_k, k)} ({\bf c}),\ \nu_+ ^{(i_k, k)} ({\bf c}) = \nu^{(i_k, k)} ({\bf c}).
\end{align*}
We set 
\begin{align*}
&\tilde{\mu}^{(i_k, l)}({\bf c}) = (\tilde{\mu}_{1} ^{(i_k, l)}({\bf c}), \ldots, \tilde{\mu}_{d_{i_k}} ^{(i_k, l)}({\bf c})) \coloneqq (\hat{\lambda}_{i_k}, \ldots, \hat{\lambda}_{i_k}) -\nu^{(i_k, l)}({\bf c}),\ {\rm and}\\
&\tilde{\nu}^{(i_k, l)}({\bf c}) = (\tilde{\nu}_{1} ^{(i_k, l)}({\bf c}), \ldots, \tilde{\nu}_{d_{i_k}} ^{(i_k, l)}({\bf c})) \coloneqq (\hat{\lambda}_{i_k}, \ldots, \hat{\lambda}_{i_k}) -\mu^{(i_k, l)}({\bf c})
\end{align*}
for $l = k-1, k$. By the definition of $D_{i_k} ^{(k)}$, the polytope \[D_{i_k} ^{(k)} \left(\left(-{\rm Conv}(\Psi_{\bf i}(\mathcal{B}^{w_{\ge k}}(\lambda))) + (\hat{\lambda}_{i_1}, \ldots, \hat{\lambda}_{i_N})\right) \cap (\tilde{\bf c} + \r^{d_{i_k}})\right) = D_{i_k} ^{(k)} \left(\tilde{\bf c} + \Pi(\tilde{\mu}^{(i_k, k-1)} ({\bf c}), \tilde{\nu}^{(i_k, k-1)} ({\bf c}))\right)\] is given by replacing $\tilde{\nu}^{(i_k, k-1)} _{m_k}({\bf c})$ in $\tilde{\nu}^{(i_k, k-1)}({\bf c})$ with \[\hat{\nu}^{(i_k, k-1)} _{m_k}({\bf c}) \coloneqq \tilde{\nu}^{(i_k, k-1)} _{m_k}({\bf c}) -\sum_{1 \le s \le N;\ i_s \neq i_k} c_{i_k, i_s}(\hat{\lambda}_{i_s} -c_s) -\sum_{1 \le l \le d_{i_k}} (\tilde{\mu}_l ^{(i_k, k-1)}({\bf c}) + \tilde{\nu}_l ^{(i_k, k-1)}({\bf c}))\] if $\hat{\nu}^{(i_k, k-1)} _{m_k}({\bf c}) \ge \tilde{\nu}^{(i_k, k-1)} _{m_k}({\bf c})$. Note that 
\begin{align*}
\hat{\nu}^{(i_k, k-1)} _{m_k}({\bf c}) -\tilde{\nu}^{(i_k, k-1)} _{m_k}({\bf c}) &= -\sum_{1 \le s \le N} c_{i_k, i_s} \hat{\lambda}_{i_s}  + \sum_{1 \le s \le N;\ i_s \neq i_k} c_{i_k, i_s} c_s + \sum_{1 \le l \le d_{i_k}} (\mu_l ^{(i_k, k-1)}({\bf c}) + \nu_l ^{(i_k, k-1)}({\bf c}))\\ 
&= L_k ({\bf c})
\end{align*}
since $\sum_{1 \le s \le N} c_{i_k, i_s} \hat{\lambda}_{i_s} = \langle\lambda, h_{i_k}\rangle$ by $\lambda = \sum_{i \in I} \hat{\lambda}_i d_i \alpha_i$. Since $L_k ({\bf c}) \ge 0$ by Lemma \ref{l:nonnegativity}, it follows that $D_{i_k} ^{(k)} (\tilde{\bf c} + \Pi(\tilde{\mu}^{(i_k, k-1)}({\bf c}), \tilde{\nu}^{(i_k, k-1)}({\bf c})))$ is the polytope given by replacing $\tilde{\nu}^{(i_k, k-1)} _{m_k}({\bf c})$ in $\tilde{\bf c} + \Pi(\tilde{\mu}^{(i_k, k-1)}({\bf c}), \tilde{\nu}^{(i_k, k-1)}({\bf c}))$ with $\hat{\nu}^{(i_k, k-1)} _{m_k}({\bf c}) = \tilde{\nu}^{(i_k, k-1)} _{m_k}({\bf c}) + L_k({\bf c})$, which implies by Lemma \ref{l:computation of parallelepiped} that \[D_{i_k} ^{(k)} (\tilde{\bf c} + \Pi(\tilde{\mu}^{(i_k, k-1)} ({\bf c}), \tilde{\nu}^{(i_k, k-1)} ({\bf c}))) = \tilde{\bf c} + \Pi(\tilde{\mu}^{(i_k, k)} ({\bf c}), \tilde{\nu}^{(i_k, k)} ({\bf c})).\]

Second, we consider the case ${\bf c} \in (\r^{d_{i_k}})^\perp$. We regard $\tilde{\mu}_{+, l} ^{(i_k, k-1)} ({\bf c}), \tilde{\nu}_{+, l} ^{(i_k, k-1)} ({\bf c}), \tilde{\mu}_{+, l} ^{(i_k, k)} ({\bf c}), \tilde{\nu}_{+, l} ^{(i_k, k)} ({\bf c})$ for $1 \le l \le d_{i_k}$ as $\r$-valued functions on the lattice polytope \[P_k ({\rm Conv}(\Psi_{\bf i}(\mathcal{B}^{w_{\ge k+1}}(\lambda)))) = P_k ({\rm Conv}(\Psi_{\bf i}(\mathcal{B}^{w_{\ge k}}(\lambda))));\] see the proof of Lemma \ref{l:domain of mu and nu}. Since $-{\rm Conv}(\Psi_{\bf i}(\mathcal{B}^{w_{\ge k +1}}(\lambda))) + (\hat{\lambda}_{i_1}, \ldots, \hat{\lambda}_{i_N})$ and $-{\rm Conv}(\Psi_{\bf i}(\mathcal{B}^{w_{\ge k}}(\lambda))) + (\hat{\lambda}_{i_1}, \ldots, \hat{\lambda}_{i_N})$ are convex, the functions $\tilde{\mu}_{+, l} ^{(i_k, k-1)} ({\bf c}), \tilde{\nu}_{+, l} ^{(i_k, k-1)} ({\bf c}), \tilde{\mu}_{+, l} ^{(i_k, k)} ({\bf c}), \tilde{\nu}_{+, l} ^{(i_k, k)} ({\bf c})$ are (upper or lower) convex on each line segment $S \subset P_k ({\rm Conv}(\Psi_{\bf i}(\mathcal{B}^{w_{\ge k+1}}(\lambda))))$; hence they are continuous on the relative interior of $S$. From this and the assertion in the case ${\bf c} \in (\q^{d_{i_k}})^\perp$, we deduce that 
\begin{align*}
&\tilde{\nu}^{(i_k, k-1)} _{m_k}({\bf c}) \le \hat{\nu}^{(i_k, k-1)} _{m_k}({\bf c}),\\
&D_{i_k} ^{(k)} (\tilde{\bf c} + \Pi(\tilde{\mu}_+ ^{(i_k, k-1)} ({\bf c}), \tilde{\nu}_+ ^{(i_k, k-1)} ({\bf c}))) = \tilde{\bf c} + \Pi(\tilde{\mu}_+ ^{(i_k, k)} ({\bf c}), \tilde{\nu}_+ ^{(i_k, k)} ({\bf c}))
\end{align*}
for all ${\bf c} \in P_k ({\rm Conv}(\Psi_{\bf i}(\mathcal{B}^{w_{\ge k+1}}(\lambda))))$. This proves the lemma.
\end{proof}

Since we have 
\begin{align*}
-{\rm Conv}(\Psi_{\bf i}(\mathcal{B}^{w_{\ge 1}}(\lambda))) + (\hat{\lambda}_{i_1}, \ldots, \hat{\lambda}_{i_N}) &= -{\bf x}_\lambda + (\hat{\lambda}_{i_1}, \ldots, \hat{\lambda}_{i_N})\\
&= {\bf a}_\lambda,
\end{align*}
Lemma \ref{l:divided difference} implies that $D_{i_k} ^{(k)} \cdots D_{i_1} ^{(1)} ({\bf a}_\lambda)$ is a well-defined parapolytope for $1 \le k \le N$, and that the following equality holds for $1 \le k \le N$$:$ \[D_{i_k} ^{(k)} \cdots D_{i_1} ^{(1)} ({\bf a}_\lambda) = -{\rm Conv}(\Psi_{\bf i}(\mathcal{B}^{w_{\ge k +1}}(\lambda))) + (\hat{\lambda}_{i_1}, \ldots, \hat{\lambda}_{i_N}).\] From these, we obtain parts (2), (3) of Theorem \ref{t:main result}. 

\subsection{Immediate consequences}

By Theorem \ref{t:main result} (1) and Lemma \ref{l:additivity of lattice points}, we obtain the following.

\vspace{2mm}\begin{thm}\label{t:corollary 1}
Let ${\bf i} \in I^N$ be a reduced word for $w_0$, and $\lambda, \mu \in P_+$. Assume that the polytopes $\Delta_{\bf i}(\lambda), \Delta_{\bf i}(\mu)$, and $\Delta_{\bf i}(\lambda + \mu)$ are all parapolytopes. Then, the following equalities hold$:$ 
\begin{align*}
&\Psi_{\bf i}(\mathcal{B}(\lambda + \mu)) = \Psi_{\bf i}(\mathcal{B}(\lambda)) + \Psi_{\bf i}(\mathcal{B}(\mu)),\ {\it and}\\
&\Delta_{\bf i}(\lambda + \mu) = \Delta_{\bf i}(\lambda) + \Delta_{\bf i}(\mu).
\end{align*}
\end{thm}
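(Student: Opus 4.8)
The plan is to deduce Theorem \ref{t:corollary 1} directly from the machinery already set up for Theorem \ref{t:main result}, since almost all the work has been done. Observe first that the case $k = N+1$ of Lemma \ref{l:additivity of lattice points} reads $\Psi_{\bf i}(\mathcal{B}^{w_{\ge N+1}}(\lambda_1+\lambda_2)) = \Psi_{\bf i}(\mathcal{B}^{w_{\ge N+1}}(\lambda_1)) + \Psi_{\bf i}(\mathcal{B}^{w_{\ge N+1}}(\lambda_2))$, and by definition $w_{\ge N+1}$ is the identity element of $W$, so $\mathcal{B}^{w_{\ge N+1}}(\lambda) = \mathcal{B}^{e}(\lambda) = \mathcal{B}(\lambda)$ (the opposite Demazure module for the identity is the whole module). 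Hence the first asserted equality, $\Psi_{\bf i}(\mathcal{B}(\lambda+\mu)) = \Psi_{\bf i}(\mathcal{B}(\lambda)) + \Psi_{\bf i}(\mathcal{B}(\mu))$, is exactly Lemma \ref{l:additivity of lattice points} applied with $\lambda_1 = \lambda$, $\lambda_2 = \mu$, $k = N+1$, and the hypothesis that $\Delta_{\bf i}(\lambda), \Delta_{\bf i}(\mu), \Delta_{\bf i}(\lambda+\mu)$ are parapolytopes is precisely what that lemma requires.

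For the second equality, the idea is to pass from lattice points to their convex hulls. By part (1) of Theorem \ref{t:main result}, each of $\Delta_{\bf i}(\lambda)$, $\Delta_{\bf i}(\mu)$, $\Delta_{\bf i}(\lambda+\mu)$ is a lattice polytope (using the parapolytope hypothesis), and the discussion following Lemma \ref{l:additivity of lattice points} already records that in this situation $\Delta_{\bf i}(\nu) = {\rm Conv}(\Psi_{\bf i}(\mathcal{B}(\nu)))$ for $\nu = \lambda, \mu, \lambda+\mu$. Therefore
\begin{align*}
\Delta_{\bf i}(\lambda+\mu) &= {\rm Conv}(\Psi_{\bf i}(\mathcal{B}(\lambda+\mu))) = {\rm Conv}\bigl(\Psi_{\bf i}(\mathcal{B}(\lambda)) + \Psi_{\bf i}(\mathcal{B}(\mu))\bigr)\\
&= {\rm Conv}(\Psi_{\bf i}(\mathcal{B}(\lambda))) + {\rm Conv}(\Psi_{\bf i}(\mathcal{B}(\mu))) = \Delta_{\bf i}(\lambda) + \Delta_{\bf i}(\mu),
\end{align*}
where the middle step uses the elementary fact that the convex hull of a Minkowski sum of two finite sets equals the Minkowski sum of their convex hulls, ${\rm Conv}(X+Y) = {\rm Conv}(X) + {\rm Conv}(Y)$.

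There is essentially no hard obstacle here; the theorem is a harvesting step. The only points requiring a moment's care are (i) correctly identifying $\mathcal{B}^{w_{\ge N+1}}(\lambda)$ with $\mathcal{B}(\lambda)$, which follows from the convention $w_{\ge N+1} = e$ fixed just before Lemma \ref{l:nonnegativity} together with $V^{e}(\lambda) = V(\lambda)$, and (ii) invoking Theorem \ref{t:main result} (1) to know that the relevant Nakashima-Zelevinsky polytopes are lattice polytopes so that $\Delta_{\bf i}(\nu) = {\rm Conv}(\Psi_{\bf i}(\mathcal{B}(\nu)))$; both are already in place in the preceding text, so the proof is short.
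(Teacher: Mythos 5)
Your proposal is correct and follows exactly the route the paper intends: the paper derives Theorem \ref{t:corollary 1} in one line from Theorem \ref{t:main result} (1) and Lemma \ref{l:additivity of lattice points}, and your argument simply spells out those two ingredients (the case $k=N+1$ of the lemma with $\mathcal{B}^{w_{\ge N+1}}(\lambda)=\mathcal{B}(\lambda)$, and passing to convex hulls via $\Delta_{\bf i}(\nu)={\rm Conv}(\Psi_{\bf i}(\mathcal{B}(\nu)))$ together with ${\rm Conv}(X+Y)={\rm Conv}(X)+{\rm Conv}(Y)$). No gaps.
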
\vspace{2mm}

The proof of Theorem \ref{t:main result} implies the following.

\vspace{2mm}\begin{prop}\label{c:corollary 2}
Let ${\bf i} = (i_1, \ldots, i_N) \in I^N$ be a reduced word for $w_0$, $\lambda \in P_+$, and $2 \le k \le N$. Assume that the face $\{{\bf a} \in \Delta_{\bf i}(\lambda) \mid a_k = x_k, \ldots, a_N = x_N\}$ of $\Delta_{\bf i}(\lambda)$ is a parapolytope. 
\begin{enumerate}
\item[{\rm (1)}] The face $\{{\bf a} \in \Delta_{\bf i}(\lambda) \mid a_k = x_k, \ldots, a_N = x_N\}$ is a lattice polytope.
\item[{\rm (2)}] The polytope $D_{i_l} ^{(l)} \cdots D_{i_1} ^{(1)} ({\bf a}_\lambda)$ is well-defined for $1 \le l \le k-1$.
\item[{\rm (3)}] The following equality holds for all $1 \le l \le k-1$$:$ \[D_{i_l} ^{(l)} \cdots D_{i_1} ^{(1)} ({\bf a}_\lambda) = -\{{\bf a} \in \Delta_{\bf i}(\lambda) \mid a_{l+1} = x_{l+1}, \ldots, a_N = x_N\} + (\hat{\lambda}_{i_1}, \ldots, \hat{\lambda}_{i_N}).\] 
\end{enumerate}
\end{prop}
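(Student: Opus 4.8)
The plan is to run the proof of Theorem~\ref{t:main result} but only for the first $k-1$ steps, exploiting that those steps involve only the opposite Demazure crystals $\mathcal{B}^{w_{\ge l}}(\lambda)$ with $1 \le l \le k$, whose relevant box data is controlled once the appropriate face is known to be a parapolytope. Throughout, write $G_l \coloneqq \{{\bf a} \in \Delta_{\bf i}(\lambda) \mid a_s = x_s \text{ for } l \le s \le N\}$, so that $G_l \cap \z^N = \Psi_{\bf i}(\mathcal{B}^{w_{\ge l}}(\lambda))$ by Proposition~\ref{p:polytopes for opposite}, $G_1 = \{{\bf x}_\lambda\}$, and $G_k$ is the face appearing in the hypothesis. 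I would first record the elementary fact that the intersection of a parapolytope with a coordinate hyperplane $\{a_s = c\}$ is again a parapolytope (check the defining condition direction by direction). Since $G_l = G_k \cap \{{\bf a} \mid a_s = x_s,\ l \le s \le k-1\}$ for $1 \le l \le k$, the hypothesis then yields that every $G_l$ with $1 \le l \le k$ is a parapolytope.

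Next I would establish the strengthening of part~(1): each $G_l$ with $1 \le l \le k$ is a lattice polytope, equivalently $G_l = {\rm Conv}(\Psi_{\bf i}(\mathcal{B}^{w_{\ge l}}(\lambda)))$. Since $\Delta_{\bf i}(m\lambda) = m\Delta_{\bf i}(\lambda)$ and the analogue of $x_s$ for $m\lambda$ is $-\langle w_{\ge s}(m\lambda), h_{i_s}\rangle = m x_s$, the face of $\Delta_{\bf i}(m\lambda)$ corresponding to $G_k$ is $mG_k$, again a parapolytope. Hence the part of the inductive argument of Lemma~\ref{l:additivity of lattice points} that concerns only levels $\le k$ applies — at the passage from level $l$ to level $l+1$ it uses only the box data of $\mathcal{B}^{w_{\ge l}}(\bullet)$ and $\mathcal{B}^{w_{\ge l+1}}(\bullet)$, both available from the preceding paragraph — and gives $\Psi_{\bf i}(\mathcal{B}^{w_{\ge l}}(m\lambda)) = \Psi_{\bf i}(\mathcal{B}^{w_{\ge l}}(\lambda)) + \cdots + \Psi_{\bf i}(\mathcal{B}^{w_{\ge l}}(\lambda))$ ($m$ summands) for $1 \le l \le k$ and $m \in \z_{>0}$. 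I would then apply the standard vertex argument: $G_l$ is the intersection of the rational polytope $\Delta_{\bf i}(\lambda)$ (Proposition~\ref{p:convexity}) with a rational affine subspace, hence rational, so a vertex $v$ of $G_l$ satisfies $mv \in \z^N$ for some $m$; then $mv \in mG_l \cap \z^N = \Psi_{\bf i}(\mathcal{B}^{w_{\ge l}}(m\lambda))$ is a sum of $m$ points of $\Psi_{\bf i}(\mathcal{B}^{w_{\ge l}}(\lambda)) \subseteq G_l$, and since $v$ is a vertex each such point equals $v$, so $v \in \z^N$. Part~(1) is the case $l = k$.

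For parts~(2) and~(3) I would reuse the proof of Lemma~\ref{l:divided difference}, applied for each $l$ with $1 \le l \le k-1$. That proof needs, at the step relating level $l$ to level $l+1$, only that ${\rm Conv}(\Psi_{\bf i}(\mathcal{B}^{w_{\ge l}}(\lambda)))$ is a parapolytope together with the unconditional Lemmas~\ref{l:nonnegativity}, \ref{l:computation of parallelepiped}, and~\ref{l:domain of mu and nu}; and by the two previous paragraphs ${\rm Conv}(\Psi_{\bf i}(\mathcal{B}^{w_{\ge l}}(\lambda))) = G_l$ and ${\rm Conv}(\Psi_{\bf i}(\mathcal{B}^{w_{\ge l+1}}(\lambda))) = G_{l+1}$ are parapolytopes when $l, l+1 \le k$. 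Hence, for $1 \le l \le k-1$,
\[
D_{i_l}^{(l)}\bigl(-G_l + (\hat{\lambda}_{i_1}, \ldots, \hat{\lambda}_{i_N})\bigr) = -G_{l+1} + (\hat{\lambda}_{i_1}, \ldots, \hat{\lambda}_{i_N}).
\]
Starting from $-G_1 + (\hat{\lambda}_{i_1}, \ldots, \hat{\lambda}_{i_N}) = -{\bf x}_\lambda + (\hat{\lambda}_{i_1}, \ldots, \hat{\lambda}_{i_N}) = {\bf a}_\lambda$ and iterating this identity, $D_{i_l}^{(l)} \cdots D_{i_1}^{(1)}({\bf a}_\lambda)$ is a well-defined parapolytope and equals $-G_{l+1} + (\hat{\lambda}_{i_1}, \ldots, \hat{\lambda}_{i_N}) = -\{{\bf a} \in \Delta_{\bf i}(\lambda) \mid a_{l+1} = x_{l+1}, \ldots, a_N = x_N\} + (\hat{\lambda}_{i_1}, \ldots, \hat{\lambda}_{i_N})$ for $1 \le l \le k-1$, which is exactly parts~(2) and~(3).

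The main obstacle is bookkeeping: one must verify carefully that the portions of the proofs of Lemmas~\ref{l:additivity of lattice points} and~\ref{l:divided difference} being reused rely only on the parapolytope-ness of the faces $G_l$ with $l \le k$ — all of which follow from that of $G_k$ via the coordinate-hyperplane observation — and never on the global parapolytope-ness of $\Delta_{\bf i}(\lambda)$; concretely, that every box-structure input needed at the passage from level $l$ to level $l+1$ concerns only the opposite Demazure crystals at levels $l$ and $l+1$, both $\le k$. Once this is checked, those two proofs transfer essentially verbatim.
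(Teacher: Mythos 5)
Your proposal is correct and is essentially the paper's own argument: the paper proves Proposition \ref{c:corollary 2} by simply truncating the proof of Theorem \ref{t:main result} after the first $k-1$ steps, and your write-up spells out exactly the bookkeeping that makes this legitimate (coordinate-hyperplane slices of a parapolytope are parapolytopes, so the single hypothesis on the face at level $k$ supplies all the box data needed at levels $\le k$ in Lemmas \ref{l:additivity of lattice points}, \ref{l:domain of mu and nu}, and \ref{l:divided difference}). No gaps.
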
\vspace{2mm}

\section{Crystal structures}

In this section, we study the crystal structure on the set of lattice points in $\Delta_{\bf i} (\lambda)$. Recall that $e_i, f_i, h_i \in \mathfrak{g}$, $i \in I$, are the Chevalley generators such that $\{e_i, h_i \mid i \in I\} \subset {\rm Lie}(B)$ and $\{f_i, h_i \mid i \in I\} \subset {\rm Lie}(B^-)$. For $i\in I$, let $\mathfrak{g}_i$ be the Lie subalgebra of $\mathfrak{g}$ generated by $e_i, f_i, h_i$, which is isomorphic to $\mathfrak{sl}_2(\mathbb{C})$ as a Lie algebra. For $m \in \z_{\ge 0}$, we denote by $\mathcal{B}^{(i)}(m)$ the crystal basis for the $(m+1)$-dimensional irreducible $\mathfrak{g}_i$-module with highest weight element $b_m$. We fix $i \in I$ and ${\bf c} \in (\z^{d_i})^\perp$ such that $\Psi_{\bf i} (\mathcal{B}(\lambda)) \cap ({\bf c} + \z^{d_i}) \neq \emptyset$. Recall that $\mu^{(i)} ({\bf c}) = (\mu_1 ^{(i)}({\bf c}), \ldots, \mu_{d_i} ^{(i)}({\bf c}))$, $\nu^{(i)} ({\bf c}) = (\nu_1 ^{(i)}({\bf c}), \ldots, \nu_{d_i} ^{(i)}({\bf c})) \in \z^{d_i}$ are uniquely determined by
\begin{align*}
&\Psi_{\bf i} (\mathcal{B}(\lambda)) \cap ({\bf c} + \z^{d_i}) = {\bf c} + \Pi_\z(\mu^{(i)} ({\bf c}), \nu^{(i)} ({\bf c})).
\end{align*} 
We define a bijective map \[\eta_i \colon \mathcal{B}(\lambda) \cap \Psi_{\bf i} ^{-1} ({\bf c} + \z^{d_i}) \xrightarrow{\sim} \mathcal{B}^{(i)}(\nu_1 ^{(i)}({\bf c}) - \mu_1 ^{(i)}({\bf c})) \otimes \cdots \otimes \mathcal{B}^{(i)}(\nu_{d_i} ^{(i)}({\bf c}) - \mu_{d_i} ^{(i)}({\bf c}))\] by \[\eta_i (b) \coloneqq \tilde{f}_i ^{a_1 ^{(i)} -\mu_1 ^{(i)}({\bf c})} b_{\nu_1 ^{(i)}({\bf c}) - \mu_1 ^{(i)}({\bf c})} \otimes \cdots \otimes \tilde{f}_i ^{a_{d_i} ^{(i)} -\mu_{d_i} ^{(i)}({\bf c})} b_{\nu_{d_i} ^{(i)}({\bf c}) - \mu_{d_i} ^{(i)}({\bf c})}\] when $\Psi_{\bf i} (b) = {\bf c} + (a^{(i)} _1, \ldots, a^{(i)} _{d_i})$ in ${\bf c} + \z^{d_i}$. 

\vspace{2mm}\begin{prop}
The map $\eta_i$ is an isomorphism of $\mathfrak{g}_i$-crystals.
\end{prop}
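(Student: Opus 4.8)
The plan is to show that $\eta_i$ is a strict morphism of $\mathfrak{g}_i$-crystals, and since it is already bijective by construction, this will make it an isomorphism. The key point is that $\mathcal{B}(\lambda)$, restricted to the $i$-arrows, decomposes into $i$-strings, and the subset $\mathcal{B}(\lambda) \cap \Psi_{\bf i}^{-1}({\bf c} + \z^{d_i})$ is stable under $\tilde{e}_i$ and $\tilde{f}_i$ (as already noted in Section 4, using the crystal structure on $\z^\infty_{\bf j} \otimes R_\lambda$). So I would work entirely inside this stable subset.

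First I would recall, via the identification of the crystal $\z^\infty_{\bf j} \otimes R_\lambda$ with an iterated tensor product $\z^\infty_{{\bf j}_{\ge N+1}} \otimes \widetilde{\mathcal{B}}_{i_1} \otimes \cdots \otimes \widetilde{\mathcal{B}}_{i_N} \otimes R_\lambda$, that for fixed ${\bf c} \in (\z^{d_i})^\perp$ the $i$-action on $\mathcal{B}(\lambda) \cap \Psi_{\bf i}^{-1}({\bf c}+\z^{d_i})$ factors through the $d_i$ tensor slots indexed by the positions $s$ with $i_s = i$, all other slots being inert for $\tilde e_i, \tilde f_i$ (the factors $\widetilde{\mathcal{B}}_{j}$ with $j \neq i$ and $R_\lambda$ contribute $-\infty$ to $\varepsilon_i,\varphi_i$). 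Hence as a $\mathfrak{g}_i$-crystal this subset is isomorphic to a tensor product of $d_i$ crystals $\widetilde{\mathcal{B}}_i$, and the condition $\Psi_{\bf i}(\mathcal{B}(\lambda)) \cap ({\bf c}+\z^{d_i}) = {\bf c} + \Pi_\z(\mu^{(i)}({\bf c}),\nu^{(i)}({\bf c}))$ says exactly that in the $l$-th such slot the coordinate $a_l^{(i)}$ ranges over $[\mu_l^{(i)}({\bf c}), \nu_l^{(i)}({\bf c})] \cap \z$. Matching the $\mathfrak{g}_i$-crystal $\widetilde{\mathcal{B}}_i$ truncated to $\{x : \mu_l \le x \le \nu_l\}$ with $\mathcal{B}^{(i)}(\nu_l - \mu_l)$ via $x \mapsto \tilde f_i^{x-\mu_l} b_{\nu_l-\mu_l}$ is the standard dictionary between $\mathfrak{sl}_2$-crystals and intervals; under this, $\tilde f_i$ acts by $x \mapsto x+1$ on the source and $\eta_i$ sends it to the corresponding $\tilde f_i$ on the target tensor factor.

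Concretely, I would verify the three conditions in the definition of strict morphism. For condition (iii) ($\tilde e_i \eta_i(b) = \eta_i(\tilde e_i b)$ and similarly for $\tilde f_i$): by the tensor product rule applied to the $d_i$-fold tensor product $\widetilde{\mathcal{B}}_i^{\otimes d_i}$ truncated to the box $\Pi_\z$, the Kashiwara operator $\tilde f_i$ (resp. $\tilde e_i$) acts by increasing (resp. decreasing) the coordinate in the unique slot selected by the $\min M^{(i)}$ (resp. $\max M^{(i)}$) rule; the same rule governs which tensor factor of $\mathcal{B}^{(i)}(\nu_1-\mu_1) \otimes \cdots \otimes \mathcal{B}^{(i)}(\nu_{d_i}-\mu_{d_i})$ is acted on, because $\eta_i$ is coordinatewise the interval-to-$\mathfrak{sl}_2$-crystal isomorphism and the tensor product rule depends only on the pairs $(\varepsilon_i, \varphi_i)$ of the factors, which $\eta_i$ preserves slot by slot. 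For condition (ii) (preservation of $\varepsilon_i, \varphi_i$, and of ${\rm wt}$ up to the $\mathfrak{g}_i$-restriction): $\varepsilon_i(b) = \max_l\{\sigma\text{-type expressions}\}$ computed from the box coordinates matches $\varepsilon_i$ of the tensor product of $\mathcal{B}^{(i)}$'s by the same tensor rule, and the $\mathfrak{g}_i$-weight, i.e. $\langle {\rm wt}, h_i\rangle = \varphi_i - \varepsilon_i$, is then automatic; condition (i), $\eta_i(0) = 0$, is vacuous here since we restrict to the stable subset where no element maps outside.

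The main obstacle I expect is bookkeeping: making fully precise the claim that $\tilde e_i, \tilde f_i$ on $\mathcal{B}(\lambda) \cap \Psi_{\bf i}^{-1}({\bf c}+\z^{d_i})$ are computed by the tensor product rule on the $d_i$ relevant slots \emph{in the correct order} — the tensor factors in $\z^\infty_{\bf j} \otimes R_\lambda$ are ordered by ${\bf j} = (\ldots, j_N, \ldots, j_1)$, and one must check that the ordering of the factors $\mathcal{B}^{(i)}(\nu_l - \mu_l)$, $l = 1,\ldots,d_i$, in the target of $\eta_i$ is consistent with this, i.e. that the slot labelled $l$ corresponds to the $l$-th occurrence of $i$ reading the reduced word ${\bf i}$ in the appropriate direction. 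Once the indexing convention is pinned down (and it is forced by the formula $\Psi_{\bf i}(b) = {\bf c} + (a_1^{(i)}, \ldots, a_{d_i}^{(i)})$ in Definition~\ref{definition2} together with the identification $\r^N \cong \r^{d_1} \oplus \cdots \oplus \r^{d_n}$ from Section~2), the verification of (i)--(iii) is a direct, if slightly tedious, application of the tensor product rule, so the proof is essentially a careful unwinding of definitions rather than a substantive new argument.
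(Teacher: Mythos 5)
There is a genuine gap at the central step of your argument, namely the claim that the tensor product rule ``depends only on the pairs $(\varepsilon_i,\varphi_i)$ of the factors, which $\eta_i$ preserves slot by slot.'' Neither half of this is true. First, $\eta_i$ does \emph{not} preserve the slot-wise data: in the source the $l$-th relevant slot is $(a^{(i)}_l)_i \in \widetilde{\mathcal{B}}_i$, with $\varepsilon_i = a^{(i)}_l$, $\varphi_i = -a^{(i)}_l$, and $i$-weight $-2a^{(i)}_l$, whereas in the target it is $\tilde f_i^{\,a^{(i)}_l - \mu_l} b_{\nu_l - \mu_l} \in \mathcal{B}^{(i)}(\nu_l - \mu_l)$, with $\varepsilon_i = a^{(i)}_l - \mu_l$, $\varphi_i = \nu_l - a^{(i)}_l$, and $i$-weight $\mu_l + \nu_l - 2a^{(i)}_l$ (writing $\mu_l,\nu_l$ for $\mu^{(i)}_l({\bf c}),\nu^{(i)}_l({\bf c})$). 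The ``interval-to-$\mathfrak{sl}_2$-crystal dictionary'' you invoke is a bijection compatible with $\tilde e_i,\tilde f_i$ inside the interval, but it is not an isomorphism of crystals, precisely because it shifts $\varepsilon_i$ and $\varphi_i$ by $\mu_l$ and $\mu_l+\nu_l$. Second, the interleaved factors $\widetilde{\mathcal{B}}_{j}$ with $j\neq i$ (and the ambient coordinates ${\bf c}$) are not inert for the $i$-structure: although they have $\varepsilon_i=\varphi_i=-\infty$, their weights enter the tensor product rule through $\varepsilon_i(b_2)-\langle{\rm wt}(b_1),h_i\rangle$, equivalently through the terms $c_{i,j_l}a_l$ in $\sigma_k$. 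Consequently the slot selected by the $\min M^{(i)}/\max M^{(i)}$ rule in $\z^\infty_{\bf j}\otimes R_\lambda$ and the slot selected by the tensor product rule in $\mathcal{B}^{(i)}(\nu_1-\mu_1)\otimes\cdots\otimes\mathcal{B}^{(i)}(\nu_{d_i}-\mu_{d_i})$ are computed from genuinely different data, and there is no formal reason for them to coincide. That they do coincide is the actual content of the proposition: it encodes a nontrivial relation between ${\bf c}$, the Cartan integers, and the box parameters $\mu_l({\bf c}),\nu_l({\bf c})$, which must be extracted from the structure of $\mathcal{B}(\lambda)$ and not merely from the shape of the image.

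The paper supplies exactly this missing input. Since the extreme $i$-weights on the box ${\bf c}+\Pi_\z(\mu^{(i)}({\bf c}),\nu^{(i)}({\bf c}))$ are attained only at its two corners, $\mathfrak{sl}_2$ representation theory shows that the corresponding elements $b_{\rm high}$ and $b_{\rm low}$ are the two ends of a single $i$-string, along which the compatibility $\eta_i(\tilde e_i^{\,k} b_{\rm low})=\tilde e_i^{\,k}\eta_i(b_{\rm low})$ can be checked directly. One then proves, by induction on the relevant tensor slots, that the quantities governing the tensor product rule --- $\varepsilon_i$ of the partial tensor products up to slot $k$, and the differences $\varepsilon_i(\text{slot }k)-\varphi_i(\text{partial product up to }k-1)$ --- agree for $b$ and for $\eta_i(b)$, by comparing an arbitrary $b$ with suitably chosen elements $b',b''$ of that distinguished string. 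Your proposal would need this (or an equivalent) argument; as written, the step where the two tensor product rules are matched does not go through.
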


\begin{proof}
It suffices to prove that $\eta_i$ is compatible with the actions of $\tilde{e}_i$ and $\tilde{f}_i$. We show that $\eta_i (\tilde{e}_i b) = \tilde{e}_i \eta_i (b)$ for all $b \in \mathcal{B}(\lambda) \cap \Psi_{\bf i} ^{-1} ({\bf c} + \z^{d_i})$, where we set $\eta_i (0) \coloneqq 0$ if $\tilde{e}_i b = 0$; a proof of the compatibility with $\tilde{f}_i$ is similar. Let $b_{\rm high}$ (resp., $b_{\rm low}$) be the unique element in $\mathcal{B}(\lambda) \cap \Psi_{\bf i} ^{-1} ({\bf c} + \z^{d_i})$ such that $\Psi_{\bf i}(b_{\rm high}) = {\bf c} + \mu^{(i)}({\bf c})$ (resp., $\Psi_{\bf i}(b_{\rm low}) = {\bf c} + \nu^{(i)}({\bf c})$). Considering the weights of elements in the $\mathfrak{g}_i$-crystal $\mathcal{B}(\lambda) \cap \Psi_{\bf i} ^{-1} ({\bf c} + \z^{d_i})$, the standard representation theory of $\mathfrak{sl}_2(\mathbb{C})$ implies that $b_{\rm high}$ is the highest weight element in the $i$-string through $b_{\rm low}$. By the crystal structure on $\z_{\bf j} ^\infty \otimes R_\lambda$, this implies that 
\begin{equation}\label{eq:compatibility of longest}
\begin{aligned}
\eta_i (\tilde{e}_i ^k b_{\rm low}) = \tilde{e}_i ^k \eta_i (b_{\rm low}) 
\end{aligned}
\end{equation}
for all $k \in \z_{\ge 0}$. We set \[\{s_1 < \cdots < s_{d_i}\} \coloneqq \{1 \le s \le N \mid i_s = i\}.\] For $b \in \mathcal{B}(\lambda) \cap \Psi_{\bf i} ^{-1} ({\bf c} + \z^{d_i})$, define $\Upsilon_1 (b), \Upsilon_2 (b), \ldots, \Upsilon_{d_i + 1} (b)$ by
\begin{align*}
\widetilde{\Psi}_{\bf j} (b) &= \Upsilon_1 (b) \otimes \Upsilon_2 (b) \otimes \cdots \otimes \Upsilon_{d_i + 1} (b)\\
&\in (\z_{{\bf j}_{\ge N+1}} ^\infty \otimes \widetilde{\mathcal{B}}_{i_1} \otimes \cdots \otimes \widetilde{\mathcal{B}}_{i_{s_1}}) \otimes (\widetilde{\mathcal{B}}_{i_{s_1 + 1}} \otimes \cdots \otimes \widetilde{\mathcal{B}}_{i_{s_2}}) \otimes \cdots \otimes (\widetilde{\mathcal{B}}_{i_{s_{d_i} + 1}} \otimes \cdots \otimes \widetilde{\mathcal{B}}_{i_N} \otimes R_\lambda),
\end{align*}
and set $\Upsilon_{\le k} (b) \coloneqq \Upsilon_1 (b) \otimes \Upsilon_2 (b) \otimes \cdots \otimes \Upsilon_{k} (b)$ for $1 \le k \le d_i$. In addition, for $b \in \mathcal{B}^{(i)}(\nu_1 ^{(i)}({\bf c}) - \mu_1 ^{(i)}({\bf c})) \otimes \cdots \otimes \mathcal{B}^{(i)}(\nu_{d_i} ^{(i)}({\bf c}) - \mu_{d_i} ^{(i)}({\bf c}))$, we define $\Upsilon_1 (b), \Upsilon_2 (b), \ldots, \Upsilon_{d_i} (b)$ by
\begin{align*}
b &= \Upsilon_1 (b) \otimes \Upsilon_2 (b) \otimes \cdots \otimes \Upsilon_{d_i} (b)\\
&\in \mathcal{B}^{(i)}(\nu_1 ^{(i)}({\bf c}) - \mu_1 ^{(i)}({\bf c})) \otimes \mathcal{B}^{(i)}(\nu_2 ^{(i)}({\bf c}) - \mu_2 ^{(i)}({\bf c})) \otimes \cdots \otimes \mathcal{B}^{(i)}(\nu_{d_i} ^{(i)}({\bf c}) - \mu_{d_i} ^{(i)}({\bf c})),
\end{align*}
and set $\Upsilon_{\le k} (b) \coloneqq \Upsilon_1 (b) \otimes \Upsilon_2 (b) \otimes \cdots \otimes \Upsilon_{k} (b)$ for $1 \le k \le d_i$. By the tensor product rule for crystals, it suffices to prove that 
\begin{align*}
&\varepsilon_i (\Upsilon_{\le k} (b)) = \varepsilon_i (\Upsilon_{\le k} (\eta_i (b))),\ 1 \le k \le d_i,\\ 
&\varepsilon_i (\Upsilon_k (b)) -\varphi_i (\Upsilon_{\le k-1} (b)) = \varepsilon_i (\Upsilon_k (\eta_i(b))) -\varphi_i (\Upsilon_{\le k-1} (\eta_i(b))),\ 2 \le k \le d_i,
\end{align*} 
for $b \in \mathcal{B}(\lambda) \cap \Psi_{\bf i} ^{-1} ({\bf c} + \z^{d_i})$. We proceed by induction on $k$. 

If $k = 1$, then we take $b'$ in the $i$-string through $b_{\rm low}$ such that $\Upsilon_1 (b') = \Upsilon_1 (b)$; the existence of $b'$ follows by \eqref{eq:compatibility of longest}. Then, we deduce that 
\begin{align*}
\varepsilon_i (\Upsilon_{\le 1} (b)) &= \varepsilon_i (\Upsilon_1 (b'))\\
&= \varepsilon_i (\Upsilon_1 (\eta_i (b')))\quad({\rm by}\ \eqref{eq:compatibility of longest})\\
&= \varepsilon_i (\Upsilon_1 (\eta_i (b)))\quad({\rm by\ the\ definition\ of}\ \eta_i)\\
&= \varepsilon_i (\Upsilon_{\le 1} (\eta_i (b))).
\end{align*}

If $k \ge 2$, then we take $b''$ in the $i$-string through $b_{\rm low}$ such that \[\Upsilon_{\le k-1} (b'') = \Upsilon_{\le k-1} (b_{\rm low}),\ \Upsilon_k (b'') = \Upsilon_k (b);\] the existence of $b''$ follows by \eqref{eq:compatibility of longest}. Then, it follows that 
\begin{equation}\label{eq:tensor product compatibility 1}
\begin{aligned}
\varepsilon_i (\Upsilon_k (b)) -\varphi_i (\Upsilon_{\le k-1} (b)) =\ &\varepsilon_i (\Upsilon_k (b)) - \varphi_i (\Upsilon_{\le k-1} (b'')) + \varphi_i (\Upsilon_{\le k-1} (b'')) - \varphi_i (\Upsilon_{\le k-1} (b))\\
=\ &\varepsilon_i (\Upsilon_k (b'')) - \varphi_i (\Upsilon_{\le k-1} (b'')) + \varphi_i (\Upsilon_{\le k-1} (b_{\rm low})) - \varphi_i (\Upsilon_{\le k-1} (b))\\
=\ &\varepsilon_i (\Upsilon_k (b'')) - \varphi_i (\Upsilon_{\le k-1} (b'')) + \varepsilon_i (\Upsilon_{\le k-1} (b_{\rm low})) - \varepsilon_i (\Upsilon_{\le k-1} (b))\\ 
&+ \langle {\rm wt}(\Upsilon_{\le k-1} (b_{\rm low})), h_i \rangle - \langle {\rm wt}(\Upsilon_{\le k-1} (b)), h_i \rangle.
\end{aligned}
\end{equation}
Note that the following equality holds by \eqref{eq:compatibility of longest}:
\begin{equation}\label{eq:tensor product compatibility 2}
\begin{aligned}
\varepsilon_i (\Upsilon_k (b'')) - \varphi_i (\Upsilon_{\le k-1} (b'')) = \varepsilon_i (\Upsilon_k (\eta_i (b''))) - \varphi_i (\Upsilon_{\le k-1} (\eta_i (b''))). 
\end{aligned}
\end{equation}
In addition, we deduce by \eqref{eq:compatibility of longest} and by the induction hypothesis that 
\begin{equation}\label{eq:tensor product compatibility 3}
\begin{aligned}
\varepsilon_i (\Upsilon_{\le k-1} (b_{\rm low})) - \varepsilon_i (\Upsilon_{\le k-1} (b)) = \varepsilon_i (\Upsilon_{\le k-1} (\eta_i (b_{\rm low}))) - \varepsilon_i (\Upsilon_{\le k-1} (\eta_i (b))).
\end{aligned}
\end{equation}
If we write $\Psi_{\bf i} (b) = {\bf c} + (a^{(i)} _1, \ldots, a^{(i)} _{d_i})$ in ${\bf c} + \z^{d_i}$, then we have 
\begin{equation}\label{eq:tensor product compatibility 4}
\begin{aligned}
&\langle {\rm wt}(\Upsilon_{\le k-1} (b_{\rm low})), h_i \rangle - \langle {\rm wt}(\Upsilon_{\le k-1} (b)), h_i \rangle\\ 
=\ &2 \sum_{1 \le l \le k-1} (a^{(i)} _l - \nu^{(i)} _l ({\bf c}))\\
=\ &\langle {\rm wt}(\Upsilon_{\le k-1} (\eta_i (b_{\rm low}))), h_i \rangle - \langle {\rm wt}(\Upsilon_{\le k-1} (\eta_i (b))), h_i \rangle
\end{aligned}
\end{equation}
by the definition of $\eta_i$. By \eqref{eq:tensor product compatibility 1}--\eqref{eq:tensor product compatibility 4}, it follows that 
\begin{align*}
&\ \ \ \ \varepsilon_i (\Upsilon_k (b)) -\varphi_i (\Upsilon_{\le k-1} (b))\\ 
&= \varepsilon_i (\Upsilon_k (\eta_i (b''))) - \varphi_i (\Upsilon_{\le k-1} (\eta_i (b''))) + \varepsilon_i (\Upsilon_{\le k-1} (\eta_i (b_{\rm low}))) - \varepsilon_i (\Upsilon_{\le k-1} (\eta_i (b)))\\ 
&\ \ \ \ + \langle {\rm wt}(\Upsilon_{\le k-1} (\eta_i (b_{\rm low}))), h_i \rangle - \langle {\rm wt}(\Upsilon_{\le k-1} (\eta_i (b))), h_i \rangle\\
&= \varepsilon_i (\Upsilon_k (\eta_i (b''))) - \varphi_i (\Upsilon_{\le k-1} (\eta_i (b''))) + \varphi_i (\Upsilon_{\le k-1} (\eta_i (b_{\rm low}))) - \varphi_i (\Upsilon_{\le k-1} (\eta_i (b)))\\
&= \varepsilon_i (\Upsilon_k (\eta_i (b))) - \varphi_i (\Upsilon_{\le k-1} (\eta_i (b''))) + \varphi_i (\Upsilon_{\le k-1} (\eta_i (b''))) - \varphi_i (\Upsilon_{\le k-1} (\eta_i (b)))\\
&({\rm by\ the\ definition\ of}\ \eta_i)\\
&= \varepsilon_i (\Upsilon_k (\eta_i (b))) - \varphi_i (\Upsilon_{\le k-1} (\eta_i (b))),
\end{align*}
and hence that
\begin{align*}
\varepsilon_i (\Upsilon_{\le k} (b)) &= \max\{\varepsilon_i (\Upsilon_{\le k-1} (b)), \varepsilon_i (\Upsilon_{\le k-1} (b)) + \varepsilon_i (\Upsilon_k (b)) -\varphi_i (\Upsilon_{\le k-1} (b))\}\\
&({\rm by\ the\ tensor\ product\ rule\ for\ crystals})\\
&= \max\{\varepsilon_i (\Upsilon_{\le k-1} (\eta_i (b))), \varepsilon_i (\Upsilon_{\le k-1} (\eta_i (b))) + \varepsilon_i (\Upsilon_k (\eta_i (b))) -\varphi_i (\Upsilon_{\le k-1} (\eta_i (b)))\}\\
&= \varepsilon_i (\Upsilon_{\le k} (\eta_i (b))). 
\end{align*}
This proves the proposition.
\end{proof}

\section{Geometric applications}

In this section, we discuss toric degenerations arising from Nakashima-Zelevinsky polytopes by the theory of Newton-Okounkov bodies \cite{And}. We start with recalling the main result of \cite{FN}, which states that $\Delta_{\bf i} (\lambda)$ is identical to the Newton-Okounkov body of the full flag variety $G/B$ associated with a specific valuation. For $\lambda \in P_+$, we define a line bundle $\mathcal{L}_\lambda$ on $G/B$ by \[\mathcal{L}_\lambda \coloneqq (G \times \mathbb{C})/B,\] where $B$ acts on $G \times \mathbb{C}$ on the right as follows: \[(g, c) \cdot b = (g b, \lambda(b) c)\] for $g \in G$, $c \in \mathbb{C}$, and $b \in B$. Take a reduced word ${\bf i} = (i_1, \ldots, i_N) \in I^N$ for the longest element $w_0 \in W$. We see by \cite[Ch.\ I\hspace{-.1em}I.13]{Jan} that the morphism \[\c^N \rightarrow G/B,\ (t_1, \ldots, t_N) \mapsto \exp(t_1 f_{i_1}) \cdots \exp(t_N f_{i_N}) \bmod B,\] is birational. Hence the function field $\mathbb{C}(G/B)$ is identified with the rational function field $\mathbb{C}(t_1, \ldots, t_N)$. 

\vspace{2mm}\begin{defi}\label{d:valuations}\normalfont
We define a lexicographic order $\prec$ on $\mathbb{Z}^N$ as follows: $(a_1, \ldots, a_N) \prec (a_1 ^\prime, \ldots, a_N ^\prime)$ if and only if there exists $1 \le k \le N$ such that $a_N = a_N ^\prime, \ldots, a_{k+1} = a_{k+1} ^\prime$, $a_k < a_k ^\prime$. The lexicographic order $\prec$ on $\mathbb{Z}^N$ induces a total order (denoted by the same symbol $\prec$) on the set of monomials in the polynomial ring $\mathbb{C}[t_1, \ldots, t_N]$ as follows: $t_1 ^{a_1} \cdots t_N ^{a_N} \prec t_1 ^{a_1 ^\prime} \cdots t_N ^{a_N ^\prime}$ if and only if $(a_1, \ldots, a_N) \prec (a_1 ^\prime, \ldots, a_N ^\prime)$. Let us define a valuation $v_{{\bf i}, \prec} ^{\rm high} \colon \mathbb{C}(G/B) \setminus \{0\} \rightarrow \mathbb{Z}^N$ by $v_{{\bf i}, \prec} ^{\rm high} (f/g) \coloneqq v_{{\bf i}, \prec} ^{\rm high} (f) - v_{{\bf i}, \prec} ^{\rm high} (g)$ for $f, g \in \mathbb{C}[t_1, \ldots, t_N] \setminus \{0\}$, and by 
\begin{align*}
&v_{{\bf i}, \prec} ^{\rm high} (f) \coloneqq -(a_1, \ldots, a_N)\ {\rm for}\ f = c t_1 ^{a_1} \cdots t_N ^{a_N} + ({\rm lower\ terms}) \in \mathbb{C}[t_1, \ldots, t_N] \setminus \{0\},
\end{align*}
where $c \in \mathbb{C} \setminus \{0\}$, and we mean by ``lower terms'' a linear combination of monomials smaller than $t_1 ^{a_1} \cdots t_N ^{a_N}$ with respect to the total order $\prec$. 
\end{defi}

\vspace{2mm}\begin{defi}[{see \cite[Sect.\ 1.2]{Kav} and \cite[Definition 1.10]{KK2}}]\normalfont\label{d:Newton-Okounkov polytopes}
Let ${\bf i} \in I^N$ be a reduced word for $w_0$, and $\lambda \in P_+$. Take a nonzero section $\tau \in H^0(G/B, \mathcal{L}_\lambda)$. We define a subset $S(G/B, \mathcal{L}_\lambda, v_{{\bf i}, \prec} ^{\rm high}, \tau) \subset \mathbb{Z}_{>0} \times \mathbb{Z}^N$ by \[S(G/B, \mathcal{L}_\lambda, v_{{\bf i}, \prec} ^{\rm high}, \tau) \coloneqq \bigcup_{k>0} \{(k, v_{{\bf i}, \prec} ^{\rm high}(\sigma/\tau^k)) \mid \sigma \in H^0(G/B, \mathcal{L}_\lambda ^{\otimes k}) \setminus \{0\}\},\] and denote by $C(G/B, \mathcal{L}_\lambda, v_{{\bf i}, \prec} ^{\rm high}, \tau) \subset \mathbb{R}_{\ge 0} \times \mathbb{R}^N$ the smallest real closed cone containing $S(G/B, \mathcal{L}_\lambda, v_{{\bf i}, \prec} ^{\rm high}, \tau)$. Let us define a subset $\Delta(G/B, \mathcal{L}_\lambda, v_{{\bf i}, \prec} ^{\rm high}, \tau) \subset \mathbb{R}^N$ by \[\Delta(G/B, \mathcal{L}_\lambda, v_{{\bf i}, \prec} ^{\rm high}, \tau) \coloneqq \{{\bf a} \in \mathbb{R}^N \mid (1, {\bf a}) \in C(G/B, \mathcal{L}_\lambda, v_{{\bf i}, \prec} ^{\rm high}, \tau)\};\]
this is called the \emph{Newton-Okounkov body} of $G/B$ associated with $\mathcal{L}_\lambda$, $v_{{\bf i}, \prec} ^{\rm high}$, and $\tau$.
\end{defi}\vspace{2mm}

We define an $\r$-linear automorphism $\omega \colon \mathbb{R} \times \mathbb{R}^N \xrightarrow{\sim} \mathbb{R} \times \mathbb{R}^N$ by $\omega(k, {\bf a}) \coloneqq (k, -{\bf a})$. 

\vspace{2mm}\begin{thm}[{see \cite[Sect.\ 4]{FN}}]\label{t:NOBY polyhedral realizations}
Let ${\bf i} \in I^N$ be a reduced word for $w_0$, and $\lambda \in P_+$. Then, there exists a nonzero section $\tau_\lambda \in H^0(G/B, \mathcal{L}_\lambda)$ such that the following equalities hold$:$
\begin{align*}
&\mathcal{S}_{\bf i} (\lambda) = \omega(S(G/B, \mathcal{L}_\lambda, v_{{\bf i}, \prec} ^{\rm high}, \tau_\lambda)),\ \mathcal{C}_{\bf i} (\lambda) = \omega(C(G/B, \mathcal{L}_\lambda, v_{{\bf i}, \prec} ^{\rm high}, \tau_\lambda)),\ {\it and}\\
&\Delta_{\bf i} (\lambda) = -\Delta(G/B, \mathcal{L}_\lambda, v_{{\bf i}, \prec} ^{\rm high}, \tau_\lambda).
\end{align*}
\end{thm}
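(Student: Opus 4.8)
The plan is to realize $\mathcal{S}_{\bf i} (\lambda)$ as a semigroup of $\prec$-leading exponents of matrix coefficients on the big cell of $G/B$, and to match it term by term with $S(G/B, \mathcal{L}_\lambda, v_{{\bf i}, \prec} ^{\rm high}, \tau_\lambda)$. By the Borel--Weil theorem we identify $H^0(G/B, \mathcal{L}_\lambda ^{\otimes k}) = H^0(G/B, \mathcal{L}_{k\lambda})$ with $V(k\lambda)^\ast$; under this identification the lower global basis $\{G^{\rm low} _{k\lambda} (b) \mid b \in \mathcal{B}(k\lambda)\}$ of $V(k\lambda)$ yields a dual basis $\{\sigma_b \mid b \in \mathcal{B}(k\lambda)\} \subset H^0(G/B, \mathcal{L}_{k\lambda})$ of sections. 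Put $\tau_\lambda \coloneqq \sigma_{b_\lambda}$; in the standard trivialization of $\mathcal{L}_\lambda$ over the big cell $U^- \cdot B/B$ this section restricts to a nowhere-vanishing (in fact constant) function, and $\tau_\lambda ^{\otimes k}$ plays the same role for $k\lambda$ (it agrees with $\sigma_{b_{k\lambda}}$ up to a nonzero scalar), so the normalization is compatible across $k$. Pulling back along the birational chart $(t_1, \ldots, t_N) \mapsto \exp(t_1 f_{i_1}) \cdots \exp(t_N f_{i_N}) \bmod B$ of \cite[Ch.\ I\hspace{-.1em}I.13]{Jan}, the rational function $\sigma_b / \tau_\lambda ^k$ restricts on the big cell to (a nonzero scalar times) the polynomial \[p_b (t_1, \ldots, t_N) \coloneqq \sigma_b \bigl(\exp(t_1 f_{i_1}) \cdots \exp(t_N f_{i_N}) v_{k\lambda}\bigr) \in \c[t_1, \ldots, t_N],\] where $\sigma_b$ is regarded as an element of $V(k\lambda)^\ast$. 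The theorem then reduces to the single assertion that the $\prec$-leading exponent of $p_b$ equals $\Psi_{\bf i} (b)$, i.e.\ that $v_{{\bf i}, \prec} ^{\rm high} (\sigma_b / \tau_\lambda ^k) = -\Psi_{\bf i} (b)$ for all $b \in \mathcal{B}(k\lambda)$.

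To prove this, I would expand the product of exponentials from the right, first $\exp(t_N f_{i_N})$, then $\exp(t_{N-1} f_{i_{N-1}})$, and so on, which matches the lexicographic order $\prec$ giving $t_N$ the highest priority. The engine is the triangularity of the $f_i$-action on the lower global basis recorded in Section 3, namely $f_i \cdot G^{\rm low} _\mu (b') \in \c^\times G^{\rm low} _\mu (\tilde{f}_i b') + \sum \c\, G^{\rm low} _\mu (b'')$ with $\varepsilon_i (b'') > \varepsilon_i (b') + 1$. Combining this with the tensor-product description of the Kashiwara embedding in Proposition \ref{p:polyhedral realizations} and the identities in Lemma \ref{l:Nakashima}, one shows by induction along the word ${\bf i}$ that peeling off $\exp(t_{N-j+1} f_{i_{N-j+1}})$ contributes to the top-degree monomial the power of $t_{N-j+1}$ equal to the relevant $\varphi_{i_{N-j+1}}$-value of the corresponding tensor factor, the remaining part being governed by $\pi_{\le N-j} (b)$; the $\c^\times$-coefficient above together with the string property (Proposition \ref{p:string property}) ensures that the coefficient of this top monomial survives nonzero through all $N$ steps. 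This gives that the $\prec$-top exponent of $p_b$ is $\Psi_{\bf i} (b)$. As a consistency check, taking $b = b_{w_0 (k\lambda)}$ reproduces $-\Psi_{\bf i} (b_{w_0 (k\lambda)}) = -(k x_1, \ldots, k x_N)$ in agreement with Theorem \ref{t:Nakashima}.

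Granting the leading-term formula, the three equalities follow formally. For fixed $k > 0$ the sections $\{\sigma_b \mid b \in \mathcal{B}(k\lambda)\}$ form a $\c$-basis of $H^0(G/B, \mathcal{L}_\lambda ^{\otimes k})$ whose $\prec$-leading exponents $\{\Psi_{\bf i} (b)\}$ are pairwise distinct (since $\Psi_{\bf i}$ is injective). By the standard fact that the image of a valuation on a finite-dimensional space of functions equipped with a basis adapted to $\prec$ is exactly the set of leading exponents of the basis vectors (see \cite[Sect.\ 1.2]{Kav} and \cite{KK2}), we obtain \[\{v_{{\bf i}, \prec} ^{\rm high} (\sigma / \tau_\lambda ^k) \mid \sigma \in H^0(G/B, \mathcal{L}_\lambda ^{\otimes k}) \setminus \{0\}\} = \{-\Psi_{\bf i} (b) \mid b \in \mathcal{B}(k\lambda)\}.\] Taking the union over $k > 0$, and recalling $\mathcal{S}_{\bf i} (\lambda) = \bigcup_{k > 0} \{(k, \Psi_{\bf i} (b)) \mid b \in \mathcal{B}(k\lambda)\}$ together with $\omega(k, {\bf a}) = (k, -{\bf a})$, we get $\mathcal{S}_{\bf i} (\lambda) = \omega(S(G/B, \mathcal{L}_\lambda, v_{{\bf i}, \prec} ^{\rm high}, \tau_\lambda))$. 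Since $\omega$ is an $\r$-linear automorphism of $\r \times \r^N$, it commutes with passing to the smallest closed cone containing a set and with intersecting with the affine hyperplane $\{1\} \times \r^N$; hence $\mathcal{C}_{\bf i} (\lambda) = \omega(C(G/B, \mathcal{L}_\lambda, v_{{\bf i}, \prec} ^{\rm high}, \tau_\lambda))$ and $\Delta_{\bf i} (\lambda) = -\Delta(G/B, \mathcal{L}_\lambda, v_{{\bf i}, \prec} ^{\rm high}, \tau_\lambda)$.

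The main obstacle is the leading-term computation of the second paragraph: one must verify that, after multiplying out all $N$ exponentials applied to a global basis vector, none of the many lower-order terms produced by the non-leading summands in $f_i \cdot G^{\rm low} _\mu (b')$ can create, with surviving nonzero coefficient, a monomial that is $\prec$-larger than $t_1 ^{a_1} \cdots t_N ^{a_N}$ with $(a_1, \ldots, a_N) = \Psi_{\bf i} (b)$, and that the coefficient of that very monomial does not vanish. This is exactly where the string and Demazure combinatorics of Section 3 (Propositions \ref{p:properties of Demazure} and \ref{p:string property}) and the recursive structure of the Kashiwara embedding are used; it is the technical core of \cite[Sect.\ 4]{FN}. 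Once it is established, the remainder is routine bookkeeping with the automorphism $\omega$ and the definitions of the cones $\mathcal{C}_{\bf i} (\lambda)$, $C(\cdots)$ and the polytopes $\Delta_{\bf i} (\lambda)$, $\Delta(\cdots)$.
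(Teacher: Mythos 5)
The paper does not actually prove this theorem: it is quoted from \cite[Sect.\ 4]{FN}, so there is no internal argument to compare against. Your outline is a faithful reconstruction of the strategy of that reference. The identification $H^0(G/B, \mathcal{L}_\lambda^{\otimes k}) \cong V(k\lambda)^\ast$ with the dual (upper) global basis, the choice $\tau_\lambda = \sigma_{b_\lambda}$ (which is indeed nowhere vanishing on the big cell and compatible with taking tensor powers), the reduction of $\sigma_b/\tau_\lambda^k$ to the polynomial $p_b$, and the formal passage from the single identity $v_{{\bf i}, \prec}^{\rm high}(\sigma_b/\tau_\lambda^k) = -\Psi_{\bf i}(b)$ to the three displayed equalities --- using that the leading exponents of an adapted basis are pairwise distinct and that $\omega$ is a linear automorphism fixing the first coordinate, hence commutes with taking closed cones and slicing at level $1$ --- are all correct and are essentially how \cite{FN} proceeds.

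The caveat is that the entire content of the theorem is concentrated in the leading-term identity, and your second paragraph only gestures at it. As you yourself flag, the issue is to show that the non-leading summands of $f_i \cdot G^{\rm low}_\mu(b')$ cannot, over the course of multiplying out all $N$ exponentials, either produce a $\prec$-larger monomial with surviving coefficient or cancel the coefficient of $t^{\Psi_{\bf i}(b)}$. This is not a routine bookkeeping induction: in \cite{FN} it is handled by first pinning down the top power of $t_N$ as $\varepsilon_{i_N}(b)$ via the compatibility of global bases with Demazure and opposite Demazure submodules (the span of $\exp(t_1 f_{i_1}) \cdots \exp(t_{N-1} f_{i_{N-1}}) f_{i_N}^{(a)} v_{k\lambda}$ is controlled by Demazure-type crystals, to which the string property of Proposition \ref{p:string property} applies), and then inducting on the length of the word, exactly mirroring the recursive construction of the Kashiwara embedding. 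So as a self-contained proof your proposal is incomplete at its core step; as a reconstruction of the cited proof it identifies the correct strategy, the correct normalizations, and the correct location of the difficulty, and everything surrounding that step is sound.
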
\vspace{2mm}

\begin{rem}\normalfont
The author and Oya \cite{FO} proved that $\Delta_{\bf i} (\lambda)$ is also identical to the Newton-Okounkov body of $G/B$ associated with a geometrically natural valuation, which is given by counting the orders of zeros along a specific sequence of Schubert subvarieties.
\end{rem}\vspace{2mm}

We say that $G/B$ admits a \emph{flat degeneration} to a variety $X$ if there exists a flat morphism 
\[\pi \colon \mathfrak{X} \rightarrow {\rm Spec}(\c[t])\] 
of schemes such that the scheme-theoretic fiber $\pi^{-1}(t)$ (resp., $\pi^{-1}(0)$) over a closed point $t \in \c \setminus \{0\}$ (resp., the origin $0 \in \c$) is isomorphic to $G/B$ (resp., $X$). By Theorem \ref{t:NOBY polyhedral realizations} and \cite[Theorem 1]{And} (see also \cite[Corollary 3.14]{HK}), there exists a flat degeneration of $G/B$ to ${\rm Proj} (\c[\mathcal{S}_{\bf i} (\lambda)])$, where the $\z_{>0}$-grading of $\mathcal{S}_{\bf i} (\lambda)$ induces a $\z_{\ge 0}$-grading of $\c[\mathcal{S}_{\bf i} (\lambda)]$. By Proposition \ref{p:convexity} (1) and \cite[Theorem 1.3.5]{CLS}, we see that ${\rm Proj} (\c[\mathcal{S}_{\bf i} (\lambda)])$ is normal; hence it is identical to the normal toric variety $X(\Delta_{\bf i} (\lambda))$ associated with the rational convex polytope $\Delta_{\bf i} (\lambda)$. Thus, we obtain the following.

\vspace{2mm}\begin{thm}\label{t:toric deg}
There exists a flat degeneration of $G/B$ to the normal toric variety $X(\Delta_{\bf i} (\lambda))$ associated with the Nakashima-Zelevinsky polytope $\Delta_{\bf i} (\lambda)$.
\end{thm}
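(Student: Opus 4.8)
The plan is to obtain the statement by assembling three ingredients that are already at our disposal: the Newton--Okounkov description of $\Delta_{\bf i}(\lambda)$ provided by Theorem \ref{t:NOBY polyhedral realizations}, Anderson's general construction of a toric degeneration out of a finitely generated value semigroup \cite[Theorem 1]{And}, and the normality criterion for affine semigroup algebras via saturation \cite[Theorem 1.3.5]{CLS}. In outline: transport the combinatorial data to the flag variety, check the finite-generation hypothesis of Anderson's theorem, read off the degeneration to $\mathrm{Proj}(\c[\mathcal{S}_{\bf i}(\lambda)])$, and finally recognize this $\mathrm{Proj}$ as the normal toric variety $X(\Delta_{\bf i}(\lambda))$.

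Concretely, first I would invoke Theorem \ref{t:NOBY polyhedral realizations} to fix a nonzero section $\tau_\lambda \in H^0(G/B, \mathcal{L}_\lambda)$ with $\mathcal{S}_{\bf i}(\lambda) = \omega(S(G/B, \mathcal{L}_\lambda, v_{{\bf i},\prec}^{\rm high}, \tau_\lambda))$; since $\omega$ is a lattice automorphism of $\z\times\z^N$, the value semigroup attached to $(G/B, \mathcal{L}_\lambda, v_{{\bf i},\prec}^{\rm high}, \tau_\lambda)$ is, up to $\omega$, exactly $\mathcal{S}_{\bf i}(\lambda)$, and its semigroup algebra is correspondingly $\c[\mathcal{S}_{\bf i}(\lambda)]$. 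The crucial finiteness input is Proposition \ref{p:convexity}(1): the cone $\mathcal{C}_{\bf i}(\lambda)$ is a rational convex polyhedral cone and $\mathcal{S}_{\bf i}(\lambda) = \mathcal{C}_{\bf i}(\lambda)\cap(\z_{>0}\times\z^N)$, so by Gordan's lemma $\mathcal{S}_{\bf i}(\lambda)$ is finitely generated. This is precisely what is needed to apply \cite[Theorem 1]{And} (see also \cite[Corollary 3.14]{HK}), which then yields a flat degeneration of $G/B$ to $\mathrm{Proj}(\c[\mathcal{S}_{\bf i}(\lambda)])$, the $\z_{\ge 0}$-grading on $\c[\mathcal{S}_{\bf i}(\lambda)]$ being induced by the $\z_{>0}$-grading on the first factor of $\mathcal{S}_{\bf i}(\lambda)$.

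It then remains to identify $\mathrm{Proj}(\c[\mathcal{S}_{\bf i}(\lambda)])$ with $X(\Delta_{\bf i}(\lambda))$. Because $\c[\mathcal{S}_{\bf i}(\lambda)]$ is a semigroup algebra graded by the first coordinate, its $\mathrm{Proj}$ is a projective toric variety whose associated polytope, with respect to the degree-one polarization, is the slice $\{{\bf a}\in\r^N \mid (1,{\bf a})\in\mathcal{C}_{\bf i}(\lambda)\}=\Delta_{\bf i}(\lambda)$; the saturation equality $\mathcal{S}_{\bf i}(\lambda)=\mathcal{C}_{\bf i}(\lambda)\cap(\z_{>0}\times\z^N)$ from Proposition \ref{p:convexity}(1), together with \cite[Theorem 1.3.5]{CLS}, forces this variety to be normal, hence equal to the normal toric variety built from the normal fan of $\Delta_{\bf i}(\lambda)$. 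Composing the two steps gives the asserted flat degeneration of $G/B$ to $X(\Delta_{\bf i}(\lambda))$. Since every nontrivial ingredient is already established, there is no real obstacle here; the only point demanding a little care is that one must check finite generation and saturation for the \emph{value semigroup} $\mathcal{S}_{\bf i}(\lambda)$ itself, not merely for the polytope $\Delta_{\bf i}(\lambda)$ — which is exactly what Proposition \ref{p:convexity}(1) delivers — and that the grading conventions be matched so that the $\mathrm{Proj}$ of the semigroup ring is the polarized toric variety of $\Delta_{\bf i}(\lambda)$.
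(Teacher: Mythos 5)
Your proposal is correct and follows essentially the same route as the paper: identify $\mathcal{S}_{\bf i}(\lambda)$ with the value semigroup via Theorem \ref{t:NOBY polyhedral realizations}, apply \cite[Theorem 1]{And} (with finite generation supplied by Proposition \ref{p:convexity}(1)) to degenerate $G/B$ to ${\rm Proj}(\c[\mathcal{S}_{\bf i}(\lambda)])$, and then use the saturation statement together with \cite[Theorem 1.3.5]{CLS} to identify this with the normal toric variety $X(\Delta_{\bf i}(\lambda))$. The only difference is that you make the appeal to Gordan's lemma explicit, which the paper leaves implicit.
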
\vspace{2mm}

We apply Alexeev-Brion's argument \cite{AB} to this flat degeneration. 

\vspace{2mm}\begin{defi}\normalfont
Let ${\bf i} \in I^N$ be a reduced word for $w_0$, and write $P_\r \coloneqq P \otimes_\z \r$. Define a subset $\mathcal{S}_{\bf i} \subset P_+ \times \z^N$ by \[\mathcal{S}_{\bf i} \coloneqq \bigcup_{\lambda \in P_+} \{(\lambda, \Psi_{\bf i}(b)) \mid b \in \mathcal{B}(\lambda)\},\] and denote by $\mathcal{C}_{\bf i} \subset P_\r \times \r^N$ the smallest real closed cone containing $\mathcal{S}_{\bf i}$. 
\end{defi}\vspace{2mm}

In a way similar to the proof of \cite[Corollaries 2.18 and 4.3]{FN}, we deduce the following.

\vspace{2mm}\begin{prop}
Let ${\bf i} \in I^N$ be a reduced word for $w_0$. Then, the real closed cone $\mathcal{C}_{\bf i}$ is a rational convex polyhedral cone, and the equality $\mathcal{S}_{\bf i} = \mathcal{C}_{\bf i} \cap (P_+ \times \z^N)$ holds.
\end{prop}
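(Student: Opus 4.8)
The plan is to mimic the proof of the analogous statement for $\mathcal{C}_{\bf i}(\lambda)$ in \cite{FN} (Corollaries 2.18 and 4.3 there), upgrading from a fixed $\lambda$ to the ``universal'' cone that fibers over $P_+$. First I would observe that $\mathcal{S}_{\bf i}$ is a subsemigroup of $P_+ \times \z^N$: this follows from the additivity of $\Psi_{\bf i}$, namely $\Psi_{\bf i}(\mathcal{B}(\lambda)) + \Psi_{\bf i}(\mathcal{B}(\mu)) \subset \Psi_{\bf i}(\mathcal{B}(\lambda+\mu))$ together with $\Psi_{\bf i}(b_{\lambda+\mu}) = \Psi_{\bf i}(b_\lambda) + \Psi_{\bf i}(b_\mu)$ (cf. \cite[Theorem 4.1]{FN}). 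Since $P_+$ is a finitely generated monoid (generated by the fundamental weights $\varpi_1, \ldots, \varpi_n$), it suffices to control the fibers over those generators.

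Next I would set up the Hilbert-basis/finite-generation argument. The key input is Proposition \ref{p:convexity}: for each $\lambda \in P_+$, the cone $\mathcal{C}_{\bf i}(\lambda)$ is a rational convex polyhedral cone with $\mathcal{S}_{\bf i}(\lambda) = \mathcal{C}_{\bf i}(\lambda) \cap (\z_{>0}\times\z^N)$. One checks that $\mathcal{C}_{\bf i}$ is, by construction, the closed convex cone generated by the finitely many cones $\mathcal{C}_{\bf i}(\varpi_j)$ (viewed inside $P_\r \times \r^N$ via the projection to the $j$-th weight coordinate), hence is itself a rational convex polyhedral cone; moreover $\mathcal{C}_{\bf i} \cap (\{\lambda\}\times\r^N) = \{\lambda\}\times \mathcal{C}_{\bf i}(\lambda)$ for $\lambda\in P_+$ up to the usual dilation normalization. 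Then I would prove the saturation statement $\mathcal{S}_{\bf i} = \mathcal{C}_{\bf i}\cap(P_+\times\z^N)$: the inclusion $\subseteq$ is immediate, and for $\supseteq$, given $(\lambda,{\bf a}) \in \mathcal{C}_{\bf i}\cap(P_+\times\z^N)$, restricting to the slice over $\lambda$ puts $(1,{\bf a})$ (after normalization) in $\mathcal{C}_{\bf i}(\lambda) \cap (\z_{>0}\times\z^N) = \mathcal{S}_{\bf i}(\lambda)$ by Proposition \ref{p:convexity}, so ${\bf a}\in\Psi_{\bf i}(\mathcal{B}(\lambda))$, i.e. $(\lambda,{\bf a})\in\mathcal{S}_{\bf i}$.

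The step I expect to be the main obstacle is verifying that the ``glued'' cone $\mathcal{C}_{\bf i}$ really is polyhedral and that its slices over dominant weights $\lambda \notin \z_{\ge 0}\{\varpi_j\}$ — i.e. over the interior of $P_+$ — behave compatibly with the individual $\Delta_{\bf i}(\lambda)$; one must rule out the cone ``overshooting'' on mixed slices. This is exactly where the additivity of $\Psi_{\bf i}$ is essential: it guarantees $\mathcal{C}_{\bf i}(\lambda+\mu) \supseteq \mathcal{C}_{\bf i}(\lambda) + \mathcal{C}_{\bf i}(\mu)$ as cones, while the reverse containment of lattice points would generally fail but is not needed — only the saturation $\mathcal{S}_{\bf i} = \mathcal{C}_{\bf i}\cap(P_+\times\z^N)$ is asserted, and that reduces, slice by slice, to Proposition \ref{p:convexity}. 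I would finish by noting that rationality and finite generation of $\mathcal{C}_{\bf i}$ follow formally once it is exhibited as the rational cone on finitely many rational polytopes $\Delta_{\bf i}(\varpi_j)$, invoking Gordan's lemma for the semigroup statement. No genuinely new technique beyond \cite{FN} is required; the content is the bookkeeping that assembles the one-parameter family of cones into a single polyhedral cone over $P_+$.
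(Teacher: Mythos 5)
Your overall strategy---make $\mathcal{S}_{\bf i}$ a semigroup via the additivity of $\Psi_{\bf i}$, and obtain the saturation $\mathcal{S}_{\bf i} = \mathcal{C}_{\bf i} \cap (P_+ \times \z^N)$ slice by slice from Proposition \ref{p:convexity}---is in the right spirit, and the saturation step does go through once one knows that the fiber of $\mathcal{C}_{\bf i}$ over $\lambda$ is exactly $\{\lambda\} \times \Delta_{\bf i}(\lambda)$. But your argument for polyhedrality contains a genuine gap: you assert that $\mathcal{C}_{\bf i}$ is ``by construction'' the closed convex cone generated by the finitely many cones $\mathcal{C}_{\bf i}(\varpi_j)$. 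That assertion is equivalent to the Minkowski decomposition $\Delta_{\bf i}\bigl(\sum_j m_j \varpi_j\bigr) = \sum_j m_j \Delta_{\bf i}(\varpi_j)$ for all $(m_j) \in \z_{\ge 0}^n$, i.e.\ to the triviality of the fan $\Sigma_{\bf i}$ of Section 6. This is false in general: additivity only gives $\sum_j m_j \Delta_{\bf i}(\varpi_j) \subset \Delta_{\bf i}\bigl(\sum_j m_j\varpi_j\bigr)$, and the reverse inclusion is precisely the content of Theorem \ref{t:corollary 1}, which requires the parapolytope hypothesis (and is why Proposition \ref{p:structure of fan} and Corollary \ref{c:geometric corollary1} carry hypotheses at all; cf.\ also the analogous failure for string polytopes in \cite{AB}). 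You in fact concede in your last paragraph that the reverse containment ``would generally fail,'' but that concession is exactly what invalidates the claimed generation of $\mathcal{C}_{\bf i}$ by the fibers over the fundamental weights: in general $\mathcal{C}_{\bf i}$ is strictly larger than that cone, so its polyhedrality cannot be deduced from the polyhedrality of the $\mathcal{C}_{\bf i}(\varpi_j)$ alone.

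The intended proof, following \cite[Corollaries 2.18 and 4.3]{FN}, does not reduce to the fundamental weights. There, polyhedrality and saturation for a single $\lambda$ are obtained from the crystal-theoretic description of $\Psi_{\bf i}(\mathcal{B}(\lambda))$ inside the polyhedral realization of $\mathcal{B}(\infty)$ (equivalently, from the identification of $\mathcal{S}_{\bf i}(\lambda)$ with the value semigroup of the section ring $\bigoplus_k H^0(G/B, \mathcal{L}_\lambda^{\otimes k})$ under the valuation $v_{{\bf i},\prec}^{\rm high}$, which has one-dimensional leaves); the multigraded statement is proved the same way with the $P_+$-graded ring $\bigoplus_{\lambda \in P_+} H^0(G/B, \mathcal{L}_\lambda)$ in place of the $\z_{\ge 0}$-graded one. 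To repair your write-up you would need to replace the ``generated by the $\mathcal{C}_{\bf i}(\varpi_j)$'' step by this argument; the rest (semigroup property, fiberwise saturation via Proposition \ref{p:convexity}) can then be kept essentially as you wrote it.
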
\vspace{2mm}

Let $\{\varpi_i \mid i \in I\} \subset P_+$ be the set of fundamental weights, and $P_{\r, +} \subset P_\r$ the closure of the fundamental Weyl chamber with respect to the Euclidean topology, that is, \[P_{\r, +} \coloneqq \sum_{i \in I} \r_{\ge 0} \varpi_i.\] Denote by $\pi_1 \colon P_\r \times \r^N \rightarrow P_\r$ the first projection, which maps the rational convex polyhedral cone $\mathcal{C}_{\bf i}$ onto $P_{\r, +}$. Then, for $\lambda \in P_+$, the Nakashima-Zelevinsky polytope $\Delta_{\bf i}(\lambda)$ is identical to the fiber $\mathcal{C}_{\bf i} \cap \pi_1 ^{-1} (\lambda)$. Imitating \cite[Definition 4.1]{AB}, we define a fan $\Sigma_{\bf i}$ from $\mathcal{C}_{\bf i}$. For $\lambda \in P_{\r, +}$, we set 
\begin{align*}
&F_\lambda \coloneqq \{{\rm faces}\ \tau\ {\rm of}\ \mathcal{C}_{\bf i} \mid \lambda \in \pi_1 (\tau^0)\},\ {\rm and}\\
&\sigma_\lambda ^0 \coloneqq \bigcap_{\tau \in F_\lambda} \pi_1 (\tau^0),
\end{align*}
where $\tau^0$ is the relative interior of $\tau$. Denote by $\sigma_\lambda$ the closure of $\sigma_\lambda ^0$ in $P_\r$ with respect to the Euclidean topology. Then, a fan $\Sigma_{\bf i}$ with support $P_{\r, +}$ is defined to be \[\Sigma_{\bf i} \coloneqq \{\sigma_\lambda \mid \lambda \in P_{\r, +}\};\] the fan $\Sigma_{\bf i}$ is said to be \emph{trivial} if it consists only of the faces of $P_{\r, +}$. Let $P_{++} \subset P_+$ denote the set of regular dominant integral weights. For $\lambda \in P_{++}$, the line bundle $\mathcal{L}_\lambda$ on $G/B$ is very ample (see, for instance, \cite[Sect.~I\hspace{-.1em}I.8.5]{Jan}); hence we see by \cite[Corollary 3.2]{KK2} that the real dimension of $\Delta_{\bf i}(\lambda)$ equals $N$. In a way similar to the argument in \cite{AB}, we obtain the following.

\vspace{2mm}\begin{prop}[{cf.\ \cite[Lemma 4.2 and Corollary 4.3]{AB}}]\label{p:structure of fan}
Let ${\bf i} \in I^N$ be a reduced word for $w_0$.
\begin{enumerate}
\item[{\rm (1)}] Two weights $\lambda, \mu \in P_+$ lie in the same cone of $\Sigma_{\bf i}$ if and only if $\Delta_{\bf i}(\lambda + \mu)$ is the Minkowski sum of $\Delta_{\bf i}(\lambda)$ and $\Delta_{\bf i}(\mu)$.
\item[{\rm (2)}] If the fan $\Sigma_{\bf i}$ is trivial, then the polytopes $\Delta_{\bf i}(\lambda)$, $\lambda \in P_{++}$, have the same normal fan. 
\end{enumerate}
\end{prop}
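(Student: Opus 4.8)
The plan is to recognize Proposition~\ref{p:structure of fan} as an instance of the general behaviour of the fibres of a linear projection of a pointed rational polyhedral cone, exactly along the lines of Alexeev--Brion's analysis \cite{AB} of the total cone of a spherical variety; here $\mathcal{C}_{\bf i}$ plays the role of that cone, $\pi_1$ the projection, and $\Sigma_{\bf i}$ the associated chamber fan. Throughout I identify $\Delta_{\bf i}(\lambda)$ with the fibre $\mathcal{C}_{\bf i}\cap\pi_1^{-1}(\lambda)\subset\r^N$. By the preceding proposition $\mathcal{C}_{\bf i}$ is a rational convex polyhedral cone, and it is pointed because every fibre $\Delta_{\bf i}(\lambda)$ is a bounded polytope by Proposition~\ref{p:convexity}. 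The inclusion that is free of charge is $\Delta_{\bf i}(\lambda)+\Delta_{\bf i}(\mu)\subseteq\Delta_{\bf i}(\lambda+\mu)$: it holds because $\mathcal{C}_{\bf i}$ is closed under addition, equivalently it is the additivity of $\Psi_{\bf i}$ (see \cite[Theorem~4.1]{FN}) used in Corollary~\ref{c:additivity of opposite}. So part~(1) is the question of when the reverse inclusion holds, and part~(2) will be deduced from part~(1) together with the internal structure of a single cone of $\Sigma_{\bf i}$.

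For the ``if'' direction of~(1) I would first record the combinatorics of a single fibre. The definition of $\Sigma_{\bf i}$ gives $F_\nu=\{\tau:\sigma_\nu^{0}\subseteq\pi_1(\tau^{0})\}$, so $F_\nu$ depends only on $\sigma_\nu$ and is therefore constant as $\nu$ ranges over the relative interior $\sigma^{0}$ of a fixed cone $\sigma$ of $\Sigma_{\bf i}$. The vertices of $\Delta_{\bf i}(\nu)$ are the points $v_\tau(\nu):=(\pi_1|_\tau)^{-1}(\nu)$ attached to the faces $\tau$ of $\mathcal{C}_{\bf i}$ with $\nu\in\pi_1(\tau^{0})$ on which $\pi_1$ is injective; each $v_\tau$ is an affine-linear function of $\nu$ on $\sigma$. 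Likewise the edge joining two adjacent vertices corresponds to a face $\rho$ with $\dim\rho=\dim\pi_1(\rho)+1$, and this edge has direction spanning the line $\ker(\pi_1|_\rho)$, which is independent of $\nu$. Hence the polytopes $\Delta_{\bf i}(\nu)$, $\nu\in\sigma^{0}$, have a common face lattice and common edge directions, so they share a single normal fan $\mathcal{N}$, and for each $u$ the support value $h_{\Delta_{\bf i}(\nu)}(u)=\langle u,v_{\tau_u}(\nu)\rangle$ (with $\tau_u$ the face selected by $u$) is linear in $\nu$; by continuity of the fibre in $\nu$ this persists on $\sigma=\overline{\sigma^{0}}$. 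Therefore $h_{\Delta_{\bf i}(\lambda+\mu)}=h_{\Delta_{\bf i}(\lambda)}+h_{\Delta_{\bf i}(\mu)}$ whenever $\lambda,\mu\in\sigma$ (note $\lambda+\mu\in\sigma$), which is exactly $\Delta_{\bf i}(\lambda+\mu)=\Delta_{\bf i}(\lambda)+\Delta_{\bf i}(\mu)$; if $\lambda,\mu$ lie only in a common non-maximal cone, one first enlarges it to a maximal cone of $\Sigma_{\bf i}$.

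For the ``only if'' direction of~(1), which I expect to be the main obstacle, suppose $\Delta_{\bf i}(\lambda+\mu)=\Delta_{\bf i}(\lambda)+\Delta_{\bf i}(\mu)$. For a vertex $v$ of $\Delta_{\bf i}(\lambda+\mu)$ its decomposition $v=v'+v''$ with $v'\in\Delta_{\bf i}(\lambda)$, $v''\in\Delta_{\bf i}(\mu)$ is unique; since the smallest face $\tau$ of the convex cone $\mathcal{C}_{\bf i}$ containing $(\lambda+\mu,v)$ is an extreme subset, homogeneity of $\mathcal{C}_{\bf i}$ forces $(\lambda,v')$ and $(\mu,v'')$ to lie in $\tau$. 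Tracking, over all vertices $v$, the faces of $\mathcal{C}_{\bf i}$ through the points $(\lambda,v')$ and $(\mu,v'')$, one concludes that $\lambda$ and $\mu$ both lie in the cone $\sigma_{\lambda+\mu}$ of $\Sigma_{\bf i}$, hence in a common cone. The delicate point is that a face $\tau$ can ``decompose through'' proper subfaces, so $(\lambda,v')$ need not lie in $\tau^{0}$; ruling out the resulting possible inconsistency is precisely the content of \cite[Lemma~4.2]{AB}, whose argument I would transcribe into the present notation. (This also yields the clean reformulation: $\lambda,\mu$ lie in a common cone of $\Sigma_{\bf i}$ if and only if $F_{\lambda+\mu}\subseteq F_\lambda\cap F_\mu$, which makes both implications of~(1) transparent.)

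Finally, for~(2): if $\Sigma_{\bf i}$ is trivial then $P_{\r,+}$ is itself a maximal cone of $\Sigma_{\bf i}$, and its (relative) interior contains $P_{++}$. One route is to quote directly the constancy of the normal fan on $\sigma^{0}$ established in the second paragraph (this is \cite[Corollary~4.3]{AB}). Alternatively, deduce it from~(1): for $\lambda,\mu\in P_{++}$ we have $\Delta_{\bf i}(\lambda+\mu)=\Delta_{\bf i}(\lambda)+\Delta_{\bf i}(\mu)$, so $\mathcal{N}(\Delta_{\bf i}(\lambda+\mu))$ is the common refinement of $\mathcal{N}(\Delta_{\bf i}(\lambda))$ and $\mathcal{N}(\Delta_{\bf i}(\mu))$; combining this with $\Delta_{\bf i}(m\lambda)=m\Delta_{\bf i}(\lambda)$ along the segment from $\lambda$ to $\mu$ inside the interior of $P_{\r,+}$ and using the upper semicontinuity of the normal fan as $\nu_t\to\lambda$, one obtains that $\mathcal{N}(\Delta_{\bf i}(\lambda))$ and $\mathcal{N}(\Delta_{\bf i}(\mu))$ refine each other and hence coincide. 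Here one uses, as noted just before the statement, that $\Delta_{\bf i}(\lambda)$ is $N$-dimensional for $\lambda\in P_{++}$, so that ``normal fan'' is a genuine complete fan in $(\r^N)^{\ast}$ and the assertion is meaningful.
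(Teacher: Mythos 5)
Your proposal is correct and follows essentially the same route as the paper: the paper gives no independent argument but states that the proposition is obtained ``in a way similar to the argument in [AB, Lemma 4.2 and Corollary 4.3]'', and your sketch is precisely a transcription of that argument into the present setting (the chamber fan of the projection $\pi_1\colon\mathcal{C}_{\bf i}\to P_{\r,+}$, linearity of the support functions of the fibres on each chamber for the ``if'' direction and for (2), and the vertex-decomposition argument for the ``only if'' direction). The one genuinely delicate step you defer verbatim to [AB, Lemma 4.2] is exactly the step the paper also defers, so nothing further is required.
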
\vspace{2mm}

The following is an immediate consequence of Theorem \ref{t:corollary 1} and Proposition \ref{p:structure of fan}.

\vspace{2mm}\begin{cor}\label{c:geometric corollary1}
If $\Delta_{\bf i}(\lambda)$ is a parapolytope for all $\lambda \in P_+$, then the toric varieties $X(\Delta_{\bf i}(\lambda))$, $\lambda \in P_{++}$, are all identical.
\end{cor}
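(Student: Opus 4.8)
The plan is to deduce Corollary~\ref{c:geometric corollary1} by combining Theorem~\ref{t:corollary 1} with Proposition~\ref{p:structure of fan}, so that the argument reduces to a short chain of implications about the fan $\Sigma_{\bf i}$ and the toric varieties $X(\Delta_{\bf i}(\lambda))$. First I would observe that if $\Delta_{\bf i}(\lambda)$ is a parapolytope for all $\lambda \in P_+$, then in particular $\Delta_{\bf i}(\lambda)$, $\Delta_{\bf i}(\mu)$, and $\Delta_{\bf i}(\lambda+\mu)$ are all parapolytopes for any $\lambda, \mu \in P_+$; hence Theorem~\ref{t:corollary 1} gives the Minkowski-sum additivity $\Delta_{\bf i}(\lambda+\mu) = \Delta_{\bf i}(\lambda) + \Delta_{\bf i}(\mu)$ for all $\lambda, \mu \in P_+$. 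By Proposition~\ref{p:structure of fan}~(1), this additivity for \emph{all} pairs $\lambda,\mu$ means that any two weights $\lambda,\mu \in P_+$ lie in a common cone of $\Sigma_{\bf i}$; since $P_+$ is Zariski-dense (in fact spans) the cone $P_{\r,+}$, and $\Sigma_{\bf i}$ is a fan with support $P_{\r,+}$, this forces $\Sigma_{\bf i}$ to have a single maximal cone, namely $P_{\r,+}$ itself — i.e.\ $\Sigma_{\bf i}$ is trivial.

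Next I would invoke Proposition~\ref{p:structure of fan}~(2): triviality of $\Sigma_{\bf i}$ implies that the polytopes $\Delta_{\bf i}(\lambda)$, $\lambda \in P_{++}$, all share the same normal fan. It is a standard fact from toric geometry (e.g.\ \cite[Theorem 1.3.5]{CLS}, already cited in the excerpt for the normality statement) that the normal toric variety $X(Q)$ associated with a rational polytope $Q$ depends only on the normal fan of $Q$, not on $Q$ itself. Therefore, all the $X(\Delta_{\bf i}(\lambda))$ with $\lambda \in P_{++}$ are isomorphic as toric varieties, which is the assertion of the corollary.

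One small technical point I would be careful about is the passage from ``additivity holds for all integral $\lambda,\mu \in P_+$'' to ``$\Sigma_{\bf i}$ is trivial.'' Proposition~\ref{p:structure of fan}~(1) is stated for $\lambda,\mu \in P_+$, so it directly gives that every pair of integral dominant weights lies in a common cone of $\Sigma_{\bf i}$. Since the cones of $\Sigma_{\bf i}$ are rational (they are built from faces of the rational polyhedral cone $\mathcal{C}_{\bf i}$ via the projection $\pi_1$), a cone of $\Sigma_{\bf i}$ is determined by the integral points it contains; hence if all integral points of $P_{\r,+}$ lie in one cone, that cone must be $P_{\r,+}$. This is the only place where a little care is needed, and it is the step I expect to be the mildest of obstacles — the rest is a direct citation chain. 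In fact, alternatively, one can bypass the fan entirely and argue that for $\lambda, \mu \in P_{++}$ the additivity $\Delta_{\bf i}(\lambda + \mu) = \Delta_{\bf i}(\lambda) + \Delta_{\bf i}(\mu)$ together with $\dim \Delta_{\bf i}(\lambda) = \dim \Delta_{\bf i}(\mu) = N$ implies, by the elementary fact that a full-dimensional summand of a polytope has a coarser normal fan and that two full-dimensional polytopes whose Minkowski sum has the normal fan of each summand must themselves share that normal fan, that $\Delta_{\bf i}(\lambda)$ and $\Delta_{\bf i}(\mu)$ have the same normal fan; then again $X(\Delta_{\bf i}(\lambda)) \cong X(\Delta_{\bf i}(\mu))$. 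I would present the first (fan-based) route as the main argument since the supporting propositions are already in place.
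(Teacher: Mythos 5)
Your argument is correct and is exactly the route the paper takes: the paper states that the corollary is an immediate consequence of Theorem \ref{t:corollary 1} and Proposition \ref{p:structure of fan}, i.e.\ additivity for all pairs forces $\Sigma_{\bf i}$ to be trivial, whence all $\Delta_{\bf i}(\lambda)$, $\lambda \in P_{++}$, share a normal fan and so determine the same normal toric variety. Your extra care about passing from ``every pair of integral dominant weights lies in a common cone'' to the triviality of the rational fan $\Sigma_{\bf i}$ is a detail the paper leaves implicit, and it is handled correctly.
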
\vspace{2mm}

We say that $X(\Delta_{\bf i}(\lambda))$ is \emph{Gorenstein Fano} if the anti-canonical class $-K_{X(\Delta_{\bf i}(\lambda))}$ is Cartier and ample (see \cite[Sect.\ 8.3]{CLS}). Let $\mathcal{O}(K_{G/B})$ denote the canonical bundle of $G/B$. By \cite[Proposition 2.2.7 (ii)]{Bri}, we have $\mathcal{O}(K_{G/B}) \simeq \mathcal{L}_{-2\rho}$, where $\rho \in P_{++}$ is the half sum of the positive roots. By the argument in the proof of \cite[Proposition 2.4]{AB} (see also \cite[Theorem 3.8]{AB}), the anti-canonical sheaf $\mathcal{O}(-K_{X(\Delta_{\bf i}(2\rho))})$ is the limit of $\mathcal{L}_{2\rho} \simeq \mathcal{O}(-K_{G/B})$ under the flat degeneration of $G/B$ to $X(\Delta_{\bf i}(2\rho))$ in Theorem \ref{t:toric deg}. Hence we obtain the following by Theorem \ref{t:main result} (1).

\vspace{2mm}\begin{cor}\label{c:geometric corollary2}
If $\Delta_{\bf i}(2 \rho)$ is a parapolytope, then the toric variety $X(\Delta_{\bf i}(2\rho))$ is Gorenstein Fano, that is, $\Delta_{\bf i}(2\rho)$ is reflexive.
\end{cor}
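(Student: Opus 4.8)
The plan is to run the Alexeev--Brion argument \cite{AB} through the toric degeneration of Theorem~\ref{t:toric deg}, using Theorem~\ref{t:main result}~(1) to upgrade the conclusion from $\q$-Gorenstein Fano to Gorenstein Fano. First I would observe that, since $\Delta_{\bf i}(2\rho)$ is assumed to be a parapolytope, Theorem~\ref{t:main result}~(1) shows it is a \emph{lattice} polytope; moreover $2\rho \in P_{++}$, so by \cite[Corollary 3.2]{KK2} (as recalled above) $\Delta_{\bf i}(2\rho)$ is full-dimensional in $\r^N$. Hence the divisor $D_{\Delta_{\bf i}(2\rho)}$ attached to the lattice polytope $\Delta_{\bf i}(2\rho)$ is a genuine torus-invariant Cartier divisor, and $\mathcal{O}_{X(\Delta_{\bf i}(2\rho))}(D_{\Delta_{\bf i}(2\rho)})$ is the ample line bundle $\mathcal{O}_{X(\Delta_{\bf i}(2\rho))}(1)$ coming from the presentation $X(\Delta_{\bf i}(2\rho)) = {\rm Proj}(\c[\mathcal{S}_{\bf i}(2\rho)])$ used in the proof of Theorem~\ref{t:toric deg}.

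Next I would identify $\mathcal{O}_{X(\Delta_{\bf i}(2\rho))}(1)$ with the flat limit of $\mathcal{L}_{2\rho}$. By Anderson's construction \cite{And} underlying Theorems~\ref{t:NOBY polyhedral realizations} and \ref{t:toric deg} (together with projective normality of $G/B$ with respect to $\mathcal{L}_{2\rho}$, which makes $\c[\mathcal{S}_{\bf i}(2\rho)]$ generated in degree one), the degeneration $\mathfrak{X} \to {\rm Spec}(\c[t])$ is projective and carries a relatively ample line bundle restricting to $\mathcal{L}_{2\rho}$ on the general fibre and to $\mathcal{O}_{X(\Delta_{\bf i}(2\rho))}(1)$ on the special fibre; in particular this limit is Cartier and ample. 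On the other hand $\mathcal{L}_{2\rho} \simeq \mathcal{O}(-K_{G/B})$ by \cite[Proposition 2.2.7 (ii)]{Bri}, and, as recalled in the paragraph preceding the corollary, the argument in the proof of \cite[Proposition 2.4]{AB} (see also \cite[Theorem 3.8]{AB}) identifies the flat limit of $\mathcal{O}(-K_{G/B})$ with $\mathcal{O}(-K_{X(\Delta_{\bf i}(2\rho))})$. Comparing the two descriptions yields $\mathcal{O}(-K_{X(\Delta_{\bf i}(2\rho))}) \simeq \mathcal{O}_{X(\Delta_{\bf i}(2\rho))}(1)$, so $-K_{X(\Delta_{\bf i}(2\rho))}$ is Cartier and ample and $X(\Delta_{\bf i}(2\rho))$ is Gorenstein Fano. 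To finish I would invoke the bijection between Gorenstein Fano toric varieties and reflexive polytopes (\cite[Theorem 8.3.4]{CLS}): the isomorphism above says $-K_{X(\Delta_{\bf i}(2\rho))}$ is linearly equivalent to $D_{\Delta_{\bf i}(2\rho)}$ as torus-invariant Cartier divisors, so the two polytopes differ by a lattice translation; since the $-K$-polytope of a Gorenstein Fano toric variety is reflexive, $\Delta_{\bf i}(2\rho)$ is reflexive.

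The step I expect to need the most care is the middle one: verifying that Anderson's relatively ample polarization restricts on the special fibre to exactly the polytope polarization $\mathcal{O}_{X(\Delta_{\bf i}(2\rho))}(1)$ and that this sheaf is genuinely invertible. This is precisely where Theorem~\ref{t:main result}~(1) is essential --- if $\Delta_{\bf i}(2\rho)$ were only a rational polytope one could conclude at best that $-K_{X(\Delta_{\bf i}(2\rho))}$ is $\q$-Cartier, and $X(\Delta_{\bf i}(2\rho))$ would be only $\q$-Gorenstein Fano, with $\Delta_{\bf i}(2\rho)$ not itself the reflexive $-K$-polytope. Everything else is either already established in the excerpt (the toric degeneration, full-dimensionality, and the identification of the anti-canonical limit) or a standard citation.
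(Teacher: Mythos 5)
Your proposal is correct and follows essentially the same route as the paper: the paper's proof consists precisely of the observation that $\mathcal{O}(-K_{X(\Delta_{\bf i}(2\rho))})$ is the flat limit of $\mathcal{L}_{2\rho} \simeq \mathcal{O}(-K_{G/B})$ under the degeneration of Theorem \ref{t:toric deg} (via \cite[Proposition 2.4]{AB} and \cite[Proposition 2.2.7 (ii)]{Bri}), combined with Theorem \ref{t:main result} (1) to guarantee that $\Delta_{\bf i}(2\rho)$ is a lattice polytope so that the limiting polarization is genuinely Cartier. Your write-up simply makes explicit the details (full-dimensionality, the identification with $\mathcal{O}_{X(\Delta_{\bf i}(2\rho))}(1)$, and the reflexivity dictionary) that the paper leaves to the reader.
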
\vspace{2mm}

By Corollaries \ref{c:geometric corollary1} and \ref{c:geometric corollary2}, we obtain Corollary 4 in Introduction.

\vspace{4mm}


\begin{thebibliography}{99}
\bibitem{AB}
V. Alexeev and M. Brion, Toric degenerations of spherical varieties, Selecta Math.\ (N.S.) {\bf 10} (2004), 453--478.
\bibitem{And}
D. Anderson, Okounkov bodies and toric degenerations, Math.\ Ann.\ {\bf 356} (2013), 1183--1202.
\bibitem{Bri}
M. Brion, Lectures on the geometry of flag varieties, in Topics in Cohomological Studies of Algebraic Varieties, Trends Math., Birkh\"{a}user, Basel, 2005, 33--85.
\bibitem{Cal}
P. Caldero, Toric degenerations of Schubert varieties, Transform.\ Groups {\bf 7} (2002), 51--60. 
\bibitem{Chi}
R. Chiriv\`{i}, LS algebras and application to Schubert varieties, Transform.\ Groups {\bf 5} (2000), 245--264.
\bibitem{CLS}
D. Cox, J. Little, and H. Schenck, Toric Varieties, Graduate Studies in Mathematics Vol.~124, Amer.\ Math.\ Soc., Providence, RI, 2011.
\bibitem{FaFL}
X. Fang, G. Fourier, and P. Littelmann, Essential bases and toric degenerations arising from birational sequences, Adv.~Math.~{\bf 312} (2017), 107--149.
\bibitem{FaFL2}
X. Fang, G. Fourier, and P. Littelmann, On toric degenerations of flag varieties, in Representation Theory---current trends and perspectives, EMS Ser.\ Congr.\ Rep., Eur.\ Math.\ Soc., Z\"{u}rich, 2017, 187--232.
\bibitem{FeFL}
E. Feigin, G. Fourier, and P. Littelmann, Favourable modules: filtrations, polytopes, Newton-Okounkov bodies and flat degenerations, Transform.\ Groups {\bf 22} (2017), 321--352.
\bibitem{FN}
N. Fujita and S. Naito, Newton-Okounkov convex bodies of Schubert varieties and polyhedral realizations of crystal bases, Math.\ Z.\ {\bf 285} (2017), 325--352.
\bibitem{FO}
N. Fujita and H. Oya, A comparison of Newton-Okounkov polytopes of Schubert varieties, J.~Lond.~Math.~Soc.~(2) {\bf 96} (2017), 201--227.
\bibitem{GZ}
I. M. Gelfand and M. L. Zetlin, Finite-dimensional representations of the group of unimodular matrices, Doklady Akad.\ Nauk SSSR (N.S.) {\bf 71} (1950), 825--828. 
\bibitem{GL}
N. Gonciulea and V. Lakshmibai, Degenerations of flag and Schubert varieties to toric varieties, Transform.\ Groups {\bf 1} (1996), 215--248. 
\bibitem{GK}
M. Grossberg and Y. Karshon, Bott towers, complete integrability, and the extended character of representations, Duke Math.\ J.\ {\bf 76} (1994), 23--58.
\bibitem{HK}
M. Harada and K. Kaveh, Integrable systems, toric degenerations, and Okounkov bodies, Invent.\ Math.\ {\bf 202} (2015), 927--985.
\bibitem{Hos}
A. Hoshino, Polyhedral realizations of crystal bases for quantum algebras of finite types, J. Math.\ Phys.\ {\bf 46} (2005), 113514. 
\bibitem{Jan}
J. C. Jantzen, Representations of Algebraic Groups, 2nd ed., Math.\ Surveys Monographs Vol.\ 107, Amer.\ Math.\ Soc., Providence, RI, 2003.
\bibitem{Kas1}
M. Kashiwara, Crystallizing the $q$-analogue of universal enveloping algebras, Comm.\ Math.\ Phys.\ {\bf 133} (1990), 249--260.
\bibitem{Kas2}
M. Kashiwara, On crystal bases of the $q$-analogue of universal enveloping algebras, Duke Math.\ J. {\bf 63} (1991), 465--516.
\bibitem{Kas3}
M. Kashiwara, Global crystal bases of quantum groups, Duke Math.\ J. {\bf 69} (1993), 455--485.
\bibitem{Kas4}
M. Kashiwara, The crystal base and Littelmann's refined Demazure character formula, Duke Math.\ J. {\bf 71} (1993), 839--858.
\bibitem{Kas5}
M. Kashiwara, On crystal bases, in Representations of Groups ({B}anff, {AB}, 1994), CMS Conf.\ Proc.\ Vol.~16, Amer.\ Math.\ Soc., Providence, RI, 1995, 155--197.
\bibitem{Kav}
K. Kaveh, Crystal bases and Newton-Okounkov bodies, Duke Math.\ J. {\bf 164} (2015), 2461--2506.
\bibitem{KK1}
K. Kaveh and A. G. Khovanskii, Convex bodies and algebraic equations on affine varieties, preprint 2008, arXiv:0804.4095v1; a short version with title {\it Algebraic equations and convex bodies} appeared in Perspectives in Analysis, Geometry, and Topology, Progr.\ Math.~Vol.~296, Birkh\"{a}user/Springer, New York, 2012, 263--282.
\bibitem{KK2}
K. Kaveh and A. G. Khovanskii, Newton-Okounkov bodies, semigroups of integral points, graded algebras and intersection theory, Ann.\ of Math.\ {\bf 176} (2012), 925--978.
\bibitem{Kir1}
V. Kiritchenko, Divided difference operators on convex polytopes, in Schubert Calculus---Osaka 2012, Adv.\ Stud.\ Pure Math.\ Vol.\ 66, Math.\ Soc.\ Japan, Tokyo, 2016, 161--184. 
\bibitem{Kir}
V. Kiritchenko, Newton-Okounkov polytopes of flag varieties, Transform.\ Groups {\bf 22} (2017), 387--402.
\bibitem{Kir3}
V. Kiritchenko, Newton-Okounkov polytopes of Bott-Samelson varieties as Minkowski sums, preprint 2018, arXiv:1801.00334v1.
\bibitem{KST}
V. Kiritchenko, E. Smirnov, and V. Timorin, Schubert calculus and Gelfand-Tsetlin polytopes, Russian Math.\ Surveys {\bf 67} (2012), 685--719.
\bibitem{LM}
R. Lazarsfeld and M. Mustata, Convex bodies associated to linear series, Ann.\ Sci.\ de I'ENS\ {\bf 42} (2009), 783--835.
\bibitem{Lus_can}
G. Lusztig, Canonical bases arising from quantized enveloping algebras, J.\ Amer.\ Math.\ Soc.\ {\bf 3} (1990), 447--498.
\bibitem{Lus_quivers}
G. Lusztig, Quivers, perverse sheaves, and quantized enveloping algebras, J.\ Amer.\ Math.\ Soc.\ {\bf 4} (1991), 365--421.
\bibitem{Lus1}
G. Lusztig, Introduction to Quantum Groups, reprint of the 1994 edition, Modern Birkh\"{a}user Classics, Birkh\"{a}user/Springer, New York, 2010.
\bibitem{Nak1}
T. Nakashima, Polyhedral realizations of crystal bases for integrable highest weight modules, J.\ Algebra {\bf 219} (1999), 571--597.
\bibitem{Nak2}
T. Nakashima, Polytopes for crystallized Demazure modules and extremal vectors, Comm.\ Algebra {\bf 30} (2002), 1349--1367.
\bibitem{Nak3}
T. Nakashima, Decorated geometric crystals, polyhedral and monomial realizations of crystal bases, in Recent Developments in Algebraic and Combinatorial Aspects of Representation Theory,  Contemp.\ Math.\ Vol.\ 602, Amer.\ Math.\ Soc., Providence, RI, 2013, 143--163.
\bibitem{NZ}
T. Nakashima and A. Zelevinsky, Polyhedral realizations of crystal bases for quantized
Kac-Moody algebras, Adv.\ Math.\ {\bf 131} (1997), 253--278.
\bibitem{Oko1}
A. Okounkov, Brunn-Minkowski inequality for multiplicities, Invent.\ Math.\ {\bf 125} (1996), 405--411.
\bibitem{Oko2}
A. Okounkov, Multiplicities and Newton polytopes, in Kirillov's Seminar on Representation Theory, Amer.\ Math.\ Soc.\ Transl.\ Ser.\ 2 Vol.\ 181, Adv.\ Math.\ Sci.\ Vol.\ 35, Amer.\ Math.\ Soc., Providence, RI, 1998, 231--244.
\bibitem{Oko3}
A. Okounkov, Why would multiplicities be log-concave?, in The Orbit Method in Geometry and Physics, Progr.\ Math.~Vol.~213, Birkh\"{a}user Boston, Boston, 2003, 329--347.
\end{thebibliography}
\end{document}